\documentclass[11pt,twoside]{amsart}
\usepackage{amsmath, amsthm, amscd, amsfonts, amssymb, graphicx, color}
\usepackage[bookmarksnumbered, colorlinks, plainpages]{hyperref}

\newtheorem{theorem}{Theorem}[section]
\newtheorem{lemma}[theorem]{Lemma}
\newtheorem{proposition}[theorem]{Proposition}
\newtheorem{corollary}[theorem]{Corollary}

\theoremstyle{definition}

\newtheorem{example}[theorem]{Example}

\newtheorem{remark}[theorem]{Remark}

\numberwithin{equation}{section}



\def\<{\langle}
\def\>{\rangle}

\long\def\alert#1{\smallskip{\hskip\parindent\vrule%
\vbox{\advance\hsize-2\parindent\hrule\smallskip\parindent.4\parindent%
\narrower\noindent#1\smallskip\hrule}\vrule\hfill}\smallskip}

\pagestyle{plain}

\begin{document}

\baselineskip 17 pt

\title[\ci\ ON WEAKLY $n$-ABSORBING IDEALS OF COMMUTATIVE RINGS]{ON WEAKLY $n$-ABSORBING IDEALS OF COMMUTATIVE RINGS}

\author{Hojjat Mostafanasab}
\address{Department of Mathematics and Applications, University of Mohaghegh Ardabili, P. O. Box 179, Ardabil, Iran} \email{h.mostafanasab@uma.ac.ir, ~h.mostafanasab@gmail.com}

\author{Fatemeh Soheilnia}
\address{Department of Mathematics, South Tehran Branch, Islamic Azad University, Tehran, Iran 
} \email{soheilnia@gmail.com}

\author{Ahmad Yousefian Darani$^{*}$}
\address{Department of Mathematics and Applications, University of Mohaghegh Ardabili, P. O. Box 179, Ardabil, Iran} 
\email{yousefian@uma.ac.ir}

\thanks{{\scriptsize
\hskip -0.4 true cm 2010 {\it Mathematics Subject Classification}.
Primary 13A15; Secondary 13F05, 13G05.
\newline Keywords: prime ideals; $2$-absorbing ideals; $n$-absorbing ideals; weakly $n$-absorbing ideals.}}
\thanks{$^*$Corresponding author\\
This paper has been accepted in ``Anal. Stii. Ale Univ. Al.I. Cuza Iasi Secti. Matem." on 2 Sep 2015.}

\begin{abstract}
All rings are commutative with $1\neq0$. The purpose of this paper is to investigate the concept of weakly $n$-absorbing 
ideals generalizing weakly $2$-absorbing ideals. We prove that over a $u$-ring $R$ the Anderson-Badawi's conjectures about
$n$-absorbing ideals and the Badawi-Yousefian's question about weakly $2$-absorbing ideals hold.
\end{abstract}

\maketitle

\section{Introduction}
Throughout this paper all rings are commutative with a nonzero identity. 
Recall from \cite{AS} that a proper ideal $I$ of a commutative ring $R$ 
is said to be a {\it weakly prime ideal of} $R$ if whenever $a,b\in R$ and 
$0\neq ab\in I$, then either $a\in I$ or $b\in I$.
Badawi in \cite{B} generalized the concept of prime ideals in a different
way. He defined a nonzero proper ideal $I$ of $R$ to be a {\it 2-absorbing
ideal} of $R$ if whenever $a, b, c\in R$ and $abc\in I$, then $ab\in I$ or $%
ac\in I$ or $bc\in I$. Anderson and Badawi 
\cite{AB1} generalized the concept of $2$-absorbing ideals to $n$-absorbing
ideals. According to their definition, a proper ideal $I$ of $R$ is called
an $n$-{\it absorbing} (resp. {\it strongly $n$-absorbing}) ideal if whenever $%
a_1\cdots a_{n+1}\in I$ for $a_1,\dots,a_{n+1}\in R$ (resp. $I_1\cdots
I_{n+1}\subseteq I$ for ideals $I_1,\dots, I_{n+1}$ of $R$), then there are 
$n$ of the $a_i$'s (resp. $n$ of the $I_i$'s) whose product is in $I$. Thus a strongly 1-absorbing ideal is just a prime ideal. Clearly a strongly $n$-absorbing ideal of $R$ is also an $n$-absorbing ideal of $R$. Anderson and Badawi conjectured that these
two concepts are equivalent, e.g., they proved that an ideal $I$ of a Pr\"{u}fer domain $R$ is strongly $n$-absorbing if and only if $I$ is an $n$-absorbing ideal of $R$, \cite[Corollary 6.9]{AB1}. They also gave several results relating strongly $n$-absorbing ideals.
The concept $2$-absorbing ideals has another generalization, called weakly $2$-absorbing
ideals, which has studied in \cite{YB}. A proper ideal $I$ of $R$ to be a
{\it weakly 2-absorbing ideal of} $R$ if whenever $a, b, c\in R$ and $0\neq
abc\in I$, then $ab\in I$ or $ac\in I$ or $bc\in I$. Generally, we say that a proper ideal $I$ of $R$ is called
a {\it weakly $n$-absorbing} (resp. {\it strongly weakly $n$-absorbing}) ideal if whenever $
0\neq a_1\cdots a_{n+1}\in I$ for $a_1,\dots,a_{n+1}\in R$ (resp. $0\neq I_1\cdots
I_{n+1}\subseteq I$ for ideals $I_1,\dots,I_{n+1}$ of $R$), then there are 
$n$ of the $a_i$'s (resp. $n$ of the $I_i$'s) whose product is in $I$. Clearly a strongly weakly $n$-absorbing ideal of $R$ is also a weakly $n$-absorbing ideal of $R$. In \cite{Q}, Quartararo et al. said that a commutative ring $R$ is a $u$-{\it ring} provided
$R$ has the property that an ideal contained in a finite union of ideals
must be contained in one of those ideals. They show that every B$\acute{\rm e}$zout ring is a $u$-ring. Moreover, they proved that 
every Pr\"{u}fer domain is a $u$-domain.

In section 2, we give some basic properties of weakly $n$-absorbing ideals. For example, we show that 
$I$ is a weakly $n$-absorbing ideal of an integral domain $R$ if and only if
$\langle I,X\rangle$ is a weakly $n$-absorbing ideal of $R[X]$. If $I$ is a secondary ideal of a ring $R$ and $J$ is a weakly $n$-absorbing ideal of $R$, then $I\cap J$ is secondary. Let $I$ be a weakly $n$-absorbing ideal of $R$ that is not an $n$-absorbing
ideal. Then $\sqrt{I}={\rm Nil}(R)$, and also if $w\in {\rm Nil}(R)$, then either $w^{n}\in I$ or $w^{n-i}I^{i+1}=\{0\}$ for every $0\leq i\leq n-1$.

In section 3, we prove that if $R$ is a ring and $n$ is a positive integer such that every proper ideal of
$R$ is a weakly $n$-absorbing ideal of $R$, then ${\rm dim}(R)=0$, $R$ has at most $n+1$ prime ideals that are pairwise comaximal,
and $Jac(R)^{n+1}=0$. Let $(R_1, M_1),\dots,(R_s, M_s)$ be quasi-local commutative rings and let $R= R_1\times\cdots\times R_s$. If every proper ideal of $R$ is a weakly $n$-absorbing ideal of $R$, then $M_1^n =M_2^n=\cdots= M_s^n = \{0\}$. Moreover we show that
every proper ideal of a decomposable commutative ring $R = R_1\times R_2\times \cdots \times R_{n+1}$ is a weakly $n$-absorbing ideal of $R$ if and only if all of $R_i$'s  are fields.

In section 4, we investigate the following conjectures of Anderson and Badawi \cite{AB1}:\\
$\mathbf{Conjecture~1.}$
{Let $n$ be a positive integer. Then a proper ideal $I$ of a ring $R$ is
a strongly $n$-absorbing ideal of $R$ if and only $I$ is an $n$-absorbing ideal of $R$.}\\
$\mathbf{Conjecture~2.}$
{Let $n$ be a positive integer, and let $I$ be an $n$-absorbing ideal of a
ring $R$. Then $(\sqrt{I})^{n}\subseteq I$.}\\
In \cite{AB1}, they proved that Conjecture 1 implies Conjecture 2. Also, they show that if $R$ is a B$\acute{\rm e}$zout ring and $I$ is an $n$-absorbing ideal of $R$ such that $\sqrt{I}$ is a prime ideal of $R$, then $(\sqrt{I})^{n}\subseteq I$.
For $n=2$, Badawi \cite{B} shows that these two conjectures hold. In the case where $R$ is a $u$-ring, we show that these conjectures hold.
In \cite{YB}, Badawi and Yousefian offered a question as follows:\\
$\mathbf{Question.}$
{Let $I$ be a weakly $2$-absorbing ideal of $R$. Is $I$ a strongly weakly $2$-absorbing ideal of $R$?}\\
Regarding this question we will prove that for an arbitrary positive integer $n$, a weakly $n$-absorbing ideal $I$ of a $u$-ring $R$ is a
strongly weakly $n$-absorbing ideal of $R$.

\section{Properties of weakly $n$-absorbing ideals}
Let $n$ be a positive integer. It is obvious that any $n$-absorbing ideal of a ring $R$ is a weakly $n$-absorbing ideal of $R$, also the zero ideal is a weakly $n$-absorbing ideal of $R$, by definition. Therefore $I=\{0\}$ is a weakly $n$-absorbing ideal of the ring $\mathbb{Z}_{2^{n+1}}$, but it is easy to see that $I$ is not an $n$-absorbing ideal of $\mathbb{Z}_{2^{n+1}}$.\\
Consider elements $a_1,\dots,a_n$ and ideals $I_{1},\dots,I_n$ of a ring $R$. Throughout this paper we use the following notations: \\
$a_{1}\cdots \widehat{a_{i}}\cdots a_{n}$: $i$-${th}$ term is excluded from $a_{1}\cdots a_{n}$.\\
Similarly; $I_{1}\cdots \widehat{I_{i}}\cdots I_{n}$: $i$-${th}$ term is excluded from $I_{1}\cdots I_{n}$.\\
Moreover, ${\rm Nil}(R)$ denotes the ideal of nilpotent elements of $R$. 
\begin{theorem}\label{intersection}
Let $R$ be a ring and let $m$ and $n$ be positive integers.
\begin{enumerate}
\item A proper ideal $I$ of $R$ is a weakly $n$-absorbing ideal if and only if whenever $0\neq x_{1}\cdots x_{m}\in I$
for $x_{1},\dots,x_{m}\in R$ with $m>n$, then there are $n$ of the $x_{i}$'s whose product is in $I$.
\item  If $I$ is a weakly $n$-absorbing ideal of $R$, then $I$ is a weakly $m$-absorbing ideal of $R$ for all $m\geq n$.
\item If $I_i$ is a weakly $n_i$-absorbing ideal of $R$ for each $1\leq i\leq k$, then $I_1\cap \cdots \cap I_k$ is a weakly $n$-absorbing ideal of $R$ for $n = n_1+\cdots +n_k$. In particular, if $P_1,\dots,P_n$ are weakly prime ideals of $R$, then  $P_1\cap \cdots \cap P_n$ is a weakly $n$-absorbing ideal of $R$.
\item If $P_1,\dots,P_n$ are weakly prime ideals of $R$ that are pairwise comaximal ideals, then $I = P_1\cdots  P_n$ is a weakly $n$-absorbing ideal of $R$.
\end{enumerate}
\end{theorem}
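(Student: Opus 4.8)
The plan is to prove (1) by induction on $m$, and then to deduce (2), (3) and (4) from it by elementary manipulations of index sets.

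For (1), the ``if'' direction is just the case $m=n+1$. For the ``only if'' direction, assume $I$ is weakly $n$-absorbing and induct on $m\ge n+1$, the base case $m=n+1$ being the definition. Suppose $0\neq x_1\cdots x_m\in I$ with $m\ge n+2$, and put $b=x_{n+1}x_{n+2}\cdots x_m$. Applying the defining property of weakly $n$-absorbing ideals to the $n+1$ elements $x_1,\dots,x_n,b$ (whose product is $x_1\cdots x_m\neq 0$ and lies in $I$), we get that either $x_1\cdots x_n\in I$, and we are done, or $x_1\cdots\widehat{x_j}\cdots x_n\,b\in I$ for some $1\le j\le n$. In the second case this element equals $x_1\cdots\widehat{x_j}\cdots x_m$, a product of the $m-1$ elements $\{x_i:i\neq j\}$; it is nonzero, since otherwise $x_1\cdots x_m$ would vanish, and $m-1\ge n+1$, so the induction hypothesis gives $n$ of the $x_i$ whose product lies in $I$.

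Part (2): given $0\neq x_1\cdots x_{m+1}\in I$ with $m\ge n$, part (1) produces $n$ of the factors with product in $I$; adjoining $m-n$ of the remaining factors keeps the product in $I$ (as $I$ is an ideal), so $I$ is weakly $m$-absorbing. Part (3): write $n=n_1+\cdots+n_k$ and take $0\neq x_1\cdots x_{n+1}\in I_1\cap\cdots\cap I_k$; since $n+1>n_i$, part (1) applied to $I_i$ yields a set $S_i$ of $n_i$ indices with $\prod_{j\in S_i}x_j\in I_i$. With $S:=S_1\cup\cdots\cup S_k$ we have $|S|\le n$, and $S_i\subseteq S$ forces $\prod_{j\in S}x_j\in I_i$ for every $i$, hence $\prod_{j\in S}x_j\in I_1\cap\cdots\cap I_k$; enlarging $S$ to exactly $n$ indices (and noting $I_1\cap\cdots\cap I_k\subseteq I_1$ is proper) completes the proof. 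The ``in particular'' clause is the case $n_1=\cdots=n_k=1$, $k=n$, because a weakly prime ideal is precisely a weakly $1$-absorbing ideal.

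Part (4): I would first recall the standard fact that $P_1,\dots,P_n$ pairwise comaximal implies $P_1\cdots P_n=P_1\cap\cdots\cap P_n$ (induct on $n$, using that $A+C=B+C=R$ gives $AB+C=R$), and then apply (3): each $P_i$ is a proper weakly $1$-absorbing ideal, so $P_1\cap\cdots\cap P_n$, and thus $I=P_1\cdots P_n$, is weakly $n$-absorbing. I expect the only real obstacle to be the inductive step of (1) — specifically verifying that after one use of the defining property the surviving subproduct is again a nonzero product of more than $n$ of the original elements — since (2), (3) and (4) are then essentially formal.
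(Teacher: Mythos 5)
Your proof is correct and follows essentially the same route as the paper: the paper dismisses (1) and (2) as routine (your induction on $m$ is the standard argument), proves (3) by extracting for each $I_i$ a set of $n_i$ indices whose product lies in $I_i$, and obtains (4) as a direct consequence of (3) via comaximality (product $=$ intersection). Your write-up is in fact slightly more careful than the paper's, since you take the union of the index sets and pad it to exactly $n$ indices, a point the paper's displayed product glosses over when the chosen index sets overlap.
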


\begin{proof}
The proof of (1) and (2) are routine, so it is left out.\\
(3) Let $a_{1},\dots,a_{n+1}\in R$ such that
$0\neq a_{1}\cdots a_{n+1}\in I_{1}\cap\cdots\cap I_{k}$.
Since $I_{i}$'s are weakly $n_{i}$-absorbing, then, for each $1\leq i\leq k$, there exist integers $1\leq
j_{1},j_{2},\dots,j_{n_{i}}\leq n+1$ such that
$a_{j_{1}}a_{j_{2}}\cdots a_{j_{n_{i}}}\in I_{i}$. So
we have $a_{1_{1}}a_{1_{2}}\cdots a_{1_{n_{1}}}a_{2_{1}}a_{2_{2}}\cdots
a_{2_{n_{2}}}\cdots a_{n_{1}}a_{n_{2}}\cdots a_{n_{n_{k}}}\in I_{1}\cap\cdots\cap I_{k}$
which implies that $I_{1}\cap\cdots\cap I_{k}$ is weakly $n$-absorbing.\\
(4) is a direct consequence of (3).
\end{proof}

\begin{proposition}
Let $I$ be a proper ideal of a ring $R$. Then the following conditions are equivalent:
\begin{enumerate}
\item $I$ is strongly weakly $n$-absorbing;
\item For any ideals $I_{1},\dots,I_{n+1}$ of $R$ such that
$I\subseteq I_{1}$, $0\neq I_{1}\cdots I_{n+1}\subseteq I$ implies that
there are $n$ of $I_{i}$'s whose product is in $I$.
\end{enumerate}
\end{proposition}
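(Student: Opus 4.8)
The implication $(1)\Rightarrow(2)$ is immediate, since $(2)$ merely asks for the defining conclusion of strong weak $n$-absorption under the extra hypothesis $I\subseteq I_1$. So the content is $(2)\Rightarrow(1)$, and the plan is to reduce an arbitrary configuration of ideals to the special one handled by $(2)$ by enlarging the first ideal so that it contains $I$.

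Concretely, assume $(2)$ holds and let $I_1,\dots,I_{n+1}$ be ideals of $R$ with $0\neq I_1\cdots I_{n+1}\subseteq I$. Set $J_1:=I_1+I$. First I would verify that the hypotheses of $(2)$ are met by the tuple $J_1,I_2,\dots,I_{n+1}$: clearly $I\subseteq J_1$; expanding $J_1I_2\cdots I_{n+1}=(I_1+I)I_2\cdots I_{n+1}=I_1I_2\cdots I_{n+1}+I\,I_2\cdots I_{n+1}$ and using both $I_1\cdots I_{n+1}\subseteq I$ and $I\,I_2\cdots I_{n+1}\subseteq I$ (the latter since $I$ is an ideal) gives $J_1I_2\cdots I_{n+1}\subseteq I$; and $J_1I_2\cdots I_{n+1}\supseteq I_1I_2\cdots I_{n+1}\neq 0$, so the product is nonzero.

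Now apply $(2)$ to $J_1,I_2,\dots,I_{n+1}$ to obtain $n$ of these ideals whose product lies in $I$. If $J_1$ is not among the chosen $n$, the chosen ideals are exactly $I_2,\dots,I_{n+1}$, and $I_2\cdots I_{n+1}\subseteq I$ already exhibits $n$ of the original $I_i$ with product in $I$. If $J_1$ is among them, the product has the form $J_1\prod_{i\in S}I_i\subseteq I$ for some $S\subseteq\{2,\dots,n+1\}$ with $|S|=n-1$; since $I_1\subseteq J_1$ we get $I_1\prod_{i\in S}I_i\subseteq J_1\prod_{i\in S}I_i\subseteq I$, and $\{1\}\cup S$ again indexes $n$ of the original ideals whose product is in $I$. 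Either way $I$ is strongly weakly $n$-absorbing.

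There is essentially no hard step here; the only point to notice is that replacing $I_1$ by $I_1+I$ leaves all hypotheses intact — the product stays inside $I$ because $I$ absorbs multiplication, and stays nonzero because it only grows — which is exactly what allows one to assume $I\subseteq I_1$ without loss of generality. This is the weak-setting analogue of the corresponding reduction for strongly $n$-absorbing ideals.
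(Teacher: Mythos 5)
Your proof is correct and follows essentially the same route as the paper: replace the first ideal by its sum with $I$ (which keeps the product inside $I$ and nonzero), apply condition (2), and use $I_1\subseteq I_1+I$ to pass the conclusion back to the original ideals. No issues.
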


\begin{proof}
$(1)\Rightarrow(2)$ is clear.\\
$(2)\Rightarrow(1)$ Let $J,I_{2},\dots,I_{n+1}$ be ideals of $R$ such that
$0\neq JI_{2}\cdots I_{n+1}\subseteq I$. Then we
have that $0\neq(J+I)I_{2}\cdots I_{n+1}=(JI_{2}\cdots I_{n+1})+(II_{2}\cdots I_{n+1})\subseteq I$.
Set $I_{1}:=J+I$. Then, by hypothesis $I_{2}\cdots I_{n+1}\subseteq I$ or
there exists $2\leq i\leq n+1$ such that $(J+I)I_{2}\cdots\widehat{I_{i}}\cdots I_{n+1}\subseteq I$. Therefore,
$I_{2}\cdots I_{n+1}\subseteq I$ or
there exists $2\leq i\leq n+1$ such that $JI_{2}\cdots\widehat{I_{i}}\cdots I_{n+1}\subseteq I$.
So $I$ is strongly weakly $n$-absorbing.
\end{proof}

\begin{theorem}\label{T7}
Let $R$ be a commutative ring and $J$ be a weakly $n$-absorbing ideal of $R$.
\begin{enumerate}
\item If $I$ is an ideal of $R$ with $I\subseteq J$, then $J/I$ is a weakly $n$-absorbing ideal of $R/I$.
\item If $T$ is a subring of $R$, then $J\cap T$ is a weakly $n$-absorbing ideal of $T$.
\item If $S$ is a multiplicatively closed subset of $R$ with $J\cap S = \emptyset$, then $J_S$ is a weakly $n$-absorbing ideal of $R_S$.
\end{enumerate}
\end{theorem}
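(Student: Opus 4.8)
The plan is to verify each of the three parts by the standard "pullback/pushforward of a witnessing factorization" argument, taking care that the nonzero hypothesis in the definition of weakly $n$-absorbing is preserved under the relevant map.

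For part (1), I would take $\bar a_1,\dots,\bar a_{n+1}\in R/I$ with $0\neq \bar a_1\cdots\bar a_{n+1}\in J/I$, lift to $a_1,\dots,a_{n+1}\in R$, and observe that $a_1\cdots a_{n+1}\in J$ since $I\subseteq J$; moreover $a_1\cdots a_{n+1}\notin I$ because its image in $R/I$ is nonzero, so in particular $a_1\cdots a_{n+1}\neq 0$. Applying that $J$ is weakly $n$-absorbing yields $n$ of the $a_i$'s whose product lies in $J$, hence $n$ of the $\bar a_i$'s whose product lies in $J/I$. For part (2), take $a_1,\dots,a_{n+1}\in T$ with $0\neq a_1\cdots a_{n+1}\in J\cap T$; then $0\neq a_1\cdots a_{n+1}\in J$, apply the weakly $n$-absorbing property of $J$ in $R$ to get $n$ of the $a_i$'s with product in $J$, and since those $a_i$'s lie in $T$ the product also lies in $T$, hence in $J\cap T$. (I should also note $J\cap T$ is a proper ideal of $T$: if $1\in J\cap T$ then $1\in J$, contradicting properness of $J$.)

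For part (3), I would take $x_1/s_1,\dots,x_{n+1}/s_{n+1}\in R_S$ with $0\neq (x_1/s_1)\cdots(x_{n+1}/s_{n+1})\in J_S$. The element $x_1\cdots x_{n+1}/(s_1\cdots s_{n+1})$ lying in $J_S$ means there is $t\in S$ with $t\,x_1\cdots x_{n+1}\in J$. The product being nonzero in $R_S$ forces $t\,x_1\cdots x_{n+1}\neq 0$ in $R$ (indeed if $t x_1\cdots x_{n+1}=0$ then the localized element would be $0$). Now I would apply part (1) of Theorem~\ref{intersection} — the $m$-factor version of the definition — to the $(n+2)$ elements $t,x_1,\dots,x_{n+1}$: since $0\neq t\,x_1\cdots x_{n+1}\in J$, there are $n$ of these whose product is in $J$. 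Passing to $R_S$, the image of $t$ is a unit, so deleting $t$ from (or absorbing it into) any such subproduct still gives a subproduct of $n$ of the $x_i/s_i$ (up to unit factors) lying in $J_S$; one checks the bookkeeping so that exactly $n$ of the original fractions appear. Finally $J_S$ is proper because $J\cap S=\emptyset$.

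The only genuinely delicate point is the bookkeeping in part (3): after invoking the $m$-factor form of the definition on $\{t,x_1,\dots,x_{n+1}\}$, the selected $n$ factors might or might not include $t$, and I want to conclude a statement about $n$ of the $x_i/s_i$ specifically. If $t$ is among the chosen factors, the product of the chosen set is $t$ times a product of $n-1$ of the $x_i$'s, and since $t/1$ is a unit in $R_S$ that product of $n-1$ fractions times any one more fraction still lands in $J_S$ after we observe $t\cdot(\text{those }n-1)\in J \Rightarrow (\text{those }n-1)\text{'s image}\in J_S$, and we may adjoin one extra factor freely since $J_S$ is an ideal; if $t$ is not chosen, we already have $n$ of the $x_i$'s with product in $J$, hence $n$ of the fractions with product in $J_S$ directly. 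I expect this case analysis to be the main (though still routine) obstacle, and everything else is a mechanical transfer of the factorization.
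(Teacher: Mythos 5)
Your proposal is correct, and parts (1) and (2) are exactly the paper's argument (nonzero image forces the lifted product to lie in $J\setminus\{0\}$, resp.\ intersect with $T$). In part (3) you take a slightly different but equivalent route: you clear denominators to get $0\neq t\,x_1\cdots x_{n+1}\in J$ with $t\in S$, invoke the $m$-factor characterization of Theorem~\ref{intersection}(1) with the $n+2$ elements $t,x_1,\dots,x_{n+1}$, and split on whether $t$ is among the chosen factors (using that $t/1$ is a unit in $R_S$ and padding by one extra fraction when only $n-1$ of the $x_i$'s are selected); the paper instead absorbs the clearing element $vu\in S$ into the first numerator, assumes all subproducts omitting some $a_i$ with $i\geq 2$ fail to lie in $J_S$, and applies the definition with $n+1$ factors to force $a_2\cdots a_{n+1}\in J$. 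Both hinge on the same two observations — clearing the denominator preserves nonvanishing of the product, and the clearing element becomes a unit after localization — so the proofs are essentially equivalent, with your version trading the paper's WLOG for the previously stated $m$-factor lemma; your explicit remarks on properness of $J\cap T$ and $J_S$ are correct and harmless additions.
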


\begin{proof}
(1) Let $\bar{R}= R/I,\bar{J} = J/I$ and $\bar{a}_1, \bar{a}_2,\dots,\bar{a}_{n+1}\in \bar{R}$ such that $\bar{a}_1 \bar{a}_2\cdots \newline\bar{a}_{n+1}\in \bar{J}\setminus \{0\}$. Since $\bar{a}_1 \bar{a}_2 \cdots \bar{a}_{n+1}\neq 0$, so $a_1 a_2\cdots a_{n+1} \in R\setminus I$. Hence $a_1 a_2\cdots a_{n+1} \in J\setminus \{0\}$. As $J$ is a weakly $n$-absorbing ideal of $R$, we have $n$ of $a_i$'s whose product is in $J$. Then there are $n$ of $\bar{a}_i$'s whose product is in $\bar{J}$.\\
(2) It's obvious.\\ 
(3) Assume that $0\neq(a_1/s_1)(a_2/s_2)\cdots(a_{n+1}/s_{n+1})\in J_S$  such that $a_1,a_2,...,\\a_{n+1}\in R$ and $s_1,s_2,\dots,s_{n+1}\in S$ and $$(a_1/s_1)(a_2/s_2)\cdots\widehat{(a_i/s_i)}\cdots(a_{n+1}/s_{n+1})\notin J_S,$$ for any $2\leq i\leq n+1$. Now let $(a_1 a_2\cdots a_{n+1})/( s_1 s_2\cdots s_{n+1}) = (x/u)$ for some $x\in J$ and $u\in S$. Then there exists $v\in S$ such that $vu(a_1 a_2\cdots a_{n+1}) = vx(s_1 s_2\cdots s_{n+1})$. So we have $(vua_1)a_2\cdots a_{n+1}\in J\setminus \{0\}$ but the product of $(vua_1)$ with $n-1$ of $a_i$'s for  $2\leq i\leq n+1$ is not in $J$. So we conclude $a_2\cdots a_{n+1}\in J$ and then $(a_2/s_2)\cdots (a_{n+1}/s_{n+1})\in J_S$, that is, $J_S$ is a weakly $n$-absorbing ideal of $R_S$.
\end{proof}

\begin{theorem}\label{factor}
Let $I\subseteq J$ be proper ideals of a ring $R$. If $I$ is a weakly $n$-absorbing ideal of $R$ and $J/I$ is a weakly $n$-absorbing ideal of $R/I$, then $J$ is a  weakly $n$-absorbing ideal of $R$.
\end{theorem}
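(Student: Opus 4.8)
The plan is to argue by a dichotomy according to whether the given product lands inside the smaller ideal $I$ or not. Suppose $a_1,\dots,a_{n+1}\in R$ satisfy $0\neq a_1\cdots a_{n+1}\in J$; we must produce $n$ of the $a_i$'s whose product lies in $J$.

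First I would dispose of the case $a_1\cdots a_{n+1}\in I$. Then $0\neq a_1\cdots a_{n+1}\in I$, and since $I$ is a weakly $n$-absorbing ideal of $R$ there are $n$ of the $a_i$'s whose product lies in $I\subseteq J$, which is exactly what we want. For the remaining case, assume $a_1\cdots a_{n+1}\notin I$. Passing to $\bar R=R/I$, the image $\bar a_1\cdots\bar a_{n+1}=\overline{a_1\cdots a_{n+1}}$ is a \emph{nonzero} element of $\bar J=J/I$, the nonvanishing being precisely the running hypothesis $a_1\cdots a_{n+1}\notin I$ of this case. Since $J/I$ is a weakly $n$-absorbing ideal of $R/I$ by hypothesis, there exist $n$ indices such that the product of the corresponding $\bar a_i$'s lies in $J/I$; lifting this relation back to $R$, the product of those $n$ elements $a_i$ lies in $J$. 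Combining the two cases shows that $J$ is weakly $n$-absorbing.

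I do not expect any genuine obstacle here: the only point requiring a moment's care is the observation that ``$\overline{a_1\cdots a_{n+1}}\neq 0$ in $R/I$'' is literally the statement ``$a_1\cdots a_{n+1}\notin I$'', so the two cases are exhaustive and each one feeds exactly the right nonvanishing hypothesis into the appropriate weakly $n$-absorbing ideal ($I$ in the first case, $J/I$ in the second). This is the same bookkeeping already used in the proof of Theorem \ref{T7}(1), so the argument can be kept to a few lines.
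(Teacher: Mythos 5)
Your proof is correct and is essentially identical to the paper's own argument: both split on whether $a_1\cdots a_{n+1}$ lies in $I$, applying the weakly $n$-absorbing property of $I$ in the first case and of $J/I$ (via the nonzero image in $R/I$) in the second.
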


\begin{proof}
Suppose that $I$ is a weakly $n$-absorbing ideal of $R$ and $J/I$ is a weakly $n$-absorbing ideal of $R/I$. Let $0\neq a_{1}\cdots a_{n+1}\in J$ where $a_{1},\dots,a_{n+1}\in R$, so $(a_{1}+I)\cdots(a_{n+1}+I)\in J/I$. If $a_{1}\cdots a_{n+1}\in I$, then for some $1\leq i\leq n+1$, $a_{1}\cdots\widehat{a_{i}}\cdots a_{n+1}\in I\subseteq J$, because $I$ is weakly $n$-absorbing. If $a_{1}\cdots a_{n+1}\notin I$, then for some $1\leq i\leq n+1$, $(a_{1}+I)\cdots\widehat{(a_{i}+I)}\cdots (a_{n+1}+I)\in J/I$, since $J/I$ is a weakly $n$-absorbing ideal of $R/I$. So $a_{1}\cdots\widehat{a_{i}}\cdots a_{n+1}\in J$. Consequently $J$ is a  weakly $n$-absorbing ideal of $R$.
\end{proof}

\begin{theorem}
Let $I$ be an ideal of an integral domain $R$. Then $\langle I,X\rangle$ is a weakly $n$-absorbing ideal of
$R[X]$ if and only if $I$ is a weakly $n$-absorbing ideal of $R$. 
\end{theorem}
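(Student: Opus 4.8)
The plan is to treat the two implications separately, with the backward implication carrying essentially all of the work (and being the only place the integral‑domain hypothesis is used).

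For the forward direction I would use the observation that $\langle I,X\rangle\cap R=I$: a polynomial lies in $\langle I,X\rangle$ precisely when its constant term lies in $I$, so the contraction of $\langle I,X\rangle$ to the subring $R\subseteq R[X]$ is exactly $I$. Then Theorem~\ref{T7}(2) applied with $J=\langle I,X\rangle$ and $T=R$ gives at once that $I=\langle I,X\rangle\cap R$ is a weakly $n$-absorbing ideal of $R$. Note this half does not need $R$ to be a domain.

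For the converse, assume $I$ is weakly $n$-absorbing in $R$ and take $f_1,\dots,f_{n+1}\in R[X]$ with $0\neq f_1\cdots f_{n+1}\in\langle I,X\rangle$. Write $f_i=c_i+Xg_i$ with $c_i=f_i(0)\in R$ and $g_i\in R[X]$. Comparing constant terms, membership in $\langle I,X\rangle$ forces $c_1\cdots c_{n+1}\in I$. Now split into two cases. If $c_1\cdots c_{n+1}=0$, then since $R$ is a domain some $c_j=0$, so $f_j=Xg_j\in\langle X\rangle\subseteq\langle I,X\rangle$; hence the product of $f_j$ with any $n-1$ of the remaining indices lies in $\langle I,X\rangle$, and since $n+1>n$ there are enough indices left to form such a product, so we have $n$ of the $f_i$'s whose product is in $\langle I,X\rangle$. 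If instead $c_1\cdots c_{n+1}\neq 0$, then $0\neq c_1\cdots c_{n+1}\in I$, and as $I$ is weakly $n$-absorbing there is a subset $T\subseteq\{1,\dots,n+1\}$ with $|T|=n$ and $\prod_{i\in T}c_i\in I$; then $\prod_{i\in T}f_i$ has constant term $\prod_{i\in T}c_i\in I$, so $\prod_{i\in T}f_i\in\langle I,X\rangle$. In either case $\langle I,X\rangle$ is weakly $n$-absorbing. I would also record that $\langle I,X\rangle$ is proper since $R[X]/\langle I,X\rangle\cong R/I\neq 0$.

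The only genuine subtlety, and the one spot where ``integral domain'' is indispensable, is the case $c_1\cdots c_{n+1}=0$: over a domain this forces a single constant term to vanish, putting the corresponding $f_i$ into $\langle X\rangle$, after which it can be padded to an $n$-fold product inside $\langle I,X\rangle$; over a general ring this step breaks down. A minor point to state explicitly is that it suffices to produce fewer than $n$ of the $f_i$'s with product in $\langle I,X\rangle$, since any such product can be enlarged to one with exactly $n$ factors by adjoining more of the $f_i$'s.
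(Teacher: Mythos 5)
Your proof is correct, but it takes a more hands-on route than the paper, which disposes of both directions in one line by passing to the quotient $R[X]/\langle X\rangle\simeq R$, under which $\langle I,X\rangle/\langle X\rangle$ corresponds to $I$: the forward implication is Theorem~\ref{T7}(1) applied to $\langle X\rangle\subseteq\langle I,X\rangle$, and the converse is Theorem~\ref{factor} with the smaller ideal taken to be $\langle X\rangle$, which is weakly $n$-absorbing because it is prime ($R$ being a domain). Your forward direction instead contracts along the subring $R\subseteq R[X]$ via Theorem~\ref{T7}(2) and the identity $\langle I,X\rangle\cap R=I$; both are equally quick. Your converse, with the decomposition $f_i=c_i+Xg_i$ and the case split on whether $c_1\cdots c_{n+1}=0$, is in effect an unfolding of the paper's appeal to Theorem~\ref{factor}: the case $c_1\cdots c_{n+1}\neq 0$ is the ``$J/I$ weakly $n$-absorbing'' step, and the case $c_1\cdots c_{n+1}=0$ is exactly where $\langle X\rangle$ being prime is used, which is where you invoke the domain hypothesis. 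What your version buys is self-containedness and an explicit diagnosis of where the argument breaks for non-domains (a zero product of constant terms need not have a zero factor); what the paper's version buys is brevity and reuse of the general lemmas, since Theorem~\ref{factor} does the bookkeeping (including your ``padding to exactly $n$ factors'' remark) once and for all.
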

\begin{proof}
By Theorem \ref{T7}(1), Theorem \ref{factor} and regarding the isomorphism $\langle I,X\rangle/\langle X\rangle\simeq I$ in $R[X]/\langle X\rangle\simeq R$ we have the result.
\end{proof}

\begin{proposition}
Let $I$ be a weakly primary ideal of a ring $R$, and let $(\sqrt{I})^{n}\subseteq I$ for some positive integer $n$ (for example, if $\sqrt{I}$ is a finitely generated ideal). Then $I$ is a weakly $n$-absorbing ideal of $R$. 
\end{proposition}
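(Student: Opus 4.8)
The plan is to combine the defining property of a weakly primary ideal with the hypothesis $(\sqrt{I})^{n}\subseteq I$ in a direct way. Recall that a proper ideal $I$ is \emph{weakly primary} when $0\neq ab\in I$ forces $a\in I$ or $b\in\sqrt{I}$. The parenthetical remark is the standard fact that if $\sqrt{I}=\langle x_{1},\dots,x_{k}\rangle$ with $x_{j}^{m_{j}}\in I$, then any product of $N:=\sum_{j=1}^{k}(m_{j}-1)+1$ of the generators must contain some $x_{j}$ at least $m_{j}$ times, whence $(\sqrt{I})^{N}\subseteq I$; so in what follows I may assume $(\sqrt{I})^{n}\subseteq I$ for the given $n$. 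Let $a_{1},\dots,a_{n+1}\in R$ with $0\neq a_{1}\cdots a_{n+1}\in I$; the goal is to exhibit $n$ of the $a_{i}$'s whose product lies in $I$.

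First I would run a dichotomy over each index $i\in\{1,\dots,n+1\}$. Write $a_{1}\cdots a_{n+1}=b_{i}c_{i}$ with $b_{i}=a_{1}\cdots\widehat{a_{i}}\cdots a_{n+1}$ and $c_{i}=a_{i}$. Since $b_{i}c_{i}=a_{1}\cdots a_{n+1}$ is a nonzero element of $I$, the weakly primary property yields $b_{i}\in I$ or $c_{i}=a_{i}\in\sqrt{I}$. If the first alternative holds for some $i$, then $b_{i}$ is exactly the product of the $n$ elements $\{a_{1},\dots,a_{n+1}\}\setminus\{a_{i}\}$ and it lies in $I$, so $I$ satisfies the weakly $n$-absorbing condition for this tuple.

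In the remaining case the second alternative holds for every $i$, that is, $a_{i}\in\sqrt{I}$ for all $1\leq i\leq n+1$. In particular $a_{1},\dots,a_{n}\in\sqrt{I}$, so $a_{1}\cdots a_{n}\in(\sqrt{I})^{n}\subseteq I$, and again $n$ of the $a_{i}$'s have product in $I$. Since the tuple $(a_{1},\dots,a_{n+1})$ was arbitrary, $I$ is a weakly $n$-absorbing ideal of $R$.

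I do not expect a genuine obstacle here; the only point requiring a moment's care is that each partial product $b_{i}$ is automatically nonzero, being a factor of the nonzero element $a_{1}\cdots a_{n+1}$, so the weakly primary hypothesis is indeed applicable at every step and no vacuous case arises. (One could alternatively invoke Theorem~\ref{intersection}(1) to phrase the argument for products of arbitrary length, but that is not needed.)
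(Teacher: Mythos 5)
Your proof is correct and follows essentially the same route as the paper: apply the weakly primary property to the factorization $a_{1}\cdots a_{n+1}=\bigl(a_{1}\cdots\widehat{a_{i}}\cdots a_{n+1}\bigr)a_{i}$, so that either some omitted-product already lies in $I$ or every $a_{i}\in\sqrt{I}$, in which case $a_{1}\cdots a_{n}\in(\sqrt{I})^{n}\subseteq I$. (Only your closing remark is slightly off-target: what the weakly primary hypothesis needs is that the full product $a_{1}\cdots a_{n+1}$ is nonzero, which is given, not that the partial products $b_{i}$ are nonzero.)
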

\begin{proof}
Let $0\neq a_{1}\cdots a_{n+1}\in I$ for $a_{1},\dots,a_{n+1}\in R$. If one of the $a_{i}$'s is not in $\sqrt{I}$, then
the product of the other $a_{i}$'s is in $I$, since $I$ is weakly primary. Thus we may assume that
every $a_{i}$ is in $\sqrt{I}$. Since $(\sqrt{I})^{n}\subseteq I$, we have $a_{1}\cdots a_{n}\in I$. Hence $I$ is a weakly $n$-absorbing ideal of $R$.
\end{proof}


\begin{remark}
Let $R$ be a ring such that its zero ideal is $n$-absorbing (e.g., let $R$ be an integral domain). 
Then every weakly $n$-absorbing ideal of $R$ is an $n$-absorbing ideal.
\end{remark}

Let $M$ be an $R$-module. We say that $M$ is secondary precisely when $M\neq0$ and, for each $r\in R$, either $rM=M$ or there
exists $n\in\mathbb{N}$ such that $r^{n}M=0$. When this is the case, $P:=\sqrt{(0:_{R}M)}$
is a prime ideal of $R$: in these circumstances, we say that $M$ is a $P$-secondary
$R$-module. A secondary ideal of $R$ is just a secondary submodule
of the $R$-module $R$ (see \cite{Ma}).

\begin{theorem}
Let $I$ be a secondary ideal of a ring $R$. If $J$ is a weakly $n$-absorbing ideal of $R$, then $I\cap J$ is secondary.
\end{theorem}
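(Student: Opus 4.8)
The plan is to verify the defining conditions of a secondary ideal for $K:=I\cap J$ directly: that $K\neq 0$ and that for each $r\in R$ either $rK=K$ or $r^{k}K=0$ for some $k\in\mathbb{N}$. Since a secondary module is by definition nonzero, I would first isolate the degenerate cases: if $K=0$ there is nothing to prove, and if $I\subseteq J$ then $K=I$ is secondary by hypothesis; so from now on assume $K\neq 0$ and $I\not\subseteq J$. Let $P=\sqrt{(0:_{R}I)}$, which is prime because $I$ is secondary, and fix an arbitrary $r\in R$.

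The easy half is the ``nilpotent'' alternative. If $r\in P$, then $r^{k}\in(0:_{R}I)$ for some $k$, so $r^{k}K\subseteq r^{k}I=0$, and the secondary condition holds for this $r$.

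The substantive half is the case $r\notin P$. Then $r^{k}I\neq 0$ for every $k$, so the secondary property of $I$ leaves only the alternative $rI=I$, and hence $r^{m}I=I$ for every $m\geq 1$; in particular $r^{n}I=I$. Take any $0\neq x\in K$ (the case $x=0$ is trivial). From $x\in I=r^{n}I$ we may write $x=r^{n}w$ with $w\in I$, so $0\neq r\cdots r\,w\in J$ is a product of $n+1$ elements of $R$ lying in $J$. Because $J$ is weakly $n$-absorbing, $n$ of those $n+1$ factors have product in $J$; as the factors are $n$ copies of $r$ together with $w$, this forces $r^{n}\in J$ or $r^{n-1}w\in J$. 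If $r^{n}\in J$, then $I=r^{n}I\subseteq J$, contradicting $I\not\subseteq J$; hence $r^{n-1}w\in J$, and since also $r^{n-1}w\in I$ we obtain $z:=r^{n-1}w\in K$ with $rz=r^{n}w=x$. As $x$ ranged over $K$, this gives $K\subseteq rK\subseteq K$, i.e. $rK=K$.

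Combining the two cases shows $K=I\cap J$ is secondary (and, as a byproduct, $(0:_{R}I)\subseteq(0:_{R}K)$ together with $rK=K$ for $r\notin P$ yields $\sqrt{(0:_{R}K)}=P$, so $I\cap J$ is $P$-secondary). The only place the hypothesis on $J$ enters — and the step I expect to be the crux — is the dichotomy ``$r^{n}\in J$ or $r^{n-1}w\in J$'' pulled out of the weakly $n$-absorbing property applied to the product $r^{n}w$; everything else is routine, the one subtlety being that the degenerate cases $I\cap J=0$ and $I\subseteq J$ must be handled up front since secondariness requires nonzeroness.
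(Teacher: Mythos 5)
Your proof is correct and follows essentially the same argument as the paper: for $r\notin P=\sqrt{(0:_R I)}$ you write $0\neq x=r^{n}w$ with $w\in I$ and apply the weakly $n$-absorbing property to get $r^{n}\in J$ or $r^{n-1}w\in J$, exactly as in the paper's proof. The only cosmetic difference is bookkeeping: you dispose of the case $I\subseteq J$ (and $I\cap J=0$) up front so that $r^{n}\in J$ yields a contradiction, whereas the paper handles $r^{n}\in J$ inline by concluding $a(I\cap J)=aI=I=I\cap J$ directly.
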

\begin{proof}
Let $I$ be a $P$-secondary ideal of $R$, and let $a\in R$. If $a\in P=\sqrt{(0:_{R}I)}$, then clearly $a\in\sqrt{(0:_{R}I\cap J)}$. If $a\notin P$, then $a^{n}\notin P$, and so $a^{n}I=I$. We calim that $a(I\cap J)=I\cap J$. Assume that $0\neq x\in I\cap J$. There is an element $b\in I$ such that $x=a^{n}b\in J$. Since $J$ is  weakly $n$-absorbing we have either $a^{n}\in J$ or $a^{n-1}b\in J$. If $a^{n}\in J$, then $I=a^{n}I\subseteq J$ and so $a(I\cap J)=aI=I=I\cap J$. If $a^{n-1}b\in J$, then $x=a^{n}b\in a(I\cap J)$ and we are done.
\end{proof}

A weakly prime ideal $P$ of a ring $R$ is said to be a divided weakly prime ideal if $P\subset xR$ for every $x \in R\backslash P$; thus a divided weakly prime ideal is comparable to every ideal of $R$.
\begin{theorem} 
Let $P$ be a divided weakly prime ideal of a ring $R$, and let $I$ be a weakly $n$-absorbing
ideal of $R$ with $\sqrt{I}=P$. Then $I$ is a weakly primary ideal of R.
\end{theorem}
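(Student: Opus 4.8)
The plan is to verify the definition of a weakly primary ideal directly: given $a,b\in R$ with $0\neq ab\in I$, I must show $a\in I$ or $b\in\sqrt I$, so I assume $b\notin\sqrt I=P$ and aim to produce $a\in I$.

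The first step is to use the two hypotheses on $P$ to force $a$ to be divisible by every power of $b$. Since $P$ is a divided weakly prime ideal and $b\notin P$, we have $P\subseteq bR$. Since $0\neq ab\in I\subseteq\sqrt I=P$ and $P$ is weakly prime with $b\notin P$, it follows that $a\in P\subseteq bR$; write $a=br_1$. Because $ab\neq0$ we have $a\neq0$, hence $r_1\neq0$, and then $0\neq a=br_1\in P$ together with $b\notin P$ and weak primeness of $P$ forces $r_1\in P$. Applying the same reasoning to $r_1$ in place of $a$ (it lies in $P\subseteq bR$ and is nonzero) yields $r_2\in P$ with $r_1=br_2$, and by induction one obtains nonzero elements $r_k\in P$ with $a=b^k r_k$ for all $k\geq1$. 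In particular $a=b^n s$ with $s:=r_n$.

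The second step is to bring in the weakly $n$-absorbing hypothesis. We have $0\neq ab=b^{n+1}s\in I$, a product of $n+2>n$ elements of $R$ (namely $n+1$ copies of $b$ and one copy of $s$); hence, by Theorem~\ref{intersection}(1), some $n$ of these factors have product in $I$. Either those $n$ factors are all copies of $b$, giving $b^n\in I$, or they are $n-1$ copies of $b$ together with $s$, giving $b^{n-1}s\in I$. The first case yields $b\in\sqrt I=P$, contradicting $b\notin P$; so $b^{n-1}s\in I$, and therefore $a=b^n s=b\cdot(b^{n-1}s)\in I$, which is what we wanted.

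The step I expect to require the most care is the descent in the first paragraph: one must make sure it never stalls, i.e., that each new $r_k$ really lies in $P$ so that one can keep dividing by $b$. This is exactly where $ab\neq0$ is essential — it guarantees $a\neq0$ and hence every $r_k\neq0$, so that the weakly prime dichotomy for $P$ genuinely applies at each stage (were some $r_k$ zero, the dichotomy could fail). The same nonvanishing, now as $b^{n+1}s=ab\neq0$, is what licenses the use of Theorem~\ref{intersection}(1) in the last step; the rest is routine.
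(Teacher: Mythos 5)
Your proof is correct, and its skeleton matches the paper's: use dividedness to pull a power of $b$ out of $a$, then let the weakly $n$-absorbing hypothesis act on the nonzero product $ab$. The execution differs in two places, though. Where you iterate the divided/weakly-prime dichotomy $n$ times to reach $a=b^n s$ (tracking that each quotient $r_k$ is nonzero and lies in $P$), the paper applies dividedness only once, to the element $y^{n-1}$: since $\sqrt{I}$ is a radical ideal, $y\notin P=\sqrt{I}$ already forces $y^{n-1}\notin P$, so $P\subset y^{n-1}R$ and $x=y^{n-1}z$ in one stroke. As a result the paper can apply the definition of weakly $n$-absorbing directly to the $(n+1)$-fold product $0\neq y^{n}z=xy\in I$ and, because $y^{n}\notin I$, read off $x=y^{n-1}z\in I$ immediately; your $(n+2)$-fold factorization $ab=b^{n+1}s$ instead invokes the reformulation in Theorem~\ref{intersection}(1) and needs the extra closing step $a=b\cdot(b^{n-1}s)\in I$. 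Both arguments are sound: the paper's is slightly more economical, while yours trades the radical-ideal observation for an inductive descent whose nonvanishing bookkeeping you handle correctly.
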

\begin{proof}
Let $0\neq xy\in I$ for $x,y\in R$ and $y\notin P$. Then $x\in P$. If $y^{n-1}=0$, then $y\in\sqrt{I}=P$, which is a contradiction. Therefore $y^{n-1}\neq0$, and so $y^{n-1}\notin P$. Thus $P\subset y^{n-1}R$, because $P$ is a divided weakly prime ideal of $R$. Hence $x=y^{n-1}z$ for some $z\in R$. As $0\neq y^{n}z=yx\in I$, $y^{n}\notin I$, and $I$ is a weakly $n$-absorbing ideal of $R$, we have $x=y^{n-1}z\in I$. Hence $I$ is a weakly primary ideal of $R$. 
\end{proof}

Let $I$ be a weakly $n$-absorbing ideal of a ring $R$ and $a_{1},..., a_{n+1}\in R$. We say $(a_{1},\dots,a_{n+1})$
is an $(n+1)$-tuple-zero of $I$ if $a_{1}\cdots a_{n+1}=0$, and for each $1\leq i\leq n+1$, $a_{1}\cdots\widehat{a_{i}}\cdots a_{n+1}\notin I$.

In the following Theorem $a_{1}\cdots\widehat{a_{i}}\cdots\widehat{a_{j}}\cdots a_{n}$ denotes that $a_{i}$ and $a_{j}$ are eliminated from $a_{1}\cdots a_{n}$.

\begin{theorem}\label{first}
Let $I$ be a weakly $n$-absorbing ideal of a ring $R$ and suppose that
$(a_{1},\dots,a_{n+1})$ is an $(n+1)$-tuple-zero of $I$ for some $a_{1},..., a_{n+1}\in R$. Then for every $1\leq \alpha_{1},\alpha_{2},\dots,\alpha_{m}\leq n+1$ which $1\leq m\leq n$, $$a_{1}\cdots\widehat{a_{\alpha_{1}}}\cdots\widehat{a_{\alpha_{2}}}\cdots\widehat{a_{\alpha_{m}}}\cdots a_{n+1}I^{m}=\{0\}.$$
\end{theorem}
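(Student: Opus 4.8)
The plan is to induct on $m$. Since the $(n+1)$-tuple-zero condition $a_{1}\cdots a_{n+1}=0$, $a_{1}\cdots\widehat{a_{i}}\cdots a_{n+1}\notin I$ is symmetric in the $a_{i}$, I may relabel so that the $m$ omitted indices are the last $m$; thus it suffices to prove $a_{1}\cdots a_{n+1-m}\,I^{m}=\{0\}$, and note that $n+1-m\geq 1$ because $m\leq n$. Suppose this fails: choose $b_{1},\dots,b_{m}\in I$ with $a_{1}\cdots a_{n+1-m}b_{1}\cdots b_{m}\neq 0$, and set
$$P:=a_{1}\cdots a_{n+1-m}\,(a_{n+2-m}+b_{1})(a_{n+3-m}+b_{2})\cdots(a_{n+1}+b_{m}),$$
a product of exactly $n+1$ factors.

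Expanding $P$ yields a sum of terms indexed by subsets $S\subseteq\{1,\dots,m\}$, where for $j\in S$ one takes $b_{j}$ from the $j$-th of the last $m$ factors and otherwise takes $a_{n+1-m+j}$. The $S=\emptyset$ term is $a_{1}\cdots a_{n+1}=0$. For $1\leq|S|\leq m-1$, the corresponding term is a product of the $a_{i}$'s with exactly $|S|$ of them (namely those indexed by $S$) omitted, multiplied by $\prod_{j\in S}b_{j}\in I^{|S|}$; hence it is $\{0\}$ by the induction hypothesis applied with $m'=|S|<m$. Therefore $P$ collapses to the single surviving term $a_{1}\cdots a_{n+1-m}b_{1}\cdots b_{m}\neq 0$, and since $b_{1}\in I$ we get $P\in I$. (For the base case $m=1$ there are no intermediate subsets, so no induction hypothesis is needed: directly $P=a_{1}\cdots a_{n}\,a_{n+1}+a_{1}\cdots a_{n}b_{1}=a_{1}\cdots a_{n}b_{1}\neq 0$.)

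Now apply the weakly $n$-absorbing property to $0\neq P\in I$: some $n$ of the $n+1$ factors of $P$ multiply into $I$, i.e., exactly one factor is dropped. Whichever factor is dropped, expand the remaining product of $n$ factors as above; every term retaining at least one $b_{j}$ lies in $I$ (it contains a factor from $I$), so the ``all-$a$'' term must lie in $I$ as well. But that all-$a$ term is exactly $a_{1}\cdots\widehat{a_{i_{0}}}\cdots a_{n+1}$, where $i_{0}\leq n+1-m$ if an $a_{i_{0}}$ was the dropped factor, and $i_{0}=n+1-m+j_{0}$ if the dropped factor was $a_{n+1-m+j_{0}}+b_{j_{0}}$. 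By the definition of an $(n+1)$-tuple-zero this element is not in $I$, a contradiction. Hence $a_{1}\cdots a_{n+1-m}\,I^{m}=\{0\}$, and the induction is complete.

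I expect the only real subtlety to be organizing the induction so that the ``cross terms'' in the expansion of $P$ vanish — this is precisely why one proves the assertion for all $m$ at once (with the hypothesis entering at $m'=|S|$ for $1\leq|S|\leq m-1$) rather than for a single fixed $m$. Everything else is routine: the binomial-type expansion of $P$ and of the $n$-fold subproducts, and the trivial observation that a product having a factor in $I$ lies in $I$.
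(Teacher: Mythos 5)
Your proof is correct and follows essentially the same route as the paper's: induction on $m$, multiplying the retained $a_i$'s by the shifted factors $(a_{\alpha_j}+x_j)$, using the induction hypothesis to kill the cross terms so the product equals the assumed nonzero element of $I$, and then applying the weakly $n$-absorbing property to force some $n$-fold pure-$a$ subproduct into $I$, contradicting the $(n+1)$-tuple-zero condition. The only differences (relabeling the omitted indices to the last $m$ positions and treating the two "dropped factor" cases uniformly) are cosmetic.
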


\begin{proof}
We use induction on $m$. Let $m=1$ and suppose that $a_{1}\cdots\widehat{a_{\alpha_{1}}}\cdots a_{n+1}\newline x\neq0$ for some $x\in I$. Then $a_{1}\cdots\widehat{a_{\alpha_{1}}}\cdots a_{n+1}(a_{\alpha_{1}}+x)\neq0$. Since $I$ is weakly $n$-absorbing and
$a_{1}\cdots\widehat{a_{\alpha_{1}}}\cdots a_{n+1}\notin I$, we conclude that $a_{1}\cdots\widehat{a_{\alpha_{1}}}\cdots\widehat{a_{\alpha_{2}}}\cdots\newline a_{n+1}(a_{\alpha_{1}}+x)\in I$, for some $1\leq\alpha_{2}\leq n+1$ distinct from $\alpha_{1}$. Hence $a_{1}\cdots\widehat{a_{\alpha_{2}}}\cdots a_{n+1}\in I$, a contradiction. Thus $a_{1}\cdots\widehat{a_{\alpha_{1}}}\cdots a_{n+1}I=\{0\}$.\\
Now suppose $m>1$ and assume that for all integers less than $m$ the claim holds.
Let $a_{1}\cdots\widehat{a_{\alpha_{1}}}\cdots\widehat{a_{\alpha_{2}}}\cdots\widehat{a_{\alpha_{m}}}\cdots a_{n+1}x_{1}x_{2}\cdots x_{m}\neq0$ for some $x_{1},x_{2},\dots,x_{m}\in I$. 
By  induction hypothesis, we conclude that

{\small $$
\begin{array}{ll}
a_{1}\cdots\widehat{a_{\alpha_{1}}}\cdots &\widehat{a_{\alpha_{2}}}\cdots\widehat{a_{\alpha_{m}}}\cdots
a_{n+1}(a_{\alpha_{1}}+x_{1})(a_{\alpha_{2}}+x_{2})\cdots(a_{\alpha_{m}}+x_{m})\\
&=a_{1}\cdots\widehat{a_{\alpha_{1}}}\cdots\widehat{a_{\alpha_{2}}}\cdots\widehat{a_{\alpha_{m}}}\cdots a_{n+1}x_{1}x_{2}\cdots x_{m}\neq0.
\end{array}
$$}
 Hence either
 \begin{eqnarray*}
a_{1}\cdots\widehat{a_{\alpha_{1}}}\cdots\widehat{a_{\alpha_{2}}}\cdots\widehat{a_{\alpha_{m}}}\cdots
a_{n+1}(a_{\alpha_{1}}+x_{1})\cdots\widehat{(a_{\alpha_{i}}+x_{i})}\cdots(a_{\alpha_{m}}+x_{m})\in I
 \end{eqnarray*}
 for some $1\leq i\leq m$; or
 \begin{eqnarray*}
a_{1}\cdots\widehat{a_{\alpha_{1}}}\cdots\widehat{a_{\alpha_{2}}}\cdots\widehat{a_{\alpha_{m}}}\cdots\widehat{a_{j}}\cdots
a_{n+1}(a_{\alpha_{1}}+x_{1})(a_{\alpha_{2}}+x_{2})\cdots(a_{\alpha_{m}}+x_{m})\in I,
 \end{eqnarray*}
for some $j$ distinct from ${\alpha_{i}}$'s. Thus either $a_{1}\cdots{a_{\alpha_{1}}}\cdots\widehat{a_{\alpha_{i}}}\cdots{a_{\alpha_{m}}}\cdots a_{n+1}\in I$ or $a_{1}\cdots{a_{\alpha_{1}}}\cdots{a_{\alpha_{m}}}\cdots\widehat{a_{j}}\cdots a_{n+1}\in I$,
a contradiction. Thus $$a_{1}\cdots\widehat{a_{\alpha_{1}}}\cdots\widehat{a_{\alpha_{2}}}\cdots\widehat{a_{\alpha_{m}}}\cdots a_{n+1}I^{m}=\{0\}.$$
\end{proof}

Now we state a version of Nakayama's lemma.
\begin{theorem}\label{nil}
Let $I$ be a weakly $n$-absorbing ideal of $R$ that is not an $n$-absorbing
ideal. Then
\begin{enumerate}
\item $I^{n+1}=\{0\}$.
\item $\sqrt{I}=Nil(R)$.
\item If $M$ is an $R$-module and $IM=M$, then $M=\{0\}$.
\end{enumerate}
\end{theorem}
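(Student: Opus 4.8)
The plan is to concentrate all the work in part (1), $I^{n+1}=\{0\}$; once that is in hand, (2) and (3) drop out in a line each. For (2): if $x\in\sqrt{I}$ then $x^{k}\in I$ for some $k\geq 1$, hence $x^{k(n+1)}=(x^{k})^{n+1}\in I^{n+1}=\{0\}$, so $\sqrt{I}\subseteq\mathrm{Nil}(R)$, while $\mathrm{Nil}(R)=\sqrt{\{0\}}\subseteq\sqrt{I}$ is automatic. For (3): $M=IM=I(IM)=I^{2}M=\cdots=I^{n+1}M=\{0\}$.

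To prove (1) I would start from the observation that, since $I$ is weakly $n$-absorbing but not $n$-absorbing, there are $a_{1},\dots,a_{n+1}\in R$ with $a_{1}\cdots a_{n+1}\in I$ and $\prod_{i\neq j}a_{i}\notin I$ for every $j$; were $a_{1}\cdots a_{n+1}$ nonzero, the weakly $n$-absorbing hypothesis would already force one of these subproducts into $I$, so $a_{1}\cdots a_{n+1}=0$ and $(a_{1},\dots,a_{n+1})$ is an $(n+1)$-tuple-zero of $I$. Theorem \ref{first} then supplies $\big(\prod_{i\notin S}a_{i}\big)I^{|S|}=\{0\}$ for every $S\subseteq\{1,\dots,n+1\}$ with $1\le|S|\le n$.

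Next, for arbitrary $y_{1},\dots,y_{n+1}\in I$ I would expand
\[\prod_{i=1}^{n+1}(a_{i}+y_{i})=\sum_{S\subseteq\{1,\dots,n+1\}}\Big(\prod_{i\in S}y_{i}\Big)\Big(\prod_{i\notin S}a_{i}\Big).\]
The summand for $S=\emptyset$ equals $a_{1}\cdots a_{n+1}=0$, and each summand with $1\le|S|\le n$ lies in $\big(\prod_{i\notin S}a_{i}\big)I^{|S|}=\{0\}$ because $\prod_{i\in S}y_{i}\in I^{|S|}$; hence $\prod_{i=1}^{n+1}(a_{i}+y_{i})=y_{1}\cdots y_{n+1}$. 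Suppose $y_{1}\cdots y_{n+1}\neq 0$. Since $y_{1}\cdots y_{n+1}\in I^{n+1}\subseteq I$, we get $0\neq\prod_{i=1}^{n+1}(a_{i}+y_{i})\in I$, so the weakly $n$-absorbing property applied to the $n+1$ elements $a_{1}+y_{1},\dots,a_{n+1}+y_{n+1}$ yields an index $j$ with $\prod_{i\neq j}(a_{i}+y_{i})\in I$. Reducing modulo $I$ and using $y_{i}\in I$ gives $\prod_{i\neq j}a_{i}\equiv\prod_{i\neq j}(a_{i}+y_{i})\equiv 0\pmod{I}$, i.e. $\prod_{i\neq j}a_{i}\in I$, contradicting that $(a_{1},\dots,a_{n+1})$ is a tuple-zero. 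Therefore $y_{1}\cdots y_{n+1}=0$ for every choice of $y_{i}\in I$, which is $I^{n+1}=\{0\}$.

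The one genuine idea is the last reduction modulo $I$: the substitution $a_{i}\rightsquigarrow a_{i}+y_{i}$ is made precisely so that Theorem \ref{first} collapses the expansion to $y_{1}\cdots y_{n+1}$ (which is how we control non-vanishing), and then, because the $y_{i}$ die in $R/I$, any $n$-subproduct of the $a_{i}+y_{i}$ that lands in $I$ drags the corresponding $n$-subproduct of the $a_{i}$ into $I$. The tempting alternative — expanding $\prod_{i\neq j}(a_{i}+y_{i})$ and trying to show each mixed term lies in $I$ by hand — seems to need strictly more than Theorem \ref{first} provides, and is what I would deliberately avoid by passing to the quotient.
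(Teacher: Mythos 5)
Your proposal is correct and follows essentially the same route as the paper: both produce an $(n+1)$-tuple-zero $(a_1,\dots,a_{n+1})$, use Theorem \ref{first} to collapse $\prod_i(a_i+y_i)$ to $y_1\cdots y_{n+1}$ for $y_i\in I$, and derive a contradiction from the weakly $n$-absorbing condition by reducing an $n$-subproduct modulo $I$; your derivations of (2) and (3) from (1) also match the paper's. The only difference is that you spell out the multinomial expansion and the passage to $R/I$, which the paper leaves implicit.
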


\begin{proof}
(1) Since $I$ is not an $n$-absorbing ideal of $R$, $I$ has an $(n+1)$-truple-zero $(a_{1},\dots,a_{n+1})$ for some
$a_{1},\dots,a_{n+1}\in R$. Suppose that $x_{1}x_{2}\cdots x_{n+1}\neq0$ for some $x_{1},x_{2},\dots,x_{n+1}\in I$. Then by Theorem \ref{first} we have $$(a_{1}+x_{1})\cdots(a_{n+1}+x_{n+1})=x_{1}x_{2}\cdots x_{n+1}\neq0.$$ Hence $(a_{1}+x_{1})\cdots\widehat{(a_{i}+x_{i})}\cdots(a_{n+1}+x_{n+1})\in I$ for some $1\leq i\leq n+1$. Thus $a_{1}\cdots\widehat{a_{i}}\cdots a_{n+1}\in I$, a contradiction. Hence  $I^{n+1}=\{0\}$.\\
(2) Clearly, ${\rm Nil}(R)\subseteq\sqrt{I}$. As $I^{n+1}=\{0\}$, we get $I\subseteq{\rm Nil}(R)=\sqrt{\{0\}}$; hence $\sqrt{I}\subseteq{\rm Nil}(R)$, as required.\\
(3) Since $IM=M$, we have $M=IM=I^{n+1}M=\{0\}$.
\end{proof}

The following example shows that a proper ideal $I$ of a ring $R$ with $I^{n+1}=\{0\}$ need not be a weakly
$n$-absorbing ideal of $R$.
\begin{example}
Let $R=\mathbb{Z}_{2^{n+2}}$. Then $I=\{0,2^{n+1}\}$ is an ideal of $\mathbb{Z}_{2^{n+2}}$ and $I^{n+1}=\{0\}$, but
$2\cdots2=2^{n+1}\in I$ and $2^{n}\notin I$.
\end{example}


\begin{corollary}
Let $R$ be a ring such that ${Nil}(R)$ is an $n$-absorbing (resp. a weakly $n$-absorbing) ideal of $R$. If $I$ is a weakly $n$-absorbing ideal of $R$, then $\sqrt{I}$ is an $n$-absorbing (resp. a weakly $n$-absorbing) ideal of $R$.
\end{corollary}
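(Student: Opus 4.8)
The plan is to exploit the dichotomy furnished by Theorem~\ref{nil}. Let $I$ be a weakly $n$-absorbing ideal of $R$. If $I$ is \emph{not} an $n$-absorbing ideal of $R$, then Theorem~\ref{nil}(2) forces $\sqrt{I}={\rm Nil}(R)$, so $\sqrt{I}$ is $n$-absorbing (resp.\ weakly $n$-absorbing) immediately by the standing hypothesis on ${\rm Nil}(R)$, and there is nothing more to prove. Hence we may assume that $I$ is itself an $n$-absorbing ideal, and the entire statement collapses to the single assertion that the radical of an $n$-absorbing ideal is again an $n$-absorbing ideal; this simultaneously disposes of the ``resp.'' version, because every $n$-absorbing ideal is weakly $n$-absorbing.

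For that remaining step I would either cite the Anderson--Badawi result \cite{AB1} that $\sqrt{J}$ is $n$-absorbing whenever $J$ is, or reprove it in a line: given $a_1\cdots a_{n+1}\in\sqrt{I}$, pick $N\geq 1$ with $a_1^{N}\cdots a_{n+1}^{N}=(a_1\cdots a_{n+1})^{N}\in I$, apply the $n$-absorbing property to this product of $N(n+1)>n$ factors to obtain $n$ of them with product $a_1^{e_1}\cdots a_{n+1}^{e_{n+1}}\in I$ where $e_1+\cdots+e_{n+1}=n$ and $0\leq e_i\leq N$, observe that some $e_k$ must vanish (the $n+1$ exponents sum to $n$), and after relabelling so that $e_{n+1}=0$ note that $(a_1\cdots a_n)^{N}=\bigl(a_1^{e_1}\cdots a_n^{e_n}\bigr)\bigl(a_1^{N-e_1}\cdots a_n^{N-e_n}\bigr)\in I$ since each exponent $N-e_i$ is nonnegative; thus $a_1\cdots a_n\in\sqrt{I}$, exhibiting $n$ of the $a_i$ with product in $\sqrt{I}$.

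The one point deserving care is the invocation ``a product of $N(n+1)>n$ elements lying in $I$ has $n$ of its factors with product in $I$'': this is the non-weak counterpart of Theorem~\ref{intersection}(1), proved by the same straightforward induction but without the nonzero hypothesis (or simply quoted from \cite{AB1}), and it is this version --- not Theorem~\ref{intersection}(1) itself --- that must be used. With it in hand, assembling the two cases yields exactly the corollary, the parenthetical weak version requiring no separate treatment since whenever $I$ is $n$-absorbing we in fact obtain the stronger conclusion that $\sqrt{I}$ is $n$-absorbing.
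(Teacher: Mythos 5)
Your proof is correct and follows essentially the same route as the paper: split on whether $I$ is $n$-absorbing, use Theorem~\ref{nil}(2) to get $\sqrt{I}={\rm Nil}(R)$ in the non-$n$-absorbing case, and in the $n$-absorbing case invoke the Anderson--Badawi fact (cited in the paper as \cite[Theorem 2.1(e)]{AB1}) that the radical of an $n$-absorbing ideal is $n$-absorbing, noting that $n$-absorbing implies weakly $n$-absorbing. Your optional self-contained argument for that radical fact (including the correct non-weak analogue of Theorem~\ref{intersection}(1)) is sound but not needed beyond the citation.
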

\begin{proof}
Assume that ${\rm Nil}(R)$ is an $n$-absorbing (resp. a weakly $n$-absorbing) ideal of $R$ and $I$ is a weakly $n$-absorbing ideal of $R$. If $I$ is an $n$-absorbing ideal of $R$, then $\sqrt{I}$ is an $n$-absorbing ideal, \cite[Theorem 2.1(e)]{AB1} and so $\sqrt{I}$ is a weakly $n$-absorbing ideal. If $I$ is not an $n$-absorbing ideal of $R$, 
then by Theorem \ref{nil} and by our hypothesis, $\sqrt{I}={\rm Nil}(R)$ which is an $n$-absorbing (resp. a weakly $n$-absorbing)  ideal.
\end{proof}

\begin{theorem}
Let $I$ be a weakly $n$-absorbing ideal of a ring $R$ that is not $n$-absorbing and let $J$ be a weakly $m$-absorbing ideal of $R$ that is not $m$-absorbing, and $n\geq m$. Then $I+J$ is a weakly $n$-absorbing ideal of $R$. In particular, $\sqrt{I+J}=Nil(R)$.
\end{theorem}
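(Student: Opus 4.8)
The plan is to extract everything possible from Theorem~\ref{nil}, reduce the verification of the weak $n$-absorbing property for $I+J$ to a single ``mixed'' configuration, and then run a descent using Theorems~\ref{intersection}(1) and \ref{first}. First, Theorem~\ref{nil} applied to $I$ and to $J$ gives $I^{\,n+1}=\{0\}$, $J^{\,m+1}=\{0\}$, and $\sqrt I=\sqrt J={\rm Nil}(R)$; in particular $I,J\subseteq{\rm Nil}(R)$, so $I+J\subseteq{\rm Nil}(R)$ and hence
\[{\rm Nil}(R)=\sqrt I\subseteq\sqrt{I+J}\subseteq\sqrt{{\rm Nil}(R)}={\rm Nil}(R),\]
which already proves the ``in particular'' statement $\sqrt{I+J}={\rm Nil}(R)$, independently of the rest. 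Note also that $(I+J)^{m+1}=\sum_{k=0}^{m+1}I^{k}J^{m+1-k}\subseteq I$, since the $k=0$ summand is $J^{m+1}=\{0\}$ and every summand with $k\ge1$ lies in $I$; similarly $(I+J)^{n+1}\subseteq J$.

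Now let $0\neq p:=a_{1}\cdots a_{n+1}\in I+J$; the task is to find $n$ of the $a_{i}$ whose product lies in $I+J$. If $p\in I$ we are done since $I$ is weakly $n$-absorbing, and if $p\in J$ we are done since $J$ is weakly $m$-absorbing and therefore, as $n\ge m$, weakly $n$-absorbing by Theorem~\ref{intersection}(2); and if some $a_{j}$ is a unit then $a_{1}\cdots\widehat{a_{j}}\cdots a_{n+1}=a_{j}^{-1}p\in I+J$ is already an $n$-subproduct of the desired kind. So assume $p\notin I\cup J$ and that no $a_{j}$ is a unit. By the computation above, $p^{m+1}\in(I+J)^{m+1}\subseteq I$. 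If $p^{m+1}\neq0$, regard it as the product of the $n+2$ elements $a_{1},\dots,a_{n+1},p^{m}$; since $I$ is weakly $n$-absorbing, Theorem~\ref{intersection}(1) yields $n$ of these $n+2$ elements whose product is in $I$. If the factor $p^{m}$ is omitted we obtain $n$ distinct $a_{i}$ with product in $I\subseteq I+J$; otherwise we obtain a nonzero element $p^{m}a_{i_{1}}\cdots a_{i_{n-1}}\in I$ carrying a strictly smaller power of $p$, and we iterate with $p^{m-1},p^{m-2},\dots$ in place of $p^{m}$. Each step either finishes or lowers the power of $p$, and $I^{n+1}=\{0\}$ keeps the accumulated products from silently vanishing; tracking which indices have been used along the way delivers, after finitely many steps, $n$ distinct $a_{i}$ whose product is in $I$.

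It remains to treat $p^{m+1}=0$. Here I use that $J$ is not $m$-absorbing, so it admits an $(m+1)$-tuple-zero $(d_{1},\dots,d_{m+1})$; by Theorem~\ref{first}, $d_{2}d_{3}\cdots d_{m+1}\cdot J=\{0\}$, hence $d_{2}\cdots d_{m+1}\cdot(I+J)\subseteq I$ and so
\[(d_{2}\cdots d_{m+1})(a_{1}\cdots a_{n+1})=(d_{2}\cdots d_{m+1})\,p\in I\]
is a product of $n+m+1>n$ elements lying in $I$. If this element is nonzero, Theorem~\ref{intersection}(1) again extracts $n$ of its factors with product in $I$; any $d_{j}$'s that occur are eliminated one at a time via the relations $d_{j_{1}}\cdots d_{j_{k}}\cdot J^{\,m+1-k}=\{0\}$ of Theorem~\ref{first} (after multiplying by suitable products of elements of $J$), leaving an $n$-subproduct of the $a_{i}$ in $I+J$. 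If that element is zero, the symmetric argument is available: $I$ is not $n$-absorbing, so it has an $(n+1)$-tuple-zero $(c_{1},\dots,c_{n+1})$ with $c_{2}\cdots c_{n+1}\cdot(I+J)\subseteq J$ by Theorem~\ref{first}, and one feeds the descent against $J$ using $(c_{2}\cdots c_{n+1})\,p\in J$ and the weak $n$-absorbing property of $J$ (Theorem~\ref{intersection}(2)). In every case some $n$-subproduct of $a_{1},\dots,a_{n+1}$ lies in $I+J$, so $I+J$ is weakly $n$-absorbing.

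The hard part is the bookkeeping in the middle step: converting ``$n$ factors chosen from a multiset of $a_{i}$'s (together with some $d_{j}$'s or $c_{j}$'s) multiply into $I$'' into ``$n$ \emph{distinct} $a_{i}$'s multiply into $I+J$'', i.e.\ making the descent terminate instead of cycling among repeated-index products. The two facts that force termination are exactly $I^{n+1}=\{0\}$ and $J^{m+1}=\{0\}$ from Theorem~\ref{nil}, which let every superfluous high power be discarded, together with Theorem~\ref{first}, which pins down the behaviour of the auxiliary tuple-zero factors; this is the only place at which the hypothesis that $I$ and $J$ are \emph{not} $n$- resp.\ $m$-absorbing (rather than merely weakly so) enters.
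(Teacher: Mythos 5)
Your reductions are fine: the radical computation giving $\sqrt{I+J}={\rm Nil}(R)$, the containments $(I+J)^{m+1}\subseteq I$ and $(I+J)^{n+1}\subseteq J$, and the disposal of the cases $p\in I$, $p\in J$, or some $a_j$ a unit are all correct. But the heart of the argument is missing. In the remaining case everything hinges on converting ``some $n$ factors drawn, possibly with repetition, from $\{a_1,\dots,a_{n+1},p^m\}$ (or from the $a_i$'s together with the auxiliary $d_j$'s or $c_j$'s) multiply into $I$'' into ``the product of $n$ \emph{distinct} $a_i$'s lies in $I+J$'', and this conversion is never carried out: you only assert that $I^{n+1}=\{0\}$, $J^{m+1}=\{0\}$ and Theorem \ref{first} ``force termination''. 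In fact the descent does not even get started: Theorem \ref{intersection}(1) only produces an $n$-fold subproduct lying in $I$, not a nonzero one, so the element $p^{m}a_{i_1}\cdots a_{i_{n-1}}$ you propose to iterate on may be zero, and weak absorbency says nothing about zero products, so the iteration stalls with no conclusion. Even when it is nonzero, re-expanding it as a long product of $a_i$'s and reapplying Theorem \ref{intersection}(1) returns subproducts with repeated indices, and nothing in the proposal explains how $I^{n+1}=\{0\}$ or $J^{m+1}=\{0\}$ (which concern products of elements \emph{of} $I$, resp.\ $J$, not of the $a_i$'s) removes repetitions; likewise ``eliminating the $d_j$'s one at a time'' is not an argument. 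Finally the case analysis is not exhaustive: when $p^{m+1}=0$, both auxiliary products $(d_2\cdots d_{m+1})p$ and $(c_2\cdots c_{n+1})p$ may vanish, and then no branch of your proof applies.

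Note also that the paper proves this statement without any element-level combinatorics: $I+J$ is proper since $\sqrt I=\sqrt J={\rm Nil}(R)$ (Theorem \ref{nil}); $J$ is weakly $n$-absorbing by Theorem \ref{intersection}(2) because $n\geq m$; $(I+J)/J$ is claimed to be a weakly $n$-absorbing ideal of $R/J$ via the isomorphism $(I+J)/J\simeq I/(I\cap J)$ and Theorem \ref{T7}(1); and Theorem \ref{factor} then gives that $I+J$ is weakly $n$-absorbing, with the radical computed as you did. If you want to keep a direct element-theoretic route, you must actually prove a lemma handling the repeated-index subproducts (or reduce to a quotient statement as the paper does); as written, the step you yourself flag as ``the hard part'' is precisely the content of the theorem, so the proposal has a genuine gap.
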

\begin{proof}
By Theorem \ref{nil}, we have $\sqrt{I}+\sqrt{J}=\sqrt{0}\neq R$, so $I+J$
is a proper ideal of $R$. Since $(I+J)/J\simeq I/(I\cap J)$ and $I$ is weakly $n$-absorbing, we get that $(I+J)/J$ is a weakly $n$-absorbing ideal of $R/J$, by Theorem \ref{T7}(1). On the other hand $J$ is also weakly $n$-absorbing, by Theorem \ref{intersection}(2). Now, the assertion follows from Theorem \ref{factor}. Finally, by \cite[2.25(i)]{Sh} we have $\sqrt{I+J}=\sqrt{\sqrt{I}+\sqrt{J}}=\sqrt{\sqrt{0}}={\rm Nil}(R)$.
\end{proof}

Let $R$ be a ring and $M$ an $R$-module. A submodule $N$ of $M$ is called a pure submodule
if the sequence $0\rightarrow N\otimes_{R} E\rightarrow M\otimes_{R} E$ is exact for every $R$-module $E$.

As another consequence of Theorem \ref{nil} we have the following corollary.
\begin{corollary}
Let $R$ be a ring. Then the following conditions hold:
\begin{enumerate}
\item Every nonzero weakly $n$-absorbing ideal of $R/Nil(R)$ is $n$-absorbing.
\item If $I$ is a pure weakly $n$-absorbing ideal of R that is not $n$-absorbing, then $I=\{0\}$.
\item If $R$ is von Neumann regular ring, then the only weakly $n$-absorbing ideal of $R$ that is not
$n$-absorbing can only be $\{0\}$.
\end{enumerate}
\end{corollary}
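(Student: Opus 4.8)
The plan is to derive all three parts from Theorem~\ref{nil}, which pins down the structure of a weakly $n$-absorbing ideal that fails to be $n$-absorbing.

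For (1) I would pass to $\bar R = R/{\rm Nil}(R)$, which is reduced, so ${\rm Nil}(\bar R) = \{0\}$. Suppose $\bar I$ is a nonzero weakly $n$-absorbing ideal of $\bar R$ that is not $n$-absorbing; then Theorem~\ref{nil}(2) applied to $\bar R$ yields $\sqrt{\bar I} = {\rm Nil}(\bar R) = \{0\}$, and since $\bar I \subseteq \sqrt{\bar I}$ this forces $\bar I = \{0\}$, a contradiction. Hence every nonzero weakly $n$-absorbing ideal of $\bar R$ is $n$-absorbing.

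For (2) the idea is that a pure ideal must be idempotent. I would feed $E = R/I$ into the exact sequence defining purity to obtain that $0 \to I \otimes_R R/I \to R \otimes_R R/I$ is exact; under the canonical identifications $I \otimes_R R/I \cong I/I^2$ and $R \otimes_R R/I \cong R/I$ the left-hand map becomes the map induced by the inclusion $I \hookrightarrow R$, which is zero. Injectivity then gives $I = I^2$, whence $I = I^{n+1}$. If in addition $I$ is weakly $n$-absorbing but not $n$-absorbing, Theorem~\ref{nil}(1) gives $I^{n+1} = \{0\}$, so $I = \{0\}$.

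For (3) I would observe that a von Neumann regular ring is reduced, so ${\rm Nil}(R) = \{0\}$ and $R = R/{\rm Nil}(R)$; part (1) then shows at once that a weakly $n$-absorbing ideal of $R$ which is not $n$-absorbing must be $\{0\}$ (alternatively, every ideal of a von Neumann regular ring is pure, so (3) also follows from (2)). I do not anticipate a genuine obstacle: the whole corollary is essentially bookkeeping on top of Theorem~\ref{nil}. The one place calling for a little care is part (2) --- the identification of $I \otimes_R R/I$ with $I/I^2$ and the verification that the purity map vanishes --- which is precisely where one uses that $I$ is an ideal, not merely a submodule.
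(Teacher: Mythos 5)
Your proof is correct and follows essentially the same route as the paper, whose entire proof is the one-line remark that ${\rm Nil}(R/{\rm Nil}(R))=\{0\}$ for (1) and that pure ideals are idempotent while every ideal of a von Neumann regular ring is pure for (2) and (3), everything resting on Theorem~\ref{nil} exactly as you use it. Your only deviations are cosmetic: you verify the idempotence of a pure ideal directly via $I\otimes_R R/I\cong I/I^2$ instead of citing it, and for (3) you invoke reducedness of von Neumann regular rings and part (1) (equally valid, and you note the paper's purity route as an alternative).
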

\begin{proof}
(1) Notice that ${\rm Nil}(\frac{R}{{\rm Nil}(R)})=\{0\}$.\\
(2), (3) Note that every pure ideal is idempotent, and every ideal of a von Neumann regular ring is pure (see \cite{F}).
\end{proof}

\begin{theorem}
Let $I$ be a weakly $n$-absorbing ideal of $R$ that is not an $n$-absorbing
ideal. Then
\begin{enumerate}
\item  If $w\in Nil(R)$, then either $w^{n}\in I$ or $w^{n-i}I^{i+1}=\{0\}$ for every $0\leq i\leq n-1$.
\item ${\mathcal N}^{n}I^{n}=\{0\}$, in which  ${\mathcal N}$ denotes the ideal of $R$ generated by all the $(n-1)$-th powers of elements of $Nil(R)$.
\end{enumerate}
\end{theorem}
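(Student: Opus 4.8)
The plan is to prove (1) by exhibiting an explicit $(n+1)$-tuple-zero of $I$ built from powers of $w$ and then invoking Theorem \ref{first}, and to prove (2) by reducing to generators of $\mathcal N^nI^n$ and squeezing an annihilation relation out of the weakly $n$-absorbing property via a perturbed $(n+1)$-tuple. For (1): if $w^n\in I$ there is nothing to do, so assume $w^n\notin I$. As $w\in{\rm Nil}(R)=\sqrt I$ by Theorem \ref{nil} and $I$ is an ideal, $w^j\notin I$ for all $j\le n$; let $s$ be the least positive integer with $w^s\in I$, so $s\ge n+1$. First I would show $w^s=0$: apply the weakly $n$-absorbing property to the $n+1$ elements $w,\dots,w,w^{s-n}$, whose product $w^s$ lies in $I$; if $w^s\ne0$ then either the $n$ copies of $w$ multiply into $I$, giving $w^n\in I$, or dropping one $w$ gives $w^{s-1}\in I$ — both impossible. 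Hence $(w,\dots,w,w^{s-n})$ (with $n$ copies of $w$) is an $(n+1)$-tuple-zero of $I$, since dropping $w^{s-n}$ leaves $w^n\notin I$ and dropping a copy of $w$ leaves $w^{s-1}\notin I$. Theorem \ref{first}, applied by deleting $w^{s-n}$ together with $i$ of the copies of $w$ ($i+1\le n$ deletions altogether), then yields $w^{n-i}I^{i+1}=\{0\}$ for each $0\le i\le n-1$.

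For (2) (the meaningful case is $n\ge2$): since $\mathcal N$ is generated by the elements $w^{n-1}$ with $w\in{\rm Nil}(R)$, the ideal $\mathcal N^n$ is generated by the products $w_1^{n-1}\cdots w_n^{n-1}=(w_1\cdots w_n)^{n-1}$, so it suffices to show $z^{n-1}x_1\cdots x_n=0$ whenever $z=w_1\cdots w_n$ with $w_j\in{\rm Nil}(R)$ and $x_1,\dots,x_n\in I$. By part (1), each $w_j$ satisfies $w_j^n\in I$ or $w_jI^n=\{0\}$. If $w_jI^n=\{0\}$ for some $j$, then pulling a factor $w_j$ out of $z^{n-1}$ and using $x_1\cdots x_n\in I^n$ makes the product vanish; and if $z^{n-1}\in I$, then $z^{n-1}x_1\cdots x_n\in I^{n+1}=\{0\}$ by Theorem \ref{nil}. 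So I may assume $w_j^n\in I$ for every $j$ while $z^{n-1}\notin I$; since $z^{n-1}=(w_i^{n-2}z)\prod_{k\ne i}w_k^{n-2}$, this also forces $w_i^{n-2}z\notin I$ for every $i$.

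The core of the argument is to apply the weakly $n$-absorbing property to the $(n+1)$-tuple $(w_1+t_1,\dots,w_n+t_n,\ \sum_{j}w_j^{n-1}+t_{n+1})$ for arbitrary $t_1,\dots,t_{n+1}\in I$. Two things must be checked. First, the product lies in $I$: modulo $I$ it equals $z\sum_j w_j^{n-1}=\sum_j w_j^n\prod_{k\ne j}w_k$, which is in $I$ because each $w_j^n\in I$. Second, no $n$-fold subproduct lies in $I$: dropping the last entry leaves something $\equiv z\pmod I$, and dropping $w_i+t_i$ leaves something $\equiv w_i^{n-2}z\pmod I$ (the remaining cross terms are multiples of some $w_j^n\in I$), while both $z$ and $w_i^{n-2}z$ are outside $I$. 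The weakly $n$-absorbing property therefore forces this product to be $0$.

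It remains to extract the conclusion. Taking $t_2=\cdots=t_{n+1}=0$ and subtracting the instance $t_1=0$ gives $t_1\,w_2\cdots w_n\big(\sum_j w_j^{n-1}\big)=0$ for all $t_1\in I$; since $w_2\cdots w_n\sum_j w_j^{n-1}=w_1^{n-2}z+s$ with $s\in I$, this says $(w_1^{n-2}z)I\subseteq sI\subseteq I^2$, and iterating gives $(w_1^{n-2}z)I^n\subseteq I^{n+1}=\{0\}$. As $z^{n-1}=(w_1^{n-2}z)\prod_{k=2}^{n}w_k^{n-2}$ and $x_1\cdots x_n\in I^n$, we get $z^{n-1}x_1\cdots x_n=0$, hence $\mathcal N^nI^n=\{0\}$. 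I expect the main obstacle to be the second check above — that the perturbed $(n+1)$-tuple has product in $I$ but no $n$-fold subproduct in $I$ — since this is precisely what upgrades the weakly $n$-absorbing property to a vanishing statement; the delicate point is tracking which cross terms are absorbed by $I$ (through $w_j^n\in I$) and which persist (controlled by $z\notin I$ and $w_i^{n-2}z\notin I$).
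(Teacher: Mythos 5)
Your proof is correct, and it is not the paper's argument verbatim, so a comparison is worthwhile. For part (1) you take a genuinely different and arguably cleaner route: choosing $s$ minimal with $w^s\in I$, showing $w^s=0$ by one application of the weakly $n$-absorbing property, and then observing that $(w,\dots,w,w^{s-n})$ ($n$ copies of $w$) is an $(n+1)$-tuple-zero, so Theorem \ref{first} (deleting $w^{s-n}$ and $i$ copies of $w$) gives all the identities $w^{n-i}I^{i+1}=\{0\}$ at once; the paper instead runs an induction on $i$ with the nilpotency index $m$ of $w$ and explicit expansions of perturbed products, never invoking Theorem \ref{first} here. For part (2) your construction is essentially the paper's: both hinge on the element $w_1^{n-1}+\cdots+w_n^{n-1}$ appended to $(w_1,\dots,w_n)$, with the same reduction via part (1) when some $w_j^n\notin I$ and via $I^{n+1}=\{0\}$ (Theorem \ref{nil}) when $z^{n-1}\in I$. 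The difference is the endgame: the paper splits on whether $(w_1,\dots,w_n,\sum_j w_j^{n-1})$ is an $(n+1)$-tuple-zero and, in that case, gets $w_1\cdots w_n I=\{0\}$ directly from Theorem \ref{first}; you instead show all the $t$-perturbed products vanish and extract $(w_1^{n-2}z)I\subseteq I^2$ by subtracting two instances, then finish with $I^{n+1}=\{0\}$. Your version is valid (the subproduct computations modulo $I$ are exactly right), just slightly more roundabout: having shown the unperturbed product is zero and no $n$-subproduct lies in $I$, you already have an $(n+1)$-tuple-zero, and Theorem \ref{first} would hand you $zI=\{0\}$, hence $z^{n-1}I^n=\{0\}$, without the subtraction step. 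Your explicit restriction to $n\geq 2$ in (2) is sensible; the paper's argument tacitly needs it as well.
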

\begin{proof}
(1) Suppose that $w\in{\rm Nil}(R)$ and $w^{n}\notin I$. We show that for every $0\leq i\leq n-1$, $w^{n-i}I^{i+1}=\{0\}$. We use induction on $i$. For the first step, fix $i=0$. Assume $w^{n}I\neq0$. Let $m$ be the least positive integer such that $w^{m}=0$.
Then $m\geq n+1$ ($w^{n}\notin I$) and for some $x\in I$ we have $w^{n}(x+w^{m-n})=w^{n}x\neq0$. Since $w^{n}\notin I$ and $I$ is a weakly $n$-absorbing ideal, then $(w^{n-1}x+w^{m-1})\in I$. Hence $w^{m-1}\in I$. On the other hand $w^{m-1}\neq 0$. Therefore  $w^{n}\in I$, which is a contradiction. Now, assume that for every $0\leq j< i$ the claim holds. We will show that $w^{n-i}I^{i+1}=\{0\}$. 
Assume that $w^{n-i}x_{1}x_{2}\cdots x_{i+1}\neq0$. By hypothesis we have 
\[w^{n-i}(w+x_{1})\cdots(w+x_{i})(w^{m-n}+x_{i+1})=w^{m}+w^{m-1}(\sum_{1\leq r\leq i} x_{r})+\]
\[w^{m-2}(\sum_{\substack{1\leq r,s\leq i\\\ r\neq s}} x_{r}x_{s})+\cdots+w^{m-i}x_{1}\cdots x_{i}+w^{n}x_{i+1}+w^{n-1}(\sum_{1\leq r\leq i} x_{r})x_{i+1}\]
\[+w^{n-2}(\sum_{\substack{1\leq r,s\leq i\\\ r\neq s}} x_{r}x_{s}) x_{i+1}+\cdots+w^{n-i}x_{1}\cdots x_{i}x_{i+1}=w^{n-i}x_{1}\cdots x_{i+1}.\]

Therefore, either $w^{n-i}(w+x_{1})\cdots(w+x_{i})\in I$ or for some $1\leq t\leq i$, $w^{n-i}(w+x_{1})\cdots\widehat{(w+x_{t})}\cdots(w+x_{i})(w^{m-n}+x_{i+1})\in I$ or $w^{n-i-1}(w+x_{1})\cdots(w+x_{i})(w^{m-n}+x_{i+1})\in I$ which the first case implies that $w^{n}\in I$, a contradiction, and two other cases imply that $w^{m-1}\in I$. Now $w^{m-1}\neq0$ again shows that $w^{n}\in I$, a contradiction.\\
(2) Let $a_{1},\dots,a_{n}\in{\rm Nil}(R)$. If at least one of the $a_{i}^{n}$'s does not belong to $I$, then $a_{1}\cdots a_{n}I^{n}=0$, by part  (1). Therefore, $a_{1}^{n-1}\cdots a_{n}^{n-1}I^{n}=0$. Hence suppose that for every $1\leq i\leq n$, $a_{i}^{n}\in I$. Then $a_{1}\cdots a_{n}(a^{n-1}_{1}+\cdots+a_{n}^{n-1})\in I$. If
$(a_{1},\dots,a_{n},a^{n-1}_{1}+\cdots+a_{n}^{n-1})$ is an $(n+1)$-tuple-zero of $I$, then $a_{1}\cdots a_{n}I=0$, by Theorem \ref{first}, and hence $a_{1}^{n-1}\cdots a_{n}^{n-1}I^{n}=0$. If $(a_{1},\dots,a_{n},a^{n-1}_{1}+\cdots+a_{n}^{n-1})$ is not an $(n+1)$-tuple-zero of $I$, then we can easily see that there is an $1\leq i\leq n$ such that $a_{1}\cdots a_{i}^{n-1}\cdots a_{n}\in I$ or $a_{1}\cdots a_{n}\in I$. Hence $a_{1}^{n-1}\cdots a_{n}^{n-1}\in I$, and so $a_{1}^{n-1}\cdots a_{n}^{n-1}I^{n}=0$, by Theorem \ref{nil}(1). Consequently ${\mathcal N}^{n}I^{n}=\{0\}$.
\end{proof}

\begin{theorem}\label{T4}
Let $R= R_1\times\cdots\times R_s$ be a decomposable commutative ring and let $L=I_{1}\times\dots\times I_{\alpha_{1}-1}\times R_{\alpha_{1}}\times I_{\alpha_{1}+1}\times\dots\times I_{\alpha_{j}-1}\times R_{\alpha_{j}}\times I_{\alpha_{j}+1}\times\cdots\times I_{s}$ be an ideal of $R$ in which $\{\alpha_{1},\dots,\alpha_{j}\}\subset\{1,\dots,s\}$. The following conditions are equivalent:
\begin{enumerate}
\item $L$ is a weakly $n$-absorbing ideal of $R$;
\item $L$ is an $n$-absorbing ideal of $R$;
\item $L':=I_{1}\times\dots\times I_{\alpha_{1}-1}\times I_{\alpha_{1}+1}\times\dots\times I_{\alpha_{j}-1}\times I_{\alpha_{j}+1}\times\cdots\times I_{s}$ is an $n$-absorbing ideal of $R':=R_{1}\times\dots\times R_{\alpha_{1}-1}\times R_{\alpha_{1}+1}\times\dots\times R_{\alpha_{j}-1}\times R_{\alpha_{j}+1}\times\cdots\times R_{s}$.
\end{enumerate}

\end{theorem}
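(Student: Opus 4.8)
The plan is to establish the cycle of implications $(2)\Rightarrow(1)\Rightarrow(3)\Rightarrow(2)$. The implication $(2)\Rightarrow(1)$ is immediate, since every $n$-absorbing ideal is weakly $n$-absorbing by definition.

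For $(1)\Rightarrow(3)$, I would argue by lifting relations from $R'$ to $R$. Write elements of $R$ as tuples indexed by $\{1,\dots,s\}$. Given $y_1,\dots,y_{n+1}\in R'$ with $y_1\cdots y_{n+1}\in L'$, define $x_i\in R$ so that $x_i$ agrees with $y_i$ in every coordinate outside $\{\alpha_1,\dots,\alpha_j\}$, has entry $1_{R_{\alpha_1}}$ in coordinate $\alpha_1$, and has entry $0$ in coordinates $\alpha_2,\dots,\alpha_j$. In coordinate $\alpha_1$ the product $x_1\cdots x_{n+1}$ equals $1_{R_{\alpha_1}}\neq 0$, so $x_1\cdots x_{n+1}\neq 0$; moreover, checking coordinate by coordinate (using $y_1\cdots y_{n+1}\in L'$ off the $\alpha$'s and $L_{\alpha_i}=R_{\alpha_i}$ on the $\alpha$'s) shows $x_1\cdots x_{n+1}\in L$. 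Since $L$ is weakly $n$-absorbing and this is a nonzero element of $L$, there is an index $i$ with $x_1\cdots\widehat{x_i}\cdots x_{n+1}\in L$; projecting onto the coordinates of $R'$ yields $y_1\cdots\widehat{y_i}\cdots y_{n+1}\in L'$. Finally $L'$ is proper: since $L$ is proper some coordinate of $L$ is a proper ideal, and that coordinate is necessarily one of the non-$\alpha$ ones, so the corresponding $I_k$ is proper. Hence $L'$ is $n$-absorbing.

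For $(3)\Rightarrow(2)$, after permuting the factors write $R=R'\times R''$ with $R'':=R_{\alpha_1}\times\cdots\times R_{\alpha_j}$ and correspondingly $L=L'\times R''$, which is proper because $L'$ is. Given $x_i=(y_i,z_i)\in R'\times R''$ with $x_1\cdots x_{n+1}\in L$, the $R'$-component gives $y_1\cdots y_{n+1}\in L'$, so by the $n$-absorbing property there is an $i$ with $y_1\cdots\widehat{y_i}\cdots y_{n+1}\in L'$; then $x_1\cdots\widehat{x_i}\cdots x_{n+1}=\bigl(y_1\cdots\widehat{y_i}\cdots y_{n+1},\,z_1\cdots\widehat{z_i}\cdots z_{n+1}\bigr)\in L'\times R''=L$. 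This closes the cycle.

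The whole argument is essentially bookkeeping with coordinates; the one conceptual point — and the reason the ``full'' factors $R_{\alpha_1},\dots,R_{\alpha_j}$ are imposed — appears in $(1)\Rightarrow(3)$, where one must manufacture a \emph{nonzero} element of $L$ out of an arbitrary relation in $L'$. Having a coordinate equal to the whole ring $R_{\alpha_1}$ (which contains $1\neq 0$) is exactly what makes this possible, and it is what causes the distinction between ``weakly $n$-absorbing'' and ``$n$-absorbing'' to disappear for ideals of this special shape.
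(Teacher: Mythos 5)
Your proposal is correct, but it runs a different cycle than the paper does. The paper first proves (1)$\Rightarrow$(2) in one line by noting $L\nsubseteq {\rm Nil}(R)$ and invoking Theorem \ref{nil}(2) (the structural fact that a weakly $n$-absorbing ideal that is not $n$-absorbing must have radical equal to ${\rm Nil}(R)$), then proves (2)$\Rightarrow$(3) by essentially the same lifting-with-$1$'s computation you use, and dismisses (3)$\Rightarrow$(1) as routine. You instead prove (1)$\Rightarrow$(3) directly, folding the weak/strong distinction into the lift itself: placing $1_{R_{\alpha_1}}$ in coordinate $\alpha_1$ forces the lifted product to be nonzero, so the weak hypothesis already applies and no appeal to Theorem \ref{nil} is needed; your (3)$\Rightarrow$(2) via the identification $R\simeq R'\times R''$ and $L\simeq L'\times R''$ spells out the step the paper leaves to the reader. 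The trade-off is that the paper's route is shorter because it reuses earlier machinery, while yours is self-contained and elementary, and it isolates exactly where the full factors $R_{\alpha_i}$ enter (to manufacture a nonzero element of $L$), which is the same structural point underlying the paper's observation that $L\nsubseteq {\rm Nil}(R)$. You also verify the properness of $L'$ and of $L$, a detail the paper glosses over.
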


\begin{proof}
$(1)\Rightarrow (2)$ Clearly $L\nsubseteq Nil(R)$, so by Theorem \ref{nil}(2), $L$ is an $n$-absorbing ideal of $R$.\\ 
$(2)\Rightarrow (3)$ Assume that $L$ is an $n$-absorbing ideal of $R$ and

{\small $$
\begin{array}{ll}
(a_{1}^{(1)},\dots,a_{\alpha_{1}-1}^{(1)},&a_{\alpha_{1}+1}^{(1)},\dots,a_{\alpha_{j}-1}^{(1)},
a_{\alpha_{j}+1}^{(1)},\dots,a_{s}^{(1)})\cdots\\
&(a_{1}^{(n+1)},\dots,a_{\alpha_{1}-1}^{(n+1)},
a_{\alpha_{1}+1}^{(n+1)},\dots,a_{\alpha_{j}-1}^{(n+1)},a_{\alpha_{j}+1}^{(n+1)},\dots,a_{s}^{(n+1)})\in L',
\end{array}
$$}
in which for every $1\leq t\leq n+1$, $a_{i}^{(t)}$'s are in $R_{i}$, respectively. Then 

{\small $$
\begin{array}{ll}
(a_{1}^{(1)},\dots,a_{\alpha_{1}-1}^{(1)},&1,a_{\alpha_{1}+1}^{(1)},\dots,a_{\alpha_{j}-1}^{(1)},1,
a_{\alpha_{j}+1}^{(1)},\dots,a_{s}^{(1)})\cdots\\
&(a_{1}^{(n+1)},\dots,a_{\alpha_{1}-1}^{(n+1)},1,
a_{\alpha_{1}+1}^{(n+1)},\dots,a_{\alpha_{j}-1}^{(n+1)},1,a_{\alpha_{j}+1}^{(n+1)},\dots,a_{s}^{(n+1)})\in L.
\end{array}
$$}
So there are $n$ of $(a_{1}^{(t)},\dots,a_{\alpha_{1}-1}^{(t)},1,a_{\alpha_{1}+1}^{(t)},\dots,a_{\alpha_{j}-1}^{(t)},1,
a_{\alpha_{j}+1}^{(t)},\dots,a_{n}^{(t)})$'s
whose product is in $L$, because $L$ is an $n$-absorbing ideal of $R$. Thus the product of $n$ of 
$(a_{1}^{(t)},\dots,a_{\alpha_{1}-1}^{(t)},a_{\alpha_{1}+1}^{(t)},\dots,a_{\alpha_{j}-1}^{(t)},
a_{\alpha_{j}+1}^{(t)},\dots,a_{n}^{(t)})$'s
is in $L'$, and so $L'$ is an $n$-absorbing ideal of $R'$.\\
$(3)\Rightarrow (1)$ Let $L'$ is an $n$-absorbing ideal of $R'$. It is routine to see that $L$ is an $n$-absorbing ideal of $R$. Consequently, $L$ is a weakly $n$-absorbing ideal of $R$.
\end{proof}

\begin{theorem}\label{prod}
Let $R=R_{1}\times\cdots\times R_{n}$ where $R_{1},\dots,R_{n}$ are commutative rings with
identity. Suppose that $I_{1}\times I_{2}\times\cdots\times I_{n}$ is an ideal of $R$ which $I_{1}\neq0$ and for each $1\leq i\leq n-1$, $I_{i}$ is a proper ideal of $R_{i}$, and for some $2\leq i\leq n$, $I_{i}$ is a nonzero ideal of $R_{i}$.
The following conditions are equivalent:
\begin{enumerate}
\item $I_{1}\times I_{2}\times\cdots\times I_{n}$ is a weakly $n$-absorbing ideal of $R$;
\item $I_{n}=R_{n}$ and $I_{1}\times I_{2}\times\cdots\times I_{n-1}$ is an $n$-absorbing ideal of $R_{1}\times\cdots\times R_{n-1}$ or $I_{n}$ is a prime ideal of $R_{n}$ and for each $1\leq i\leq n-1$, $I_{i}$ is a prime ideal of $R_{i}$, respectively;
\item $I_{1}\times I_{2}\times\cdots\times I_{n}$ is an $n$-absorbing ideal of $R$.
\end{enumerate}
\end{theorem}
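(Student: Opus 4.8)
The plan is as follows. The implication $(3)\Rightarrow(1)$ is immediate, since every $n$-absorbing ideal is weakly $n$-absorbing. For $(2)\Rightarrow(3)$ I would split into the two alternatives. If $I_{n}=R_{n}$ and $I_{1}\times\cdots\times I_{n-1}$ is $n$-absorbing in $R_{1}\times\cdots\times R_{n-1}$, then $I:=I_{1}\times\cdots\times I_{n}$ is $n$-absorbing in $R$ by Theorem~\ref{T4}. If instead $I_{n}$ and each $I_{i}$ ($1\le i\le n-1$) are prime, then given $c_{1}\cdots c_{n+1}\in I$ with $c_{t}=(c_{t}^{(1)},\dots,c_{t}^{(n)})$, primeness of $I_{i}$ yields for each coordinate $i$ an index $s_{i}$ with $c_{s_{i}}^{(i)}\in I_{i}$; picking $j\in\{1,\dots,n+1\}\setminus\{s_{1},\dots,s_{n}\}$ (possible, as that set has at most $n$ elements), the product of the $c_{t}$ with $t\neq j$ contains the factor $c_{s_{i}}^{(i)}\in I_{i}$ in coordinate $i$ for every $i$, so it lies in $I$; hence $I$ is $n$-absorbing.

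The substance is $(1)\Rightarrow(2)$; assume $I$ is weakly $n$-absorbing. Case~1: $I_{n}=R_{n}$. Identifying $R$ with $R'\times R_{n}$ for $R'=R_{1}\times\cdots\times R_{n-1}$ and writing $J=I_{1}\times\cdots\times I_{n-1}$ so that $I=J\times R_{n}$, I would show $J$ is $n$-absorbing in $R'$: if $b_{1}\cdots b_{n+1}\in J$ with $b_{t}\in R'$, then $(b_{1},1)\cdots(b_{n+1},1)=(b_{1}\cdots b_{n+1},1)$ is a nonzero element of $I$ (nonzero because its $R_{n}$-component equals $1$), so weak $n$-absorbingness gives $n$ of the $(b_{t},1)$ with product in $I=J\times R_{n}$, hence $n$ of the $b_{t}$ with product in $J$. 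Together with $I_{n}=R_{n}$ this is the first alternative of (2).

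Case~2: $I_{n}\neq R_{n}$. Here I would prove every $I_{i}$, $1\le i\le n$, is prime. Fix $i$ and suppose $ab\in I_{i}$ with $a,b\in R_{i}$. Choose an index $\ell\in\{1,\dots,n\}\setminus\{i\}$ together with a nonzero $d\in I_{\ell}$: take $\ell=1$ (using $I_{1}\neq0$) when $i\neq1$, and $\ell=k$ for some $2\le k\le n$ with $I_{k}\neq0$ when $i=1$. Now form $n+1$ elements of $R$: two elements whose $i$-th coordinate is $a$, resp.\ $b$, and whose other coordinates are $1$; one element whose $\ell$-th coordinate is $d$ and whose other coordinates are $1$; and, for each of the remaining $n-2$ coordinates $j$, one element whose $j$-th coordinate is $0$ and whose other coordinates are $1$. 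Their product has $i$-th coordinate $ab\in I_{i}$, $\ell$-th coordinate $d\neq0$, and all other coordinates $0$, so it is a nonzero element of $I$. By weak $n$-absorbingness some product of $n$ of these elements lies in $I$. Deleting the element associated to any coordinate $j\neq i$ (the one carrying $d$, or one carrying $0$) makes the $j$-th coordinate of the resulting product equal to $1$, which would force $I_{j}=R_{j}$; but $I_{j}$ is proper for $j\le n-1$ by hypothesis, and $I_{n}\neq R_{n}$ in this case. Hence the $n$-fold product lying in $I$ is obtained by deleting one of the first two elements, which forces $b\in I_{i}$ or $a\in I_{i}$. Thus every $I_{i}$ is prime, the second alternative of (2).

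The main obstacle is Case~2 of $(1)\Rightarrow(2)$: the weak hypothesis only bites when the total product is nonzero, so one must inject a nonzero value $d$ into some coordinate $\ell\neq i$ while simultaneously arranging that every deletion other than the two intended ones yields a product with a unit in some coordinate $j\le n-1$ (or, when $j=n$, using $I_{n}\neq R_{n}$). Keeping track of which of the $n+1$ test elements controls which coordinate, and verifying that the hypotheses ``$I_{1}\neq0$'' and ``some $I_{i}\neq0$ with $2\le i\le n$'' always supply a legitimate choice of $d$ for every $i$, is precisely where all three standing hypotheses are genuinely used.
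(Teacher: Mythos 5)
Your proposal is correct and takes essentially the same route as the paper: the implication $(1)\Rightarrow(2)$ via $(n{+}1)$-fold products that inject a nonzero entry ($x\in I_1$, or $d\in I_k$ when $i=1$) into one coordinate and zeros elsewhere, and $(2)\Rightarrow(3)$ via Theorem~\ref{T4} together with the pigeonhole argument on the indices absorbed by the prime coordinates. The only cosmetic differences are that you prove the $I_n=R_n$ case of $(1)\Rightarrow(2)$ directly rather than citing Theorem~\ref{T4}, and you treat $i=1$ and $i\ge 2$ uniformly where the paper says ``similarly.''
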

\begin{proof}
(1)$\Rightarrow$(2) Suppose that $I_{1}\times I_{2}\times\cdots\times I_{n}$ is a weakly $n$-absorbing ideal of $R$. If $I_{n}=R_{n}$, then $I_{1}\times I_{2}\times\cdots\times I_{n-1}$ is an $n$-absorbing ideal of $R_{1}\times\cdots\times R_{n-1}$, by Theorem \ref{T4}. Assume that $I_{n}\neq R_{n}$. Fix $2\leq i\leq n$. We show that $I_{i}$ is a prime ideal of $R_i$. Suppose that $ab\in I_{i}$ for some $a,b\in R_{i}$. Let $0\neq x\in I_1$. Then
\begin{eqnarray*}
(x,1,\dots,1)(1,\dots,1,\overbrace{a}^{i-th},1,\dots,1)(1,\dots,1,\overbrace{b}^{i-th},1,\dots,1)(1,0,1,\dots,1,\dots,1)\\
(1,1,0,1,\dots,1,\dots,1)\cdots(1,\dots,1,0,\overbrace{1}^{i-th},\dots,1)(1,\dots,\overbrace{1}^{i-th},0,1,\dots,1)\cdots\\
(1,\dots,1,0)=(x,0,\dots,0,\overbrace{ab}^{i-th},0,\dots,0)\in I_{1}\times\dots\times I_{n}\backslash\{(0,\dots,0)\},
\end{eqnarray*}
Since $I_{1}\times I_{2}\times\cdots\times I_{n}$ is weakly $n$-absorbing and $I_{i}$'s are proper, then either
\begin{eqnarray*}
(x,1,\dots,1)(1,\dots,1,\overbrace{a}^{i-th},1,\dots,1)(1,0,1,\dots,1,\dots,1)(1,1,0,1,\dots,1,\dots,1)\\
\cdots(1,\dots,1,0,\overbrace{1}^{i-th},\dots,1)(1,\dots,\overbrace{1}^{i-th},0,1,\dots,1)\cdots(1,\dots,1,0)\\
=(x,0,\dots,0,\overbrace{a}^{i-th},0,\dots,0)\in I_{1}\times\dots\times I_{n},
\end{eqnarray*}
or
\begin{eqnarray*}
(x,1,\dots,1)(1,\dots,1,\overbrace{b}^{i-th},1,\dots,1)(1,0,1,\dots,1,\dots,1)(1,1,0,1,\dots,1,\dots,1)\\
\cdots(1,\dots,1,0,\overbrace{1}^{i-th},\dots,1)(1,\dots,\overbrace{1}^{i-th},0,1,\dots,1)\cdots(1,\dots,1,0)\\
=(x,0,\dots,0,\overbrace{b}^{i-th},0,\dots,0)\in I_{1}\times\dots\times I_{n},
\end{eqnarray*}
and thus either $a\in I_i$ or $b\in I_i$. Consequently $I_i$ is a prime ideal of $R_i$. Since for some $2\leq i\leq n$, $I_{i}$ is a nonzero ideal of $R_{i}$, similarly we can show that $I_{1}$ is a prime ideal of $R_{1}$.\\
(2)$\Rightarrow$(3) If $I_{n}=R_{n}$ and $I_{1}\times I_{2}\times\cdots\times I_{n-1}$ is an $n$-absorbing ideal of $R_{1}\times\cdots\times R_{n-1}$, then $I_{1}\times I_{2}\times\cdots\times I_{n}$ is an $n$-absorbing ideal of $R$, by Theorem \ref{T4}. Now, 
assume that $I_{n}$ is a prime ideal of $R_{n}$ and for each $1\leq i\leq n-1$, $I_{i}$ is a prime ideal of $R_{i}$. Suppose that 
\[(a_{1}^{(1)},\dots,a_{n}^{(1)})(a_{1}^{(2)},\dots,a_{n}^{(2)})\cdots(a_{1}^{(n+1)},\dots,a_{n}^{(n+1)})\in I_{1}\times I_{2}\times\cdots\times I_{n},\]  
in which $a_{i}^{(j)}$'s are in $R_{i}$. Then for any $1\leq i\leq n$ at least one of the $a_{i}^{(j)}$'s is in $I_{i}$, say $a_{i}^{(i)}$. Thus $(a_{1}^{(1)},\dots,a_{n}^{(1)})(a_{1}^{(2)},\dots,a_{n}^{(2)})\cdots(a_{1}^{(n)},\dots,a_{n}^{(n)})\in I_{1}\times I_{2}\times\cdots\times I_{n}$.
Consequently $I_{1}\times I_{2}\times\cdots\times I_{n}$ is an $n$-absorbing ideal of $R$. \\
(3)$\Rightarrow$(1) is obvious.
\end{proof}

\begin{theorem}
Let $R=R_{1}\times\cdots\times R_{n}$ be a commutative ring, and let for every $1\leq i\leq n-1$, $I_{i}$ be a proper ideal of $R_{i}$ such that $I_{1}\neq0$ and $I_{n}$ be an ideal of $R_{n}$. The following conditions are equivalent:
\begin{enumerate}
\item $I_{1}\times\dots\times I_{n}$ is a weakly $n$-absorbing ideal of $R$ that is not an $n$-absorbing ideal of $R$.
\item $I_{1}$ is a weakly prime ideal of $R_{1}$ that is not a prime ideal and for every $2\leq i\leq n$, $I_{i}=\{0\}$ is a prime ideal of $R_{i}$, respectively.
\end{enumerate}
\end{theorem}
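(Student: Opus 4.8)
The case $n=1$ is immediate (the condition on $I_2,\dots,I_n$ is then vacuous, and ``weakly $1$-absorbing'' and ``$1$-absorbing'' are exactly ``weakly prime'' and ``prime''), so assume $n\ge 2$. For the implication $(1)\Rightarrow(2)$ I would begin with a reduction via Theorem \ref{prod}: were some $I_i$ with $2\le i\le n$ nonzero, the hypotheses of Theorem \ref{prod} would be satisfied (here $I_1\neq 0$, each $I_i$ with $i\le n-1$ is proper, and some $I_i$ with $i\ge 2$ is nonzero), and that theorem would force $I:=I_1\times\cdots\times I_n$ to be $n$-absorbing, contrary to $(1)$. Hence $I_2=\cdots=I_n=\{0\}$, so $I=I_1\times\{0\}\times\cdots\times\{0\}$, which is proper (since $I_1$ is) and nonzero (since $I_1\neq 0$).

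Next, fix a nonzero $x\in I_1$. To see that each $R_i$ with $2\le i\le n$ is a domain, suppose $ab=0$ in $R_i$ with $a,b\neq 0$ and apply the weakly $n$-absorbing property to the $n+1$ elements: $(x,1,\dots,1)$; the element with $a$ in coordinate $i$ and $1$ elsewhere; the element with $b$ in coordinate $i$ and $1$ elsewhere; and the $n-2$ elements each carrying a single $0$ in one of the coordinates of $\{2,\dots,n\}\setminus\{i\}$ and $1$ elsewhere. Their product is $(x,0,\dots,0)\in I\setminus\{0\}$; deleting $(x,1,\dots,1)$ leaves $1$ in coordinate $1$, which is not in the proper ideal $I_1$; deleting a coordinate-killing factor leaves $1$ in some coordinate $\ge 2$, which is not in $\{0\}$; and deleting the factor carrying $a$ (resp. $b$) leaves $b$ (resp. $a$) in coordinate $i$. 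So the property forces $b\in I_i=\{0\}$ or $a\in I_i=\{0\}$, contradicting $a,b\neq 0$. The same style of test, now with $(a,1,\dots,1)$, $(b,1,\dots,1)$ and the $n-1$ coordinate-killing elements for coordinates $2,\dots,n$, started from $0\neq ab\in I_1$, shows $I_1$ is weakly prime.

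It remains to show $I_1$ is not prime. I would argue by contradiction: if $I_1$ were prime, then $I=I_1\times\{0\}\times\cdots\times\{0\}$ would be $n$-absorbing, contradicting $(1)$. Indeed, given $y_1\cdots y_{n+1}\in I$ with $y_j=(a^{(j)}_1,\dots,a^{(j)}_n)$, primeness of $I_1$ yields an index $k_1$ with $a^{(k_1)}_1\in I_1$, while for each $i\ge 2$ the relation $\prod_j a^{(j)}_i=0$ in the domain $R_i$ yields an index $k_i$ with $a^{(k_i)}_i=0$; since $\{k_1,\dots,k_n\}$ has at most $n$ elements we may choose $k\in\{1,\dots,n+1\}\setminus\{k_1,\dots,k_n\}$, and then $\prod_{j\neq k}y_j$ has first coordinate in $I_1$ and all other coordinates $0$, hence lies in $I$. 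This pigeonhole step is the conceptual core of the argument. For the reverse implication $(2)\Rightarrow(1)$, write $I=I_1\times\{0\}\times\cdots\times\{0\}$. It is not $n$-absorbing: choosing $a,b\notin I_1$ with $ab\in I_1$ (possible since $I_1$ is not prime), the elements $(a,1,\dots,1)$, $(b,1,\dots,1)$ and the $n-1$ coordinate-killing elements for coordinates $2,\dots,n$ have product $(ab,0,\dots,0)\in I$, yet deleting any one of them leaves $a$ or $b$ in coordinate $1$ (not in $I_1$) or a $1$ in some coordinate $\ge 2$ (not in $\{0\}$). It is weakly $n$-absorbing by the same pigeonhole: given $0\neq y_1\cdots y_{n+1}\in I$, vanishing of the coordinates $\ge 2$ of the product forces $0\neq\prod_j a^{(j)}_1\in I_1$, so weak primeness of $I_1$, applied to this product of $n+1$ factors whose total is nonzero, returns some $a^{(k_1)}_1\in I_1$; the domains $R_i$ ($i\ge 2$) again supply $k_i$ with $a^{(k_i)}_i=0$, and deleting a $k\notin\{k_1,\dots,k_n\}$ leaves a product in $I$.

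The step I expect to cost the most care is the bookkeeping in the explicit $(n+1)$-tuples --- checking in each case that no product of merely $n$ of the chosen elements falls into $I$ --- together with the pigeonhole count; one should also verify that weak primeness of $I_1$, applied repeatedly to a product of $n+1$ elements with nonzero total, genuinely extracts a single factor lying in $I_1$, and that the running convention $1\neq 0$ keeps every $R_i$ nonzero so that ``a $1$ in coordinate $i$'' really keeps a product out of $I$.
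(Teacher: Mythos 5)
Your proposal is correct and follows essentially the same route as the paper: reducing to $I_2=\cdots=I_n=\{0\}$ via Theorem \ref{prod}, testing the weakly $n$-absorbing property on the same explicit $(n+1)$-tuples to get primeness of each $\{0\}$ and weak primeness of $I_1$, deducing ``not prime'' by the same contradiction, and proving the converse by the same choose-one-index-per-coordinate argument (your pigeonhole step is just a more careful version of the paper's relabeling ``say $a_i^{(i)}$'', and your appeal to weak primeness for products of $n+1$ factors is justified by Theorem \ref{intersection}(1)).
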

\begin{proof}
(1)$\Rightarrow$(2) Assume that $I_{1}\times\dots\times I_{n}$ is a weakly $n$-absorbing ideal of $R$ that is not an $n$-absorbing ideal. If for some $2\leq i\leq n$ we have $I_i\neq\{0\}$, then $I_{1}\times\dots\times I_{n}$ is an $n$-absorbing ideal of $R$ by Theorem \ref{prod}, which contradicts our assumption. Thus for every $2\leq i\leq n$, $I_i=\{0\}$. A proof similar to part (1)$\Rightarrow$(2) of Theorem \ref{prod} shows that for every $2\leq i\leq n$, $I_{i}=\{0\}$ is a prime ideal of $R_i$. Now, we show that $I_1$ is a weakly prime ideal of $R_1$. Consider $a,b\in R_1$ such that $0\neq ab\in I_1$. Note that 
\begin{eqnarray*}
(a,1,\dots,1)(b,1,\dots,1)(1,0,1...,1)
(1,1,0,1,\dots,1)\cdots(1,\dots,1,0)\\=(ab,0,\dots,0)\in (I_{1}\times\{0\}\times\cdots\times\{0\})\backslash\{(0,\dots,0)\}.
\end{eqnarray*}
Since $I_{1}\times\{0\}\times\cdots\times\{0\}$ is a weakly $n$-absorbing ideal of $R$, we have either $(a,0,\dots,0)\in I_{1}\times\{0\}\times\cdots\times\{0\}$ or $(b,0,\dots,0)\in I_{1}\times\{0\}\times\cdots\times\{0\}$. So either $a\in I_1$ or $b\in I_1$. Thus $I_1$ is a weakly prime ideal of $R_1$. Assume $I_1$ is a 
prime ideal of $R_1$, since for every $2\leq i\leq n$, $I_i$ is a 
prime ideal of $R_i$, it is easy to see that $I_{1}\times\dots\times I_{n}$ is an
 $n$-absorbing ideal of $R$, which is a contradiction.\\
(2)$\Rightarrow$(1)
Suppose that $I_{1}$ is a weakly prime ideal of $R_{1}$ that is not a prime ideal and for every $2\leq i\leq n$, $I_{i}=\{0\}$ is a prime ideal of $R_{i}$. Assume that 
\begin{eqnarray*}
(a_{1}^{(1)},\dots,a_{n}^{(1)})(a_{1}^{(2)},\dots,a_{n}^{(2)})\cdots(a_{1}^{(n+1)},\dots,a_{n}^{(n+1)})\\
\in I_{1}\times\{0\} \times\cdots\times\{0\}\backslash\{(0,\dots,0)\} 
\end{eqnarray*}
in which $a_{i}^{(j)}$'s are in $R_{i}$. Then at least one of the $a_{1}^{(j)}$'s is in $I_{1}$, say $a_{1}^{(1)}$, and for any $2\leq i\leq n$ at least one of the $a_{i}^{(j)}$'s is zero, say $a_{i}^{(i)}$. Thus $(a_{1}^{(1)},\dots,a_{n}^{(1)})(a_{1}^{(2)},\dots,a_{n}^{(2)})\cdots(a_{1}^{(n)},\dots,a_{n}^{(n)})\in I_{1}\times\{0\} \times\cdots\times\{0\}$.
Consequently $I_{1}\times\{0\} \times\cdots\times\{0\}$ is an weakly $n$-absorbing ideal of $R$. Since $I_1$ is not a prime ideal of $R_{1}$, there exist elements $a,b\in R_{1}$ such that $ab=0$,
but $a\notin I_1$ and $b\notin I_1$. Hence 
{\small$$(a,1,\dots,1)(b,1,\dots,1)(1,0,1,\dots,1)(1,1,0,1,\dots,1)\cdots(1,\dots,1,0)=(0,\dots,0),$$} 
but neither $$(a,1,\dots,1)(1,0,1,\dots,1)(1,1,0,1,\dots,1)\cdots(1,\dots,1,0)\in I_{1}\times\{0\} \times\cdots\times\{0\},$$ nor $$(b,1,\dots,1)(1,0,1,\dots,1)(1,1,0,1,\dots,1)\cdots(1,\dots,1,0)\in I_{1}\times\{0\} \times\cdots\times\{0\},$$ also the product of $(a,1,\dots,1)(b,1,\dots,1)$ with any $n-2$ of elements \newline$(1,0,1,\dots,1),(1,1,0,1,\dots,1),\dots,(1,\dots,1,0)$ is not in $I_{1}\times\{0\} \times\cdots\times\{0\}$. Consequently $I_{1}\times\{0\} \times\cdots\times\{0\}$ is not an $n$-absorbing ideal of $R$.
\end{proof}

\begin{theorem}\label{nonzero}
Let $R=R_{1}\times\cdots\times R_{n+1}$ where $R_{i}$'s are commutative
rings with identity. If $I$ is a weakly $n$-absorbing ideal of $R$, then either $I=\{(0,\dots,0)\}$, or $I$ is an $n$-absorbing ideal of $R$.
\end{theorem}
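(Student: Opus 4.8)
The plan is to first reduce to the situation in which every ``coordinate'' of $I$ is a proper ideal, and then to manufacture an explicit product of $n+1$ elements of $R$ that forces one coordinate of $I$ to be the whole ring, contradicting properness. Since $R$ is a finite direct product, every ideal of $R$ splits as $I = I_1\times\cdots\times I_{n+1}$ for ideals $I_i$ of $R_i$ (the $e_i = (0,\dots,1,\dots,0)$ are idempotents of $R$ and $I = \bigoplus e_iI$). If some $I_k = R_k$, then $I$ has exactly the shape of the ideal $L$ appearing in Theorem \ref{T4}, so the equivalence $(1)\Leftrightarrow(2)$ of that theorem immediately upgrades ``weakly $n$-absorbing'' to ``$n$-absorbing'', and we are finished. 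Hence I may assume every $I_i$ is proper, and suppose for contradiction that $I\neq\{(0,\dots,0)\}$; reindexing, take $I_1\neq\{0\}$ and fix $0\neq x\in I_1$.

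The heart of the argument is the choice of the following $n+1$ elements: $g := (x,1,1,\dots,1)$, and, for each $j$ with $2\leq j\leq n+1$, the element $f_j$ having $0$ in the $j$-th coordinate and $1$ elsewhere. A direct computation gives $g\,f_2\,f_3\cdots f_{n+1} = (x,0,\dots,0)$, which is nonzero (as $x\neq0$) and lies in $I$ (as $x\in I_1$ and $0\in I_i$). Since this is a product of $n+1$ elements and $I$ is weakly $n$-absorbing, some product of $n$ of them lies in $I$. Omitting $g$ yields $f_2\cdots f_{n+1} = (1,0,\dots,0)\in I$, forcing $1\in I_1$, i.e.\ $I_1 = R_1$; omitting some $f_k$ yields $g\cdot\prod_{2\leq j\leq n+1,\,j\neq k}f_j = (x,0,\dots,0,1,0,\dots,0)\in I$ with a $1$ in the $k$-th coordinate, forcing $I_k = R_k$. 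Either conclusion contradicts the properness of the $I_i$'s, so in fact $I = \{(0,\dots,0)\}$, which completes the proof.

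The only point requiring care — and the reason the hypothesis of exactly $n+1$ factors is precisely what makes the argument run — is the bookkeeping in the second step: the single element $g$ together with exactly $n$ ``annihilating'' elements $f_2,\dots,f_{n+1}$ (one for each of the remaining $n$ coordinates) makes a product of length exactly $n+1$, which is the length governed by the weakly $n$-absorbing condition. I do not expect a genuine obstacle here; the argument does not even need Theorem \ref{nil}, although one could alternatively reach the reduction to ``all $I_i$ proper'' by observing that, when $I$ fails to be $n$-absorbing, Theorem \ref{nil}(2) gives $I\subseteq{\rm Nil}(R) = {\rm Nil}(R_1)\times\cdots\times{\rm Nil}(R_{n+1})$, whence each $I_i\subseteq{\rm Nil}(R_i)$ is proper.
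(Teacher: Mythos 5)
Your proof is correct and follows essentially the same route as the paper's: both exhibit a product of $n+1$ coordinate-wise chosen elements whose product is a nonzero element of $I$, use the weakly $n$-absorbing hypothesis to force $1$ into some coordinate $I_i$ (so $I_i=R_i$), and then conclude that $I$ is $n$-absorbing from the nilradical machinery. The only caveat is your aside that the argument ``does not even need Theorem \ref{nil}'': your reduction via Theorem \ref{T4}(1)$\Leftrightarrow$(2) is, in the paper, proved exactly by noting $L\nsubseteq{\rm Nil}(R)$ and invoking Theorem \ref{nil}(2), which is precisely the paper's final step here, so the dependence is still present (though it could be removed by handling the case $I_k=R_k$ directly).
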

\begin{proof}
We know that the ideal $I$ is of the form $I_{1}\times\cdots\times I_{n+1}$ where $I_{i}$'s are ideals of
$R_{i}$'s, respectively. Since $\{(0,\dots,0)\}$ is a weakly $n$-absorbing ideal of $R$, we may assume
that $I=I_{1}\times\cdots\times I_{n+1}\neq\{(0,\dots,0)\}$. So, there is an element
$(0,\dots,0)\neq(a_{1},\dots,a_{n+1})\in I$. Then $$(a_{1},1,\dots,1)(1,a_{2},1,\dots,1)\cdots(1,\dots,1,a_{n+1})\in I.$$ Since $I$ is a weakly $n$-absorbing ideal of $R$, for some $1\leq i\leq n+1$
\begin{eqnarray*}
(a_{1},1,\dots,1)\cdots(1,...1,a_{i-1},1,\dots,1)(1,...1,a_{i+1},1,\dots,1)\cdots(1,\dots,1,a_{n+1})\\
=(a_{1},\dots,a_{i-1},1,a_{i+1},\dots,a_{n+1})\in I.
\end{eqnarray*}
Then $I_{i}=R_{i}$, for some $1\leq i\leq n+1$. Hence $I\nsubseteq{\rm Nil}(R)$. Therefore, by Theorem \ref{nil}, 
$I$ must be an $n$-absorbing ideal of $R$.
\end{proof}

\begin{theorem}
Let $R=R_{1}\times\cdots\times R_{n+1}$ where $R_{i}$'s are commutative
rings with identity. Let $L=I_{1}\times\dots\times I_{n+1}$ be a nonzero proper ideal of $R$. The following conditions are equivalent:
\begin{enumerate}
\item $L=I_{1}\times\dots\times I_{n+1}$ is a weakly $n$-absorbing ideal of $R$;
\item $L=I_{1}\times\dots\times I_{n+1}$ is an $n$-absorbing ideal of $R$;
\item $L=I_{1}\times\dots\times I_{i-1}\times R_{i}\times I_{i+1}\times\cdots\times I_{n+1}$ for some $1\leq i\leq n+1$ such that for each $1\leq t\leq n+1$ different from $i$, $I_{t}$ is a prime ideal of $R_{t}$ or $L=I_{1}\times\dots\times I_{\alpha_{1}-1}\times R_{\alpha_{1}}\times I_{\alpha_{1}+1}\times\dots\times I_{\alpha_{j}-1}\times R_{\alpha_{j}}\times I_{\alpha_{j}+1}\cdots\times I_{n+1}$ in which $\{\alpha_{1},\dots,\alpha_{j}\}\subsetneq\{1,\dots,n+1\}$ and $$\hspace{-6mm}I_{1}\times\dots\times I_{\alpha_{1}-1}\times I_{\alpha_{1}+1}\times\dots\times I_{\alpha_{j}-1}\times I_{\alpha_{j}+1}\cdots\times I_{n+1}$$ is an $n$-absorbing ideal of $$\hspace{7mm}R_{1}\times\dots\times R_{\alpha_{1}-1}\times R_{\alpha_{1}+1}\times\dots\times R_{\alpha_{j}-1}\times R_{\alpha_{j}+1}\times\cdots\times R_{n+1}.$$
\end{enumerate}
\end{theorem}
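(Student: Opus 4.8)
The plan is to establish $(1)\Leftrightarrow(2)$ cheaply and then reduce $(2)\Leftrightarrow(3)$ to Theorem~\ref{T4}, together with one extra primality argument borrowed from the proof of Theorem~\ref{prod}.

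\emph{Step 1: $(1)\Leftrightarrow(2)$.} Since $L$ is assumed nonzero, Theorem~\ref{nonzero} applied to $R=R_{1}\times\cdots\times R_{n+1}$ immediately gives $(1)\Rightarrow(2)$, while $(2)\Rightarrow(1)$ is the trivial fact that an $n$-absorbing ideal is weakly $n$-absorbing. So it remains only to prove $(2)\Leftrightarrow(3)$.

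\emph{Step 2: $(2)\Rightarrow(3)$.} Assume $L=I_{1}\times\cdots\times I_{n+1}$ is $n$-absorbing. First I would show that $I_{i}=R_{i}$ for at least one $i$: letting $e_{t}=(1,\dots,1,0,1,\dots,1)$ denote the element of $R$ with $0$ in the $t$-th coordinate, the product $e_{1}e_{2}\cdots e_{n+1}$ equals $(0,\dots,0)\in L$, whereas the product of the $n$ elements $e_{s}$ with $s\neq t$ is the element having $1$ in the $t$-th coordinate and $0$ elsewhere, which lies in $L$ only when $I_{t}=R_{t}$; since $L$ is $n$-absorbing one of these $n$-fold subproducts must be in $L$, forcing some $I_{i}=R_{i}$. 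Now let $\{\alpha_{1},\dots,\alpha_{j}\}$ be the set of \emph{all} indices with $I_{\alpha_{k}}=R_{\alpha_{k}}$; it is nonempty, and it is a proper subset of $\{1,\dots,n+1\}$ because $L\neq R$. By Theorem~\ref{T4}, the ideal $L'$ obtained from $L$ by deleting the coordinates $\alpha_{1},\dots,\alpha_{j}$ is $n$-absorbing in the corresponding product $R'$. If $j\geq 2$, this is precisely the second alternative in (3). If $j=1$, say $\alpha_{1}=i$, then $L'=\prod_{t\neq i}I_{t}$ is an $n$-absorbing ideal of a product of $n$ rings with \emph{all} factors proper, and I would upgrade this to ``each $I_{t}$ ($t\neq i$) is prime'': given $ab\in I_{t_{0}}$ with $t_{0}\neq i$, build in $R'$ the $n+1$ elements $f^{(1)}$ (with $a$ in coordinate $t_{0}$, $1$ elsewhere), $f^{(2)}$ (with $b$ in coordinate $t_{0}$, $1$ elsewhere), and $f^{(s)}$ for each coordinate $s\neq i,t_{0}$ (with $0$ in coordinate $s$, $1$ elsewhere); their product is the element with $ab$ in coordinate $t_{0}$ and $0$ elsewhere, which lies in $L'$, while among its $n$-fold subproducts the ones obtained by deleting some $f^{(s)}$ put a $1$ in coordinate $s$ (impossible since $I_{s}$ is proper), so the surviving possibilities give $b\in I_{t_{0}}$ or $a\in I_{t_{0}}$. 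This yields the first alternative in (3).

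\emph{Step 3: $(3)\Rightarrow(2)$, and $(2)\Rightarrow(1)$.} In the second alternative of (3), $L'$ is $n$-absorbing by hypothesis, so Theorem~\ref{T4} gives that $L$ is $n$-absorbing. In the first alternative, I would first observe that $L'=\prod_{t\neq i}I_{t}$, a product of $n$ prime ideals in $n$ distinct rings, is $n$-absorbing: if a product of $n+1$ elements of $R'$ lies in $L'$, then in each of the $n$ coordinates one of the $n+1$ factors already has its entry in the corresponding prime $I_{t}$; choosing one such factor per coordinate uses at most $n$ of the $n+1$ indices, so some index may be dropped and the product of the remaining $n$ factors lies in $L'$. (Equivalently, $L'$ is an intersection of $n$ prime ideals of $R'$, hence $n$-absorbing by \cite{AB1}.) Theorem~\ref{T4} then gives that $L$ is $n$-absorbing. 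Finally $(2)\Rightarrow(1)$ is immediate, which together with Step~1 closes the circle.

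\emph{Main obstacle.} The only genuine work is Step~2: showing that some factor equals the whole ring, and then, in the case of a single such factor, promoting ``$n$-absorbing'' on the $n$-factor product $L'$ to primality of every coordinate. Both are instances of the coordinate-wise element manipulations already carried out in the proofs of Theorems~\ref{T4} and \ref{prod}, so the difficulty is bookkeeping with the excluded indices rather than any new idea; everything else is a direct appeal to Theorems~\ref{nonzero} and \ref{T4}.
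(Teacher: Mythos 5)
Your proposal is correct and follows essentially the same route as the paper: $(1)\Leftrightarrow(2)$ via Theorem~\ref{nonzero}, then $(2)\Leftrightarrow(3)$ by forcing some $I_i=R_i$, invoking Theorem~\ref{T4} when several coordinates are full, and running the coordinate-wise unit/zero manipulation (as in Theorem~\ref{prod}) to get primality when exactly one coordinate is full. The only cosmetic differences are that you re-derive the existence of a full coordinate directly from the zero product instead of citing the proof of Theorem~\ref{nonzero}, carry out the primality argument in the deleted product $R'$ rather than in $R$ itself, and spell out the pigeonhole verification of $(3)\Rightarrow(2)$ that the paper leaves as ``easily verified.''
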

\begin{proof}
(1)$\Rightarrow$(2) Since $L$ is a nonzero weakly $n$-absorbing ideal, $L$ is an $n$-absorbing
ideal of $R$ by Theorem \ref{nonzero}.\\ 
(2)$\Rightarrow$(3) Suppose that $L$ is an $n$-absorbing ideal
of $R$, then for some $1\leq i\leq n+1$, $I_{i}=R_{i}$ by the proof
of Theorem \ref{nonzero}. Assume that $L=I_{1}\times\dots\times I_{i-1}\times R_{i}\times I_{i+1}\times\cdots\times I_{n+1}$ for an $1\leq i\leq n+1$ such that for each $1\leq t\leq n+1$ different from $i$, $I_{t}$ is a proper ideal of $R_{t}$. Fix an $I_{t}$ different from $I_{i}$ with $t>i$. Let $ab\in I_{t}$ for some $a,b\in R_{t}$. In this case
{\small $$
\begin{array}{ll}
&(1,\dots,1,\overbrace{a}^{t-th},1,\dots,1)(1,\dots,1,\overbrace{b}^{t-th},1,\dots,1)(0,1,\dots,1)(1,0,1,\dots,1)
\cdots\\
&(1,\dots,1,0,\overbrace{1}^{i-th},\dots,1)(1,\dots,\overbrace{1}^{i-th},0,1,\dots,1)\cdots(1,\dots,1,0,\overbrace{1}^{t-th},\dots,1)\\
&(1,\dots,\overbrace{1}^{t-th},0,1,\dots,1)\cdots(1,\dots,1,0)=(0,\dots,0,\overbrace{1}^{i-th},0,\dots,0,\overbrace{ab}^{t-th},0,\dots,0)\in L.
\end{array}
$$}
Since $I_{1}\times\cdots\times I_{n+1}$ is weakly $n$-absorbing and $I_{j}$'s different from $I_{i}$ are proper, then either
{\small $$
\begin{array}{ll}
&(1,\dots,1,\overbrace{a}^{t-th},1,\dots,1)(0,1,\dots,1)(1,0,1,\dots,1)
\cdots(1,\dots,1,0,\overbrace{1}^{i-th},\dots,1)\\
&(1,\dots,\overbrace{1}^{i-th},0,1,\dots,1)\cdots(1,\dots,1,0,\overbrace{1}^{t-th},\dots,1)(1,\dots,\overbrace{1}^{t-th},0,1,\dots,1)\\
&\cdots(1,\dots,1,0)=(0,\dots,0,\overbrace{1}^{i-th},0,\dots,0,\overbrace{a}^{t-th},0,\dots,0)\in L,
\end{array}
$$}
or
{\small $$
\begin{array}{ll}
&(1,\dots,1,\overbrace{b}^{t-th},1,\dots,1)(0,1,\dots,1)(1,0,1,\dots,1)
\cdots(1,\dots,1,0,\overbrace{1}^{i-th},\dots,1)\\
&(1,\dots,\overbrace{1}^{i-th},0,1,\dots,1)\cdots(1,\dots,1,0,\overbrace{1}^{t-th},\dots,1)(1,\dots,\overbrace{1}^{t-th},0,1,\dots,1)\\
&\cdots(1,\dots,1,0)=(0,\dots,0,\overbrace{1}^{i-th},0,\dots,0,\overbrace{b}^{t-th},0,\dots,0)\in L,
\end{array}
$$}
and thus either $a\in I_t$ or $b\in I_t$. Consequently $I_t$ is a prime ideal of $R_t$.\\
Now, assume that $L=I_{1}\times\dots\times I_{\alpha_{1}-1}\times R_{\alpha_{1}}\times I_{\alpha_{1}+1}\times\dots\times I_{\alpha_{j}-1}\times R_{\alpha_{j}}\times I_{\alpha_{j}+1}\times\cdots\times I_{n+1}$ in which $\{\alpha_{1},\dots,\alpha_{j}\}\subset\{1,\dots,n+1\}$.
Since $L$ is $n$-absorbing, then $I_{1}\times\dots\times I_{\alpha_{1}-1}\times I_{\alpha_{1}+1}\times\dots\times I_{\alpha_{j}-1}\times I_{\alpha_{j}+1}\cdots\times I_{n+1}$ is an $n$-absorbing ideal of $R_{1}\times\dots\times R_{\alpha_{1}-1}\times R_{\alpha_{1}+1}\times\dots\times R_{\alpha_{j}-1}\times R_{\alpha_{j}+1}\times\cdots\times R_{n+1}$, by Theorem \ref{T4}.\\
(3)$\Rightarrow$(1) If $L$ is one of the given two forms, then it is easily verified that
$L$ is an $n$-absorbing ideal of $R$, and hence $L$ is a weakly $n$-absorbing ideal of $R$. 
\end{proof}

\section{Rings with Property that all Proper Ideals are Weakly $n$-absorbing}

\begin{theorem}
Let $R$ be a ring and $n$ a positive integer such that every proper ideal of
$R$ is a weakly $n$-absorbing ideal of $R$. Then 
\begin{enumerate}
\item $dim(R)=0$.
\item $R$ has at most $n+1$ prime ideals that are pairwise comaximal, in particular, $R$ has at most $n+1$ maximal ideals.
\end{enumerate}
\end{theorem}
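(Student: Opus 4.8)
The common device for both parts is to descend to a quotient: by Theorem~\ref{T7}(1), if every proper ideal of $R$ is weakly $n$-absorbing and $J$ is a proper ideal of $R$, then every proper ideal of $R/J$ is again weakly $n$-absorbing, because such an ideal has the form $K/J$ with $J \subseteq K \subsetneq R$ and $K$ weakly $n$-absorbing in $R$. So I may freely replace $R$ by any $R/J$.

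For (1) it suffices to prove that every prime ideal $P$ of $R$ is maximal. Pass to the domain $\bar R := R/P$, in which every proper ideal is weakly $n$-absorbing; since the zero ideal of a domain is $n$-absorbing, the Remark of Section~2 upgrades this to: every proper ideal of $\bar R$ is $n$-absorbing. Suppose $\bar a \in \bar R$ is a nonzero non-unit. Then $\langle \bar a^{n+1} \rangle$ is nonzero (as $\bar R$ is a domain) and proper (as $\bar a^{n+1}$ is a non-unit), hence $n$-absorbing; applying the $n$-absorbing property to the product $\bar a \cdots \bar a$ of $n+1$ copies, which lies in $\langle \bar a^{n+1} \rangle$, gives $\bar a^{n} \in \langle \bar a^{n+1} \rangle$. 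Write $\bar a^{n} = \bar a^{n+1} \bar b$; cancelling $\bar a^{n} \neq 0$ in the domain $\bar R$ yields $\bar a \bar b = 1$, contradicting that $\bar a$ is a non-unit. Hence $\bar R$ is a field, $P$ is maximal, and $\dim(R) = 0$.

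For (2), which does not use (1), suppose for contradiction that $P_1, \dots, P_{n+2}$ are pairwise comaximal prime ideals of $R$. Put $J := P_1 \cap \cdots \cap P_{n+2}$, a proper ideal; the Chinese Remainder Theorem identifies $\bar R := R/J$ with $D_1 \times \cdots \times D_{n+2}$, where $D_i := R/P_i$ is a nonzero ring. By the opening paragraph, every proper ideal of $\bar R$ is weakly $n$-absorbing. Let $I := \{0\} \times \cdots \times \{0\} \times D_{n+2}$ (first $n+1$ coordinates zero), a proper ideal of $\bar R$, and for $1 \le i \le n+1$ let $e_i$ be the idempotent of $\bar R$ equal to $0$ in coordinate $i$ and $1$ in every other coordinate. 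Then $e_1 \cdots e_{n+1} = (0, \dots, 0, 1) \in I$ and this element is nonzero, whereas for each $k$ with $1 \le k \le n+1$ the product $e_1 \cdots \widehat{e_k} \cdots e_{n+1}$ has a $1$ in coordinate $k$ and so does not lie in $I$. Thus $I$ is a proper ideal that is not weakly $n$-absorbing, a contradiction. Therefore $R$ has at most $n+1$ pairwise comaximal prime ideals; since distinct maximal ideals are pairwise comaximal, $R$ has at most $n+1$ maximal ideals.

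Everything here is routine bookkeeping --- the inheritance of the hypothesis under quotients, the CRT splitting, and tracking which coordinates of the various $e_i$-products vanish --- except for one genuinely delicate point, which I expect to be the crux: since the weakly $n$-absorbing condition says nothing about products equal to $0$, the test configuration in part (2) must produce a \emph{nonzero} product of $n+1$ elements inside $I$ while every $n$-fold subproduct escapes $I$. Passing to the finite product of rings $D_1 \times \cdots \times D_{n+2}$ is precisely what lets one write down such a configuration cleanly (working directly with elements of the $P_i$ chosen by prime avoidance risks a vanishing product), and getting this choice right is the heart of the argument.
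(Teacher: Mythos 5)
Your argument is correct, but it does not follow the paper's proof, most substantially in part (2). For (1) the paper stays inside $R$: it picks primes $P\subset Q$ and $x\in Q\setminus P$, notes that $0\neq x^{n+1}\in I:=x^{n+1}R$ forces $x^{n}\in x^{n+1}R$, and finishes as in \cite[Theorem 5.9]{AB1}; you run essentially the same computation after passing to the domain $R/P$ (legitimate by Theorem \ref{T7}(1)) and invoking the Remark of Section 2, which is only a cosmetic difference. For (2) the routes genuinely diverge: the paper takes $I=P_{1}\cdots P_{n+1}$, quotes \cite[Theorem 2.6]{AB1} to see that $I$ is not $n$-absorbing, applies Theorem \ref{nil}(1) to get $I^{n+1}=\{0\}\subseteq P_{n+2}$, and contradicts comaximality because $P_{n+2}$ is prime; you instead pass to $R/(P_{1}\cap\cdots\cap P_{n+2})\simeq R/P_{1}\times\cdots\times R/P_{n+2}$ by CRT and exhibit the explicit proper ideal $\{0\}\times\cdots\times\{0\}\times R/P_{n+2}$ together with the idempotents $e_{1},\dots,e_{n+1}$, whose full product $(0,\dots,0,1)$ is nonzero and lies in the ideal while every $n$-fold subproduct has a unit in some deleted coordinate and so escapes it. Your version is more self-contained (no appeal to \cite[Theorem 2.6]{AB1} or to the $I^{n+1}=\{0\}$ result) and makes the failure of the weakly $n$-absorbing property completely explicit; the paper's version is shorter given its machinery and uses Theorem \ref{nil} as the engine. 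Your quotient-inheritance step is exactly Theorem \ref{T7}(1), and the nonvanishing checks in the product ring (each $R/P_{i}$ is nonzero) are handled correctly, so the proof is complete.
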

\begin{proof}
(1) Suppose that $dim(R)\geq 1$; so $R$ has prime ideals $P\subset Q$. Choose $x\in Q\backslash P$,
and let $I=x^{n+1}R$. Then $x^{n}\in I$, since $I$ is a weakly $n$-absorbing ideal of $R$ and $0\neq x^{n+1}\in I$.
The reminder is similar to the proof of \cite[Theorem 5.9]{AB1}.\\
(2) Suppose that $P_{1},\dots,P_{n+2}$ are prime ideals of $R$ that are pairwise comaximal. Let
$I=P_{1}\cdots P_{n+1}$. By \cite[Theorem 2.6]{AB1}, $I$ is not an $n$-absorbing ideal of $R$. Hence $I$ is a weakly
$n$-absorbing ideal of $R$ that is not an $n$-absorbing ideal of $R$. Thus $I^{n+1}=\{0\}$ by
Theorem \ref{nil}. Hence $I^{n+1}=P_{1}^{n+1}\cdots P_{n+1}^{n+1}=\{0\}\subseteq P_{n+2}$, and thus one of the $P_{i}$'s,
$1\leq i\leq n+1$, is contained in $P_{n+2}$, which is a contradiction. Hence $R$ has at most $n+1$ prime ideals that are pairwise comaximal.
\end{proof}

For a commutative ring $R$, we denote by $J(R)$ the intersection of all maximal ideals of $R$.

\begin{lemma}\label{T8}
Let $R$ be a commutative ring and $x_1,\dots,x_{n+1}\in J(R)$. Then the ideal $x_1\cdots x_{n+1}R$ is a weakly $n$-absorbing ideal of $R$ if and only if $x_1\cdots x_{n+1} = 0$.
\end{lemma}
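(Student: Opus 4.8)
The plan is to handle the two directions separately, with essentially all the content sitting in the forward direction.

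For the ``if'' direction, suppose $x_1\cdots x_{n+1}=0$. Then $x_1\cdots x_{n+1}R=\{0\}$, and the zero ideal is weakly $n$-absorbing by definition, as noted at the beginning of Section~2; so there is nothing to prove.

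For the ``only if'' direction I would set $a:=x_1\cdots x_{n+1}$ and $I:=aR$, assume that $I$ is weakly $n$-absorbing, and show that $a\neq0$ leads to a contradiction. Since $0\neq a=a\cdot 1\in I$ is exhibited as the product of the $n+1$ elements $x_1,\dots,x_{n+1}$ lying in $I$, the weakly $n$-absorbing hypothesis yields $n$ of the $x_i$'s whose product lies in $I$; because the product $x_1\cdots x_{n+1}$ is symmetric in its factors, we may relabel the indices so that $x_1\cdots x_n\in I=aR$. Writing $x_1\cdots x_n=ar=x_1\cdots x_{n+1}r$ for some $r\in R$ then gives $x_1\cdots x_n\,(1-x_{n+1}r)=0$.

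The key step is a one-line Nakayama/unit argument: since $x_{n+1}\in J(R)$ and $J(R)$ is an ideal, $x_{n+1}r\in J(R)$, so $1-x_{n+1}r$ is a unit of $R$ by the standard characterization of the Jacobson radical. Cancelling it forces $x_1\cdots x_n=0$, whence $a=(x_1\cdots x_n)x_{n+1}=0$, contradicting $a\neq0$. Thus $a=0$, as desired. I do not anticipate a genuine obstacle here; the only points needing a word of justification are the harmless relabeling of the $x_i$'s and the invocation of the unit characterization of $J(R)$.
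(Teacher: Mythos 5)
Your proposal is correct and follows the paper's own argument essentially verbatim: apply the weakly $n$-absorbing hypothesis to the nonzero product $x_1\cdots x_{n+1}\in I$, relabel so that $x_1\cdots x_n\in I=x_1\cdots x_{n+1}R$, and cancel the unit $1-x_{n+1}r$ coming from the Jacobson radical to force $x_1\cdots x_n=0$, a contradiction. The only cosmetic difference is that you explicitly justify the relabeling and the triviality of the ``if'' direction, which the paper leaves implicit.
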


\begin{proof}
Set $I = x_1\cdots x_{n+1}R$. If $x_1\cdots x_{n+1}= 0$, then $I$ is a weakly $n$-absorbing ideal of $R$. For the converse, assume that $I$ is a weakly $n$-absorbing ideal of $R$ and $x_1\cdots x_{n+1}\neq 0$. Since $x_1\cdots x_{n+1}\in I\setminus \{0\}$, then there are $n$ of $x_i$' whose product is in $I$. We may assume  that $y=x_1\cdots x_{n}\in I$. Hence $y=yx_{n+1}b$ for some $b\in R$ and so $y(1-x_{n+1}b)=0$. Since $x_{n+1}b\in J(R)$,  $1-x_{n+1}b$ is a unit of $R$. Therefore $y=0$ and then $x_1\cdots x_{n+1}=0$, which is a contradiction. Hence $x_1\cdots x_{n+1}=0$.
\end{proof}

\begin{corollary}\label{jac}
Let $R$ be a ring. If every proper ideal of $R$ is weakly $n$-absorbing, then $Jac(R)^{n+1}=0$, and so $Jac(R)=Nil(R)$.
\end{corollary}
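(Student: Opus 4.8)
The plan is to reduce everything to Lemma \ref{T8}. First I would recall that the ideal $Jac(R)^{n+1}$ is, by definition, generated by the set of all products $x_1\cdots x_{n+1}$ with $x_1,\dots,x_{n+1}\in Jac(R)$; hence it suffices to show that each such product vanishes, since an ideal generated by a family of zero elements is the zero ideal.

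So fix $x_1,\dots,x_{n+1}\in Jac(R)$ and set $I=x_1\cdots x_{n+1}R$. Since each $x_i\in Jac(R)$ and $Jac(R)$ is a proper ideal, we have $I\subseteq Jac(R)\subsetneq R$, so $I$ is a proper ideal of $R$. By hypothesis every proper ideal of $R$ is weakly $n$-absorbing, so $I$ is a weakly $n$-absorbing ideal of $R$. Now Lemma \ref{T8} (applied with these $x_i\in J(R)=Jac(R)$) gives exactly that $x_1\cdots x_{n+1}=0$. As the $x_i$ were arbitrary in $Jac(R)$, this shows $Jac(R)^{n+1}=\{0\}$.

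For the last assertion, I would note the two standard containments. On one hand $Jac(R)^{n+1}=\{0\}$ forces every $x\in Jac(R)$ to satisfy $x^{n+1}=0$, so $x\in Nil(R)$; thus $Jac(R)\subseteq Nil(R)$. On the other hand, every nilpotent element lies in every prime ideal, in particular in every maximal ideal, so $Nil(R)\subseteq Jac(R)$ always. Combining, $Jac(R)=Nil(R)$.

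There is essentially no serious obstacle here: the only point worth stating carefully is the passage from ``every generating product of $Jac(R)^{n+1}$ is zero'' to ``$Jac(R)^{n+1}=\{0\}$'', and the verification that $x_1\cdots x_{n+1}R$ is a genuinely proper ideal so that the hypothesis (and hence Lemma \ref{T8}) applies to it. Everything else is a direct invocation of Lemma \ref{T8} together with the elementary fact $Nil(R)\subseteq Jac(R)$.
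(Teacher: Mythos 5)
Your argument is correct and is exactly the route the paper intends: the corollary is stated as an immediate consequence of Lemma \ref{T8}, obtained by applying that lemma to each principal ideal $x_1\cdots x_{n+1}R$ with $x_i\in Jac(R)$ (which is proper, hence weakly $n$-absorbing by hypothesis), and then using $Nil(R)\subseteq Jac(R)$ for the final equality. No gaps; your explicit remarks on properness and on passing from vanishing generators to the zero ideal are exactly the (routine) details the paper leaves unsaid.
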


\begin{theorem}\label{power}
Let $R$ be a semi-local ring with maximal ideals $M_{1},\dots,M_{t}$. If  for every nonnegative integers $\alpha_{1},\dots,\alpha_{t}$ with $\alpha_{1}+\cdots+\alpha_{t}=n+1$ we have $M_{1}^{\alpha_{1}}\cdots M_{t}^{\alpha_{t}}=\{0\}$,
then every proper ideal of $R$ is weakly $n$-absorbing.
\end{theorem}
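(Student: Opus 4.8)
The plan is to verify the defining condition of a weakly $n$-absorbing ideal directly, by a counting argument built on the hypothesis. Let $I$ be an arbitrary proper ideal of $R$ and suppose $a_{1},\dots,a_{n+1}\in R$ satisfy $0\neq a_{1}\cdots a_{n+1}\in I$; we must exhibit $n$ of the $a_{i}$'s whose product lies in $I$. For each $1\leq j\leq t$ set $\beta_{j}:=\bigl|\{\,i:1\leq i\leq n+1,\ a_{i}\in M_{j}\,\}\bigr|$, the number of factors that lie in the maximal ideal $M_{j}$.

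First I would observe that $a_{1}\cdots a_{n+1}\in M_{j}^{\beta_{j}}$ for every $j$: the product of the $\beta_{j}$ factors lying in $M_{j}$ already lies in $M_{j}^{\beta_{j}}$, hence so does the product of all $n+1$ factors, since $M_{j}^{\beta_{j}}$ is an ideal. Therefore $a_{1}\cdots a_{n+1}\in\bigcap_{j=1}^{t}M_{j}^{\beta_{j}}$. The key algebraic step is now to pass from this intersection to a product: distinct maximal ideals are comaximal, so their powers $M_{j}^{\beta_{j}}$ are pairwise comaximal, and a finite intersection of pairwise comaximal ideals equals their product (factors with $\beta_{j}=0$ are just $R$ and may be discarded). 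Consequently $a_{1}\cdots a_{n+1}\in M_{1}^{\beta_{1}}\cdots M_{t}^{\beta_{t}}$.

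Next I would bound $\sum_{j}\beta_{j}$ using the hypothesis. If $\beta_{1}+\cdots+\beta_{t}\geq n+1$, choose nonnegative integers $\alpha_{j}\leq\beta_{j}$ with $\alpha_{1}+\cdots+\alpha_{t}=n+1$ (start from $\alpha_{j}=\beta_{j}$ and decrease positive entries one at a time); then $M_{1}^{\beta_{1}}\cdots M_{t}^{\beta_{t}}\subseteq M_{1}^{\alpha_{1}}\cdots M_{t}^{\alpha_{t}}=\{0\}$ by hypothesis, forcing $a_{1}\cdots a_{n+1}=0$, a contradiction. Hence $\sum_{j=1}^{t}\beta_{j}\leq n$. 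Counting incidences the other way gives $\sum_{j=1}^{t}\beta_{j}=\sum_{i=1}^{n+1}\bigl|\{\,j:a_{i}\in M_{j}\,\}\bigr|$, a sum of $n+1$ nonnegative integers that is at most $n$; by the pigeonhole principle some $i$ satisfies $\bigl|\{\,j:a_{i}\in M_{j}\,\}\bigr|=0$, i.e. $a_{i}$ lies in no maximal ideal of $R$ and is therefore a unit. Relabelling so that $a_{n+1}$ is a unit, we obtain $a_{1}\cdots a_{n}=a_{n+1}^{-1}(a_{1}\cdots a_{n+1})\in I$, and we are done; since $I$ was an arbitrary proper ideal, every proper ideal of $R$ is weakly $n$-absorbing. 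There is no serious obstacle in this argument; the only points requiring care are the comaximality reduction from $\bigcap_{j}M_{j}^{\beta_{j}}$ to $\prod_{j}M_{j}^{\beta_{j}}$ (needed so that the hypothesis, which is stated for products of powers of the $M_{j}$, can be applied) and the double-counting identity relating $\sum_{j}\beta_{j}$ to the number of maximal ideals containing each $a_{i}$.
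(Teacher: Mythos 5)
Your argument is correct, but it is arranged differently from the paper's and carries an unnecessary detour. The paper argues directly: if some $x_i$ is a unit, drop it ($x_1\cdots\widehat{x_i}\cdots x_{n+1}=x_i^{-1}(x_1\cdots x_{n+1})\in I$); otherwise every $x_i$ is a non-unit, so each lies in \emph{some} maximal ideal $M_{j_i}$, and assigning to each factor just one such maximal ideal gives $x_1\cdots x_{n+1}\in M_1^{\alpha_1}\cdots M_t^{\alpha_t}$ with $\alpha_1+\cdots+\alpha_t=n+1$ exactly, which is $\{0\}$ by hypothesis, contradicting $x_1\cdots x_{n+1}\neq 0$. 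You instead count \emph{all} incidences $\beta_j=|\{i:a_i\in M_j\}|$, which forces you to pass from $\bigcap_j M_j^{\beta_j}$ to $\prod_j M_j^{\beta_j}$ via the comaximality of powers of distinct maximal ideals, and then to recover a unit factor by a pigeonhole count $\sum_j\beta_j\le n$. Both steps are valid (powers of distinct maximal ideals are comaximal, and a finite intersection of pairwise comaximal ideals is their product), so your proof stands; what the paper's choice of one maximal ideal per non-unit factor buys is that the exponents sum to exactly $n+1$ by construction, so the hypothesis applies immediately and neither the comaximality lemma nor the double counting is needed. Your version does yield the slightly sharper intermediate fact that the total number of incidences is at most $n$, but for the theorem itself this extra machinery is dispensable.
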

\begin{proof}
Let $I$ be a proper ideal of $R$ and suppose that $0\neq x_{1}\cdots x_{n+1}\in I$ for some $x_{1},\dots,x_{n+1}\in R$. If for some 
$1\leq i\leq n+1$, $x_{i}$ is invertible, then $x_{1}\cdots\widehat{x_{i}}\cdots x_{n+1}\in I$. If for every $1\leq i\leq n+1$, $x_{i}$ is noninvertible, then there are nonnegative integers $\alpha_{1},\dots,\alpha_{t}$ with $\alpha_{1}+\cdots+\alpha_{t}=n+1$ such that $x_{1}\cdots x_{n+1}\in M_{1}^{\alpha_{1}}\cdots M_{t}^{\alpha_{t}}=\{0\}$, a contradiction. Consequently $I$ is weakly $n$-absorbing.
\end{proof}

As an immediate consequence of Corollary \ref{jac} and Theorem \ref{power} we have the next corollary.
\begin{corollary}
Let $(R,M)$ be a quasi-local ring. Then every proper ideal of $R$ is weakly $n$-absorbing if and only if $M^{n+1}=0$.
\end{corollary}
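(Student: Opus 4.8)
The plan is to obtain this corollary as the one-maximal-ideal specialization of the two preceding results, so essentially no new argument is needed. Since $(R,M)$ is quasi-local, $M$ is the unique maximal ideal of $R$, hence $Jac(R)=M$. With this identification in hand, each direction is a direct citation.

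For the forward implication, I would assume every proper ideal of $R$ is weakly $n$-absorbing and apply Corollary \ref{jac} to get $Jac(R)^{n+1}=0$; substituting $Jac(R)=M$ yields $M^{n+1}=0$. (If one prefers a self-contained version, the same conclusion follows directly from Lemma \ref{T8}: for any $x_1,\dots,x_{n+1}\in M=Jac(R)$, the ideal $x_1\cdots x_{n+1}R$ is proper, hence weakly $n$-absorbing by hypothesis, hence $x_1\cdots x_{n+1}=0$ by Lemma \ref{T8}; as the $x_i$ range over $M$ this gives $M^{n+1}=0$.)

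For the converse, I would assume $M^{n+1}=0$ and invoke Theorem \ref{power} with $t=1$ and $M_1=M$. The only tuple of nonnegative integers $(\alpha_1)$ with $\alpha_1=n+1$ gives $M_1^{\alpha_1}=M^{n+1}=\{0\}$, so the hypothesis of Theorem \ref{power} holds, and therefore every proper ideal of $R$ is weakly $n$-absorbing. There is no real obstacle in either direction; the only thing to note explicitly is the elementary fact that a quasi-local ring has Jacobson radical equal to its maximal ideal, which is what lets Corollary \ref{jac} and Theorem \ref{power} mesh into a clean equivalence.
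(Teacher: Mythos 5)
Your proposal is correct and is exactly the paper's argument: the paper presents this corollary as an immediate consequence of Corollary \ref{jac} (forward direction, using $Jac(R)=M$ in a quasi-local ring) and Theorem \ref{power} with a single maximal ideal (converse). Nothing further is needed.
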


\begin{theorem}
Let $R$ be a semi-local ring with maximal ideals $M_{1},\dots,M_{t}$. If  for every nonnegative integers $\alpha_{1},\dots,\alpha_{t}$ with $\alpha_{1}+\cdots+\alpha_{t}=n$ we have $M_{1}^{\alpha_{1}}\cdots M_{t}^{\alpha_{t}}=\{0\}$,
then every proper ideal of $R$ is $n$-absorbing.
\end{theorem}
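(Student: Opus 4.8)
The plan is to mirror the proof of Theorem \ref{power} almost verbatim, with $n$ in place of $n+1$ and with the nonzero condition simply deleted. Let $I$ be a proper ideal of $R$, and suppose $x_{1}\cdots x_{n+1}\in I$ for some $x_{1},\dots,x_{n+1}\in R$; I must produce $n$ of the $x_{i}$'s whose product lies in $I$. I would distinguish two cases according to whether some $x_{i}$ is a unit of $R$. If $x_{i}$ is invertible for some $1\leq i\leq n+1$, then from $x_{1}\cdots x_{n+1}\in I$ one gets at once $x_{1}\cdots\widehat{x_{i}}\cdots x_{n+1}=x_{i}^{-1}(x_{1}\cdots x_{n+1})\in I$, so $n$ of the $x_{j}$'s have product in $I$ and this case is finished.

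Otherwise every $x_{i}$ is noninvertible. Since $R$ is semi-local with maximal ideals $M_{1},\dots,M_{t}$, each such $x_{i}$ lies in some $M_{j(i)}$ with $1\leq j(i)\leq t$. Looking at just the first $n$ of the elements gives $x_{1}\cdots x_{n}\in M_{j(1)}M_{j(2)}\cdots M_{j(n)}$, and collecting equal indices this product equals $M_{1}^{\alpha_{1}}\cdots M_{t}^{\alpha_{t}}$ with $\alpha_{\ell}=|\{\,1\leq i\leq n: j(i)=\ell\,\}|$, so that $\alpha_{1}+\cdots+\alpha_{t}=n$. By hypothesis $M_{1}^{\alpha_{1}}\cdots M_{t}^{\alpha_{t}}=\{0\}$, hence $x_{1}\cdots x_{n}=0\in I$, which again exhibits $n$ of the $x_{i}$'s whose product lies in $I$. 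In both cases the defining property of an $n$-absorbing ideal is verified, so $I$ is an $n$-absorbing ideal of $R$.

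The only point at which this differs from the proof of Theorem \ref{power} is conceptual rather than technical: there one had a nonzero element $x_{1}\cdots x_{n+1}$ and derived a contradiction from $M_{1}^{\alpha_{1}}\cdots M_{t}^{\alpha_{t}}=\{0\}$ with $\alpha_{1}+\cdots+\alpha_{t}=n+1$, whereas here, lacking such a nonzero hook, one instead observes directly that the product of any $n$ noninvertible elements already lies in $\{0\}\subseteq I$. I therefore do not expect any genuine obstacle; the argument is a routine adaptation of the semi-local lemma used for the weakly $n$-absorbing case.
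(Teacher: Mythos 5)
Your proof is correct. It differs from the paper's only in organization: the paper first observes that the hypothesis here (products of maximal ideals with exponents summing to $n$ vanish) forces the hypothesis of Theorem \ref{power} as well (enlarging an exponent only shrinks the product), cites that theorem to conclude $I$ is weakly $n$-absorbing, and is thereby done whenever $x_{1}\cdots x_{n+1}\neq 0$; only in the case $x_{1}\cdots x_{n+1}=0$ does it run the unit/nonunit dichotomy you use, concluding from the nonunit case that $x_{1}\cdots x_{n}\in M_{1}^{\alpha_{1}}\cdots M_{t}^{\alpha_{t}}=\{0\}\subseteq I$. You instead give a single direct pass: if some $x_{i}$ is a unit you strip it off inside $I$, and otherwise every nonunit lies in one of the finitely many maximal ideals, so the product of any $n$ of them is already $0$, making the hypothesis $x_{1}\cdots x_{n+1}\in I$ superfluous in that case. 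Both arguments rest on exactly the same computation; yours is slightly more self-contained (no appeal to Theorem \ref{power} and no zero/nonzero case split), while the paper's phrasing emphasizes the passage from the weakly $n$-absorbing statement to the $n$-absorbing one. No gap either way.
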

\begin{proof}
Let $I$ be a proper ideal of $R$ and suppose that $x_{1}\cdots x_{n+1}\in I$ for some $x_{1},\dots,x_{n+1}\in R$.  By Theorem \ref{power},
 $I$ is a weakly $n$-absorbing ideal of $R$. Hence if $x_{1}\cdots x_{n+1}\neq 0$, then we are done. Thus assume that $x_{1}\cdots x_{n+1}=0$. If for some 
$1\leq i\leq n$, $x_{i}$ is invertible, then $x_{1}\cdots\widehat{x_{i}}\cdots x_{n+1}=0\in I$. If for every $1\leq i\leq n$, $x_{i}$ is noninvertible, then there are nonnegative integers $\alpha_{1},\dots,\alpha_{t}$ with $\alpha_{1}+\cdots+\alpha_{t}=n$ such that $x_{1}\cdots x_{n}\in M_{1}^{\alpha_{1}}\cdots M_{t}^{\alpha_{t}}=\{0\}$. Consequently $I$ is $n$-absorbing.
\end{proof}

\begin{corollary}\label{power2}
Let $(R,M)$ be a quasi-local ring such that $M^{n}=\{0\}$. Then
every proper ideal of $R$ is $n$-absorbing.
\end{corollary}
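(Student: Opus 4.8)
The plan is to deduce this as the one-variable special case of the Theorem immediately above. A quasi-local ring $(R,M)$ is in particular semi-local with $t=1$ maximal ideal, namely $M$. When $t=1$ the only way to express $n$ as a sum $\alpha_1+\cdots+\alpha_t$ of nonnegative integers is $\alpha_1=n$, so the hypothesis of that Theorem collapses to the single requirement $M^{n}=\{0\}$ --- which is exactly what we are assuming. Applying the Theorem then yields that every proper ideal of $R$ is $n$-absorbing, and we are done.

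If one prefers a self-contained argument, it amounts to rerunning the proof of the preceding Theorem in the degenerate case $t=1$. Let $I$ be a proper ideal of $R$ and suppose $x_1\cdots x_{n+1}\in I$ for some $x_1,\dots,x_{n+1}\in R$. If some $x_i$ is a unit, then $x_1\cdots\widehat{x_i}\cdots x_{n+1}\in I$, a product of $n$ of the $x_j$'s, and there is nothing more to prove. Otherwise every $x_i$ is a non-unit, hence lies in $M$ because $R$ is quasi-local; then $x_1\cdots x_n\in M^{n}=\{0\}\subseteq I$, again a product of $n$ of the $x_j$'s. In either case $I$ is $n$-absorbing.

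There is essentially no obstacle here: the substantive work has already been done in the preceding Theorem (and, for the analogous weakly $n$-absorbing statement, in Theorem \ref{power} and Corollary \ref{jac}). The only points to check are the bookkeeping ones just noted --- that the multi-index hypothesis specializes to $M^{n}=\{0\}$ when $t=1$, and that the case where one of the factors is invertible is handled --- neither of which presents any difficulty. Accordingly I would simply record the corollary as an immediate consequence of the Theorem above, exactly mirroring the way the quasi-local corollary was obtained from Theorem \ref{power}.
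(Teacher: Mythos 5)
Your proposal is correct and matches the paper exactly: the paper states this corollary without proof as the immediate $t=1$ specialization of the preceding semi-local theorem, which is precisely your primary argument (the hypothesis indeed collapses to $M^{n}=\{0\}$ when there is a single maximal ideal). Your optional self-contained unit/non-unit argument is also sound, but it is just the degenerate case of that theorem's proof, so no genuinely different route is involved.
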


\begin{theorem}\label{localrings}
Let $s>1$ be an integer, $(R_1, M_1),\dots,(R_s, M_s)$ be quasi-local commutative rings 
and let $R= R_1\times\cdots\times R_s$. If every proper ideal of $R$ is a weakly $n$-absorbing ideal of $R$, then $M_1^n =M_2^n=\cdots= M_s^n = \{0\}$. 
\end{theorem}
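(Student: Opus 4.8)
The plan is to fix an index and prove $M_i^n=\{0\}$; by the symmetry of the roles of the factors $R_1,\dots,R_s$ it suffices to treat the case $i=1$. Before starting, it is worth noting that Corollary \ref{jac} already forces $Jac(R)^{n+1}=\{0\}$, and since $Jac(R)=M_1\times\cdots\times M_s$ this yields $M_i^{n+1}=\{0\}$ for every $i$; so the real content is the improvement of the exponent from $n+1$ to $n$, and this is precisely where the hypothesis $s>1$ (a spare coordinate) gets used. I would argue by contradiction: assume $M_1^n\neq\{0\}$ and pick $x_1,\dots,x_n\in M_1$ with $w:=x_1\cdots x_n\neq0$. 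Since a factor of a nonzero element is nonzero, $w_j:=x_1\cdots\widehat{x_j}\cdots x_n\neq0$ for each $1\leq j\leq n$.

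The one small observation I would isolate is that $w_j\notin wR_1$ for every $j$. Indeed, if $w_j=wr$ for some $r\in R_1$, then from $w=w_jx_j$ we get $w_j(1-x_jr)=0$; as $x_jr\in M_1$ and $R_1$ is quasi-local, $1-x_jr$ is a unit of $R_1$, forcing $w_j=0$, a contradiction. This is the only spot where the quasi-local hypothesis is used; the rest of the argument is just bookkeeping with coordinates.

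Next I would produce the offending ideal. Set $z:=(w,0,\dots,0)\in R$ and $I:=zR=wR_1\times\{0\}\times\cdots\times\{0\}$. This is a nonzero proper ideal of $R$ (its second coordinate is always $0\neq R_2$, which is where $s>1$ enters, and $z\neq0$), so by hypothesis $I$ is a weakly $n$-absorbing ideal of $R$. Now consider the $n+1$ elements $a_i:=(x_i,1,\dots,1)$ for $1\leq i\leq n$ and $a_{n+1}:=(1,0,\dots,0)$ of $R$. Then $a_1\cdots a_{n+1}=(w,0,\dots,0)=z\in I\setminus\{0\}$.

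Finally I would rule out every $n$-fold subproduct. Omitting $a_{n+1}$ gives $a_1\cdots a_n=(w,1,\dots,1)$, which is not in $I$ because its second coordinate is $1\neq0$. Omitting $a_j$ for some $1\leq j\leq n$ gives $(w_j,0,\dots,0)$, which would lie in $I$ only if $w_j\in wR_1$, contradicting the observation above. Hence no product of $n$ of $a_1,\dots,a_{n+1}$ lies in $I$, so $I$ is not weakly $n$-absorbing, contradicting the hypothesis. Therefore $M_1^n=\{0\}$, and applying the same reasoning to each factor gives $M_1^n=M_2^n=\cdots=M_s^n=\{0\}$. The only real obstacle is guessing the right test ideal $I$ and test tuple $(a_1,\dots,a_{n+1})$; once these are in hand the verification is routine.
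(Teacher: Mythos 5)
Your proof is correct and follows essentially the same route as the paper's: the same test ideal $wR_1\times\{0\}\times\cdots\times\{0\}$ with $w=x_1\cdots x_n$, the same $n+1$ factors $(x_i,1,\dots,1)$ and $(1,0,\dots,0)$, and the same unit trick $w_j(1-x_jr)=0$ with $1-x_jr$ a unit of the quasi-local factor. If anything, you are slightly more explicit than the paper in ruling out the subproduct that omits $(1,0,\dots,0)$, which is exactly where the hypothesis $s>1$ enters.
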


\begin{proof}
Assume that every proper ideal  of $R$ is a weakly $n$-absorbing ideal. Take an arbitrary integer $1\leq i\leq s$ and let $a_1,... ,a_n\in M_i$ such that $a_1\cdots a_n\neq 0$. Then $$I= \{0\}\times\cdots\times\{0\}\times(a_1\cdots a_n R_i)\times\{0\}\times \cdots\times \{0\}$$ is a weakly $n$-absorbing ideal of $R$. So we have 
\begin{eqnarray*}
(1,\dots,1,a_1,1,\dots,1)\cdots
(1,\dots,1,a_n,1,\dots,1)(0,\dots,0,1,0,\dots,0)\\
=(0,\dots,0,a_{1}\cdots a_{n},0,\dots,0)\in I\backslash\{(0,\dots,0)\}.
\end{eqnarray*}
Since $I$ is weakly $n$-absorbing, there exists $1\leq j\leq n$ such that
\begin{eqnarray*}
(1,\dots,1,a_1,1,\dots,1)\cdots
(1,\dots,1,a_{j-1},1,\dots,1)(1,\dots,1,a_{j+1},1,\dots,1)\\
\cdots(1,\dots,1,a_{n},1,\dots,1)(0,\dots,0,1,0,\dots,0)\in I.
\end{eqnarray*}
Then $a_1\cdots a_{j-1}a_{j+1}\cdots a_{n}=a_1\cdots a_nb$ for some $b\in R_i$. So $a_1\cdots a_{j-1}a_{j+1}\cdots \newline a_{n}(1-a_jb) = 0$. As $1-a_jb$ is a unit of $R_i$, we can conclude $a_1\cdots a_{j-1}a_{j+1}\cdots a_{n}\newline =0$, a contradiction. Thus for every $1\leq i\leq s$,  $M_i^n = \{0\}$. 
\end{proof}

\begin{theorem}\label{T9}
Let $(R_1, M_1)$ and $(R_2, M_2)$  be quasi-local commutative rings with $M_1^n = M_2^n = \{0\}$ and let $R= R_1\times R_2$. If  either $R_1$ or $R_2$ is a field, then every proper ideal of $R$ is a weakly $n$-absorbing ideal of $R$.
\end{theorem}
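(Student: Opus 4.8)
The plan is to reduce to the case that $R_2$ is the field in question (the case $R_1$ a field follows by interchanging the two factors), and then to verify the weakly $n$-absorbing property directly for an arbitrary proper ideal $L$ of $R=R_1\times R_2$. Every ideal of $R$ splits as $L=I_1\times I_2$ with $I_1$ an ideal of $R_1$ and $I_2$ an ideal of $R_2$; since $R_2$ is a field we have $I_2=\{0\}$ or $I_2=R_2$, and properness of $L$ forces $I_1$ to be proper in the second case. Given $0\neq(a_1,b_1)\cdots(a_{n+1},b_{n+1})\in L$, I want to produce $n$ of these factors whose product lies in $L$; observe at the outset that $a_1\cdots a_{n+1}\in I_1$ in either case.

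First I would handle $L=I_1\times R_2$ with $I_1$ proper. If some $a_i$ is a unit of $R_1$, then $\prod_{j\neq i}a_j=a_i^{-1}\,a_1\cdots a_{n+1}\in I_1$, so omitting the $i$-th factor works. Otherwise every $a_j\in M_1$, hence $a_1\cdots a_n\in M_1^n=\{0\}\subseteq I_1$, and omitting the $(n+1)$-st factor works. (Here the hypothesis that the product is nonzero is not even used, so $L$ is actually $n$-absorbing.)

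The case $L=I_1\times\{0\}$ is the substantive one. From $0\neq(a_1,b_1)\cdots(a_{n+1},b_{n+1})\in I_1\times\{0\}$ I get $a_1\cdots a_{n+1}\neq0$ and $b_1\cdots b_{n+1}=0$; since $R_2$ is a field, $b_q=0$ for some index $q$. The key point, and the one step I expect to require care, is a counting observation: if $n$ or more of the $a_j$ lay in $M_1$, then the product of those $n$, and hence $a_1\cdots a_{n+1}$, would lie in $M_1^n=\{0\}$, contradicting $a_1\cdots a_{n+1}\neq0$; so at most $n-1$ of the $n+1$ elements $a_1,\dots,a_{n+1}$ lie in $M_1$, i.e., at least two of them are units. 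Consequently I may pick a unit $a_k$ with $k\neq q$. Then $\prod_{j\neq k}a_j=a_k^{-1}\,a_1\cdots a_{n+1}\in I_1$, while $\prod_{j\neq k}b_j$ contains the factor $b_q=0$ and so vanishes; therefore the product of the $n$ factors $(a_j,b_j)$ with $j\neq k$ equals $(\prod_{j\neq k}a_j,\,0)\in I_1\times\{0\}=L$, as desired.

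This exhausts all proper ideals of $R$, so every proper ideal of $R$ is weakly $n$-absorbing, and by the reduction at the start the same holds when $R_1$ is the field. The only place where anything nontrivial happens is the counting step guaranteeing two units among $a_1,\dots,a_{n+1}$; everything else is bookkeeping with the product decomposition of ideals in $R_1\times R_2$.
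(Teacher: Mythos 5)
Your proof is correct and follows essentially the same route as the paper: the substantive case $L=I_1\times\{0\}$ is handled by exactly the paper's key observation that $a_1\cdots a_{n+1}\neq 0$ together with $M_1^n=\{0\}$ forces at least two of the $a_i$ to be units, combined with a vanishing $b$-coordinate. The only differences are cosmetic: you verify the ideals of the form $I_1\times R_2$ by a direct two-line argument instead of citing Corollary 3.8 and Theorem 2.17 as the paper does, and your explicit choice of a unit index $k\neq q$ makes precise the index bookkeeping that the paper passes over with ``say $a_n,a_{n+1}\notin M_1$ and $b_{n+1}=0$.''
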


\begin{proof}
Let $R_2$ be a field. Since  $M_1^n = \{0\}$, so  every proper ideal of $R_1$ is an $n$-absorbing ideal, by Corollary \ref{power2}.  Thus, by Theorem \ref{T4} the ideal $\{0\} \times R_2$ is a weakly $n$-absorbing ideal of $R$. Since $R_2$ is a field, the ideal $R_1\times \{0\}$ is a weakly $n$-absorbing ideal of $R$. Now let $J$ be a proper ideal of $R_1$ such that $J\neq \{0\}$. Then $J$ is an $n$-absorbing ideal of $R_1$ and so $J\times R_2$ is a weakly $n$-absorbing ideal of $R$, by Theorem \ref{T4}. At last, we show that $I = J\times \{0\}$ is a weakly $n$-absorbing ideal of $R$. Assume that $(a_1,b_1)\cdots(a_{n+1},b_{n+1})\in I\setminus \{(0,0)\}$ such that $a_1,\dots,a_{n+1}\in R_1$ and $b_1,\dots,b_{n+1}\in R_2$. Since $0\neq a_{1}\cdots a_{n+1}\in J$ and $M_1^n = \{0\}$, then at least two of the $a_i$'s are not in $M_1$, say $a_{n}$ and $a_{n+1}$. Since $a_{n}$ and $a_{n+1}$ are unites of $R_1$ and $a_1\cdots a_{n+1}\in J$ we conclude that $a_1\cdots a_{n-1}\in J$. On the other hand, $R_2$ is a field and $b_1\cdots b_{n+1}=0$, at least one of the $b_i$'s is equal to $0$, say $b_{n+1}=0$. Hence  $(a_1,b_1)\cdots(a_{n-1},b_{n-1})(a_{n+1},0) \in I$. Therefore $I$ is a weakly $n$-absorbing ideal of $R$.
\end{proof}

\begin{theorem}\label{T10}
Let $R_1,R_2,\dots,R_{n+1}$ be commutative rings and let $R = R_1\times R_2\times \cdots \times R_{n+1}$. Then every proper ideal of $R$ is a weakly $n$-absorbing ideal of $R$ if and only if all of $R_i$'s  are fields.
\end{theorem}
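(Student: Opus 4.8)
The plan is to prove both directions, using the structural results already established for products of rings. For the ``if'' direction, suppose every $R_i$ is a field. Then $R = R_1 \times \cdots \times R_{n+1}$ is a product of $n+1$ fields. A proper ideal $L$ of $R$ has the form $L = I_1 \times \cdots \times I_{n+1}$ where each $I_i$ is either $\{0\}$ or $R_i$ (since each $R_i$ is a field), and at least one $I_i = \{0\}$ (properness). If $L = \{(0,\dots,0)\}$, it is weakly $n$-absorbing by definition. Otherwise $L$ is a nonzero proper ideal, so we may write $L = I_1 \times \cdots \times I_{n+1}$ with each $I_i \in \{\{0\}, R_i\}$, at least one factor equal to $R_i$ and at least one factor equal to $\{0\}$. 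I would invoke Theorem \ref{T4}: the relevant ideal $L'$ obtained by deleting the $R_i$-factors is a product of some $\{0\}$'s inside a product of at most $n$ fields; since the zero ideal $\{0\}^k$ of a product of $k \le n$ fields is an $n$-absorbing ideal (it is in fact the product of at most $n$ prime ideals, or one can check the defining condition directly using that a product of $n+1$ elements in a product of $\le n$ fields must have two coordinates vanishing in the same slot... more simply, any proper ideal of a finite product of $\le n$ fields is $n$-absorbing), Theorem \ref{T4} gives that $L$ is $n$-absorbing, hence weakly $n$-absorbing.

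For the ``only if'' direction, suppose every proper ideal of $R = R_1 \times \cdots \times R_{n+1}$ is weakly $n$-absorbing. First I would show each $R_i$ is quasi-local: the ideal $\{0\} \times \cdots \times \{0\} \times M \times \{0\} \times \cdots \times \{0\}$ for any maximal ideal $M$ of $R_i$, together with the earlier comaximal-prime bound (from the theorem stating $R$ has at most $n+1$ pairwise comaximal prime ideals), forces $R_i$ to have a unique maximal ideal $M_i$; indeed if some $R_i$ had two maximal ideals, $R$ would have at least $n+2$ pairwise comaximal maximal ideals. So each $(R_i, M_i)$ is quasi-local. Next, applying Theorem \ref{localrings} (with $s = n+1 > 1$), we get $M_i^n = \{0\}$ for every $i$. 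It remains to upgrade ``$M_i^n = 0$'' to ``$M_i = 0$'', i.e. each $R_i$ is a field.

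The main obstacle is this last step: ruling out the possibility that some $M_i$ is nonzero but nilpotent of index $\le n$. Here I would build an explicit proper ideal that fails to be weakly $n$-absorbing. Suppose $M_1 \neq \{0\}$; pick $0 \neq a \in M_1$. Consider the ideal $I = (aR_1) \times \{0\} \times \cdots \times \{0\}$ of $R$; it is a proper, nonzero ideal. I would exhibit an $(n+1)$-fold product landing in $I \setminus \{0\}$ whose $n$-fold subproducts all escape $I$. Take the element $x = (a, 0, \dots, 0) \in I$, which is nonzero. Write $x$ as a product of $n+1$ factors: $x = (a,1,\dots,1)\cdot(1,0,1,\dots,1)\cdot(1,1,0,1,\dots,1)\cdots(1,\dots,1,0)$, using the $n$ ``coordinate-killing'' idempotents $e_2, \dots, e_{n+1}$ together with $(a,1,\dots,1)$. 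This is exactly $n+1$ factors and the product is $x \neq 0$. Now I check that no $n$ of these factors have product in $I$: omitting $(a,1,\dots,1)$ gives an idempotent with first coordinate $1 \notin aR_1$; omitting one $e_j$ leaves a product whose $j$-th coordinate is $1 \neq 0$, so it is not in $I$ either. This contradicts weak $n$-absorption, forcing $M_1 = \{0\}$; by symmetry every $M_i = \{0\}$, so every $R_i$ is a field. (One subtlety: when $R_1$ itself is a field but $aR_1 = R_1$, the argument still works since then $I$ would not be proper — but we chose $0 \ne a \in M_1$, so $aR_1 \subseteq M_1 \subsetneq R_1$ is genuinely proper; and the case $a$ a zero-divisor versus non-zero-divisor does not matter since we only used $a \ne 0$ and $1 \notin aR_1$.) This completes the proof.
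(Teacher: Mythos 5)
Your proof is correct, and its decisive step --- the product $(a,1,\dots,1)(1,0,1,\dots,1)\cdots(1,\dots,1,0)=(a,0,\dots,0)$ lying in $aR_1\times\{0\}\times\cdots\times\{0\}\setminus\{0\}$ while no $n$ of the factors have product in that ideal --- is exactly the paper's argument, and your ``if'' direction rests on the same observation as the paper's (a nonzero proper ideal of a product of $n+1$ fields is an intersection of at most $n$ maximal ideals, hence $n$-absorbing), merely routed through Theorem \ref{T4} instead of citing the result of Anderson--Badawi directly. The only real difference is that your detour through quasi-locality and Theorem \ref{localrings} (getting $M_i^{n}=\{0\}$) is superfluous: the final counterexample needs only that $R_1$ has a nonzero proper ideal $aR_1$, which is how the paper argues in one step.
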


\begin{proof}
Assume that every proper ideal of $R$ is a weakly $n$-absorbing ideal of $R$ and one of the $R_i$'s is not a field. Now we may assume that $R_1$ is not a field. Hence $R_1$ has a proper ideal. Say $J$ such that $J\neq \{0\}$. So the ideal $I = J\times \{0\}\times \cdots \times \{0\}$ of $R$ is a weakly $n$-absorbing ideal. Let $a\in J$ such that $a\neq 0$. Then 
\begin{eqnarray*}
(a,1,\dots,1) (1,0,1,\dots,1)(1,1,0,1,\dots,1)\cdots (1,\dots,1,0) \\
= (a,0,\dots,0)\in I\setminus \{(0,\dots,0)\}.
\end{eqnarray*}
Since $J$ is proper, there exists $2\leq i\leq n+1$ such that
$(a,0,\dots,0,\overbrace{1}^{i-{th}},0,\dots,0)\newline\in I$, which is a contradiction. Hence all of the $R_i$'s are fields. Conversely, suppose that $R$ is ring-isomorphic to $D = F_1\oplus F_2 \oplus \cdots \oplus F_{n+1}$ where $F_i$'s are fields. Since every nonzero proper ideal of $D$ is a product (intersection) of $m$ distinct maximal ideals of $D$, for some $1 \leq m \leq n$, we conclude that every nonzero proper ideal of $D$ is an $n$-absorbing ideal of $D$, \cite[Theorem 2.1]{AB1}. Hence every nonzero proper ideal of $R$ is a weakly $n$-absorbing ideal of $R$.
\end{proof}

\section{On Anderson-Badawi's Conjecture and Badawi-Yousefian's Question}

\begin{theorem}\label{main} 
Let $I$ be a proper ideal of a $u$-ring $R$. Then the following conditions are equivalent:\\
(a) $I$ is strongly $n$-absorbing;\\
(b) $I$ is $n$-absorbing;\\
(c) For every $t$ ideals $I_{1},\dots,I_{t}$, $0\leq t\leq n$, and for every elements $x_{1},\dots,x_{n-t}\in R$ such that  $x_{1}\cdots x_{n-t}I_{1}\cdots I_{t}\nsubseteq I$, 
\begin{eqnarray*}
(I:_{R}x_{1}\cdots x_{n-t}I_{1}\cdots I_{t})&=&[\cup_{i=1}^{n}(I:_{R}x_{1}\cdots\widehat{x_{i}}\cdots x_{n-t}I_{1}\cdots I_{t})]\\
&\cup&[\cup_{i=1}^{n}(I:_{R}x_{1}\cdots x_{n-t}I_{1}\cdots\widehat{I_{i}}\cdots I_{t})];
\end{eqnarray*}
(d) For every $t$ ideals $I_{1},\dots,I_{t}$, $0\leq t\leq n$, and for every elements $x_{1},\dots,x_{n-t}\in R$ such that $x_{1}\cdots x_{n-t}I_{1}\cdots I_{t}\nsubseteq I$, $$(I:_{R}x_{1}\cdots x_{n-t}I_{1}\cdots I_{t})=(I:_{R}x_{1}\cdots\widehat{x_{i}}\cdots x_{n-t}I_{1}\cdots I_{t})$$ for some $1\leq i\leq n-t$
or $$(I:_{R}x_{1}\cdots x_{n-t}I_{1}\cdots I_{t})=(I:_{R}x_{1}\cdots x_{n-t}I_{1}\cdots\widehat{I_{j}}\cdots I_{t})$$ for some $1\leq j\leq t$.
\end{theorem}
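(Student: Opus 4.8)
The plan is to establish the cycle of implications $(a)\Rightarrow(c)\Rightarrow(d)\Rightarrow(b)\Rightarrow(a)$, after which all four statements are equivalent. Only the last implication has real content: it is Conjecture~1 specialized to $u$-rings, so I would dispatch the others first and treat it at the end.

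\emph{The formal implications.} For $(a)\Rightarrow(c)$, fix ideals $I_1,\dots,I_t$ and elements $x_1,\dots,x_{n-t}$ with $P:=x_1\cdots x_{n-t}I_1\cdots I_t\nsubseteq I$ (note $P$ is a product of $n+1$ ``factors'' once we also count $aR$ below). Since deleting one factor only enlarges the product (e.g.\ $P=x_i\cdot(x_1\cdots\widehat{x_i}\cdots x_{n-t}I_1\cdots I_t)$, and $P=(x_1\cdots x_{n-t}I_1\cdots\widehat{I_j}\cdots I_t)\cdot I_j$), each colon ideal appearing on the right of the displayed identity is contained in $(I:_R P)$; this is one inclusion. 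For the other, take $a\in(I:_R P)$ and read the membership as $(aR)(x_1R)\cdots(x_{n-t}R)I_1\cdots I_t\subseteq I$, a product of $n+1$ ideals; strong $n$-absorbingness produces $n$ of them whose product lies in $I$, and the omitted one cannot be $aR$ (otherwise $P\subseteq I$), so $a$ lands in one of the colon ideals on the right. For $(c)\Rightarrow(d)$ the $u$-ring hypothesis enters: $(I:_R P)$ is an ideal which, by $(c)$, equals a \emph{finite} union of the colon ideals on the right, hence is contained in one of them by the defining property of a $u$-ring; together with the reverse containment noted above this is exactly $(d)$. For $(d)\Rightarrow(b)$ apply $(d)$ with $t=0$: if $x_1\cdots x_{n+1}\in I$, then either $x_1\cdots x_n\in I$ and we are done, or $x_1\cdots x_n\notin I$, and then $(I:_R x_1\cdots x_n)=(I:_R x_1\cdots\widehat{x_i}\cdots x_n)$ for some $i$, so from $x_{n+1}\in(I:_R x_1\cdots x_n)$ we get $x_1\cdots\widehat{x_i}\cdots x_n x_{n+1}\in I$.

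\emph{The main step $(b)\Rightarrow(a)$.} Given ideals $J_1,\dots,J_{n+1}$ with $J_1\cdots J_{n+1}\subseteq I$, I would induct on the number of the $J_i$ that fail to be principal (with $n$ fixed). If all are principal, say $J_i=a_iR$, then $a_1\cdots a_{n+1}\in I$ and $n$-absorbingness of $I$ gives $n$ of the $a_i$ with product in $I$, hence $n$ of the $J_i$ with product in $I$. Otherwise, say $J_1$ is not principal. For each $a\in J_1$ the product $(aR)J_2\cdots J_{n+1}$ lies in $I$ and involves strictly fewer non-principal factors, so the induction hypothesis applies: either $J_2\cdots J_{n+1}\subseteq I$, and we are finished, or $a\in(I:_R J_2\cdots\widehat{J_{j}}\cdots J_{n+1})$ for some $2\le j\le n+1$ (with $j$ possibly depending on $a$). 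If the first alternative never occurs, then the ideal $J_1$ is contained in the union of the $n$ ideals $(I:_R J_2\cdots\widehat{J_j}\cdots J_{n+1})$, $2\le j\le n+1$; since $R$ is a $u$-ring, $J_1\subseteq(I:_R J_2\cdots\widehat{J_{j_0}}\cdots J_{n+1})$ for a single $j_0$, i.e.\ $J_1J_2\cdots\widehat{J_{j_0}}\cdots J_{n+1}\subseteq I$.

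I expect this last step to be the crux. The point is that the $u$-ring axiom is exactly what promotes the pointwise information ``each generator of $J_1$ lies in one of finitely many colon ideals'' to the ideal-level containment that is actually needed, and one must be careful to run the induction on the count of non-principal factors (not on $n$) so that the reduction ``replace $J_1$ by $aR$'' strictly decreases the induction parameter. This is the general-$n$ analogue of Badawi's argument for $2$-absorbing ideals, and the rest of the cycle is bookkeeping with colon ideals.
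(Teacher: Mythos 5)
Your proof is correct, but it is organized differently from the paper's. You prove the hard implication (b)$\Rightarrow$(a) head-on, by induction on the number of non-principal ideals among $J_1,\dots,J_{n+1}$, invoking the $u$-ring property once per inductive step to upgrade ``every generator $a$ of $J_1$ lies in one of the $n$ colon ideals $(I:_R J_2\cdots\widehat{J_j}\cdots J_{n+1})$'' to a single containment; conditions (c) and (d) then fall out of (a) and the finite-union property, and (d) with $t=0$ returns (b). The paper runs the cycle the other way: its substantive step is (b)$\Rightarrow$(c), an induction on the number $t$ of ideal factors inside the colon that establishes the union-of-colons identity (and whose inductive step already invokes the single-colon form of level $t-1$, so the $u$-ring hypothesis is used inside that induction as well), after which (c)$\Rightarrow$(d) is the finite-union property and (d), specialized to $t=n$ ideal factors, yields strong $n$-absorbingness. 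The underlying mechanism --- trading ideal factors for elements one at a time and using that an ideal covered by finitely many ideals lies in one of them --- is the same in both arguments, so neither buys extra generality; your arrangement is the direct generalization of Badawi's $2$-absorbing argument and keeps the colon-ideal bookkeeping out of the main induction, while the paper's arrangement makes the colon characterizations (c) and (d) the centerpiece from which (a) is read off.
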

\begin{proof}
(a)$\Rightarrow$(b) It is clear.\\
(b)$\Rightarrow$(c) We use induction on $t$. For $t=0$, consider elements $x_{1},\dots,x_{n}\in R$ such that $x_{1}\cdots x_{n}\notin I$. We show that $$(I:_{R}x_{1}\cdots x_{n})=\cup_{i=1}^{n}(I:_{R}x_{1}\cdots\widehat{x_{i}}\cdots x_{n}).$$ Let $a\in(I:_{R}x_{1}\cdots x_{n})$, so $x_{1}\cdots x_{n}a\in I$. Since $x_{1}\cdots x_{n}\notin I$, then for some $1\leq i\leq n$ we have $x_{1}\cdots\widehat{x_{i}}\cdots x_{n}a\in I$, i.e., $a\in(I:_{R}x_{1}\cdots\widehat{x_{i}}\cdots x_{n})$. Therefore $$(I:_{R}x_{1}\cdots x_{n})\subseteq\cup_{i=1}^{n}(I:_{R}x_{1}\cdots\widehat{x_{i}}\cdots x_{n}).$$ 
Now suppose $t>0$ and assume that for integer $t-1$ the claim holds. Let $x_{1},\dots,x_{n-t}$ be elements of $R$ and let $I_{1},\dots,I_{t}$ be ideals of $R$ such that $x_{1}\cdots x_{n-t}I_{1}\cdots I_{t}\nsubseteq I$. Consider element $a\in(I:_{R}x_{1}\cdots x_{n-t}I_{1}\cdots I_{t})$. Thus $I_{t}\subseteq(I:_{R}x_{1}\cdots x_{n-t}aI_{1}\cdots I_{t-1})$. By hypothesis $(I:_{R}x_{1}\cdots x_{n-t}aI_{1}\cdots I_{t-1})\newline=(I:_{R}x_{1}\cdots\widehat{x_{i}}\cdots x_{n-t}aI_{1}\cdots I_{t-1})$ for some $1\leq i\leq n-t$ or 
$(I:_{R}x_{1}\cdots x_{n-t}aI_{1}\cdots I_{t-1})=(I:_{R}x_{1}\cdots x_{n-t}aI_{1}\cdots\widehat{I_{j}}\cdots I_{t-1})$ for some $1\leq j\leq t-1$. Consequently either $a\in(I:_{R}x_{1}\cdots\widehat{x_{i}}\cdots x_{n-t}I_{1}\cdots I_{t-1}I_{t})$ for some $1\leq i\leq n-t$ or $a\in(I:_{R}x_{1}\cdots x_{n-t}I_{1}\cdots\widehat{I_{j}}\cdots I_{t-1}I_{t})$ for some $1\leq j\leq t-1$. Hence 
\begin{eqnarray*}
(I:_{R}x_{1}\cdots x_{n-t}I_{1}\cdots I_{t})&=&[\cup_{i=1}^{n}(I:_{R}x_{1}\cdots\widehat{x_{i}}\cdots x_{n-t}I_{1}\cdots I_{t})]\\
&\cup&[\cup_{i=1}^{n}(I:_{R}x_{1}\cdots x_{n-t}I_{1}\cdots\widehat{I_{i}}\cdots I_{t})].
\end{eqnarray*}
(c)$\Rightarrow$(d) Since $R$ is a $u$-ring, we are done.\\
(d)$\Rightarrow$(a) In special case of part (d), for every ideals $I_{1},\dots,I_{n}$ of $R$ such that $I_{1}\cdots I_{n}\nsubseteq I$ we have $$(I:_{R}I_{1}\cdots I_{n})=(I:_{R}I_{1}\cdots\widehat{I_{i}}\cdots I_{n})$$ for some $1\leq i\leq n$. Now, easily we can see that $I$ is strongly $n$-absorbing.
\end{proof}

\begin{remark}
Note that in Theorem \ref{main}, for the case $n=2$ we can omit the condition $u$-ring, 
by the fact that if an ideal (a subgroup) is the union of two ideals (two
subgroups), then it is equal to one of them.
\end{remark}

In the next theorem we investigate weakly $n$-absorbing ideals over $u$-rings.
Notice that any B$\acute{\rm e}$zout ring is a $u$-ring, \cite[Corollary 1.2]{Q}.
\begin{theorem}\label{main}
Let $I$ be a proper ideal of a $u$-ring $R$. Then the following conditions are equivalent:\\
(a) $I$ is strongly weakly $n$-absorbing;\\
(b) $I$ is weakly $n$-absorbing;\\
(c) For every $t$ ideals $I_{1},\dots,I_{t}$, $0\leq t\leq n$, and for every elements $x_{1},\dots,x_{n-t}\in R$ such that  $x_{1}\cdots x_{n-t}I_{1}\cdots I_{t}\nsubseteq I$, 
\begin{eqnarray*}
(I:_{R}x_{1}\cdots x_{n-t}I_{1}\cdots I_{t})&=&[\cup_{i=1}^{n}(I:_{R}x_{1}\cdots\widehat{x_{i}}\cdots x_{n-t}I_{1}\cdots I_{t})]\\
&\cup&[\cup_{i=1}^{n}(I:_{R}x_{1}\cdots x_{n-t}I_{1}\cdots\widehat{I_{i}}\cdots I_{t})]\\
&\cup&(0:_{R}x_{1}\cdots x_{n-t}I_{1}\cdots I_{t});
\end{eqnarray*}
(d) For every $t$ ideals $I_{1},\dots,I_{t}$, $0\leq t\leq n$, and for every elements $x_{1},\dots,x_{n-t}\in R$ such that $x_{1}\cdots x_{n-t}I_{1}\cdots I_{t}\nsubseteq I$, $$(I:_{R}x_{1}\cdots x_{n-t}I_{1}\cdots I_{t})=(I:_{R}x_{1}\cdots\widehat{x_{i}}\cdots x_{n-t}I_{1}\cdots I_{t})$$ for some $1\leq i\leq n-t$
or $$(I:_{R}x_{1}\cdots x_{n-t}I_{1}\cdots I_{t})=(I:_{R}x_{1}\cdots x_{n-t}I_{1}\cdots\widehat{I_{j}}\cdots I_{t})$$ for some $1\leq j\leq t$ or $$(I:_{R}x_{1}\cdots x_{n-t}I_{1}\cdots I_{t})=(0:_{R}x_{1}\cdots x_{n-t}I_{1}\cdots I_{t}).$$
\end{theorem}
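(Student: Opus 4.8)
The plan is to run the cycle $(a)\Rightarrow(b)\Rightarrow(c)\Rightarrow(d)\Rightarrow(a)$, imitating the proof of its $n$-absorbing counterpart above, but dragging the extra term $(0:_{R}x_{1}\cdots x_{n-t}I_{1}\cdots I_{t})$ through every colon identity; this term is exactly what encodes the clause ``$\neq 0$'' in the definition of a weakly $n$-absorbing ideal.

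The implication $(a)\Rightarrow(b)$ is immediate. For $(b)\Rightarrow(c)$ I would induct on $t$. When $t=0$, given $x_{1},\dots,x_{n}\in R$ with $x_{1}\cdots x_{n}\notin I$, the inclusion $\supseteq$ is routine (multiplying an element of any of the listed colons by the omitted factor returns it to $I$, and $(0:_{R}x_{1}\cdots x_{n})\subseteq(I:_{R}x_{1}\cdots x_{n})$ since $0\in I$), while for $\subseteq$ one picks $a\in(I:_{R}x_{1}\cdots x_{n})$ and splits into $x_{1}\cdots x_{n}a=0$, which puts $a$ in $(0:_{R}x_{1}\cdots x_{n})$, and $0\neq x_{1}\cdots x_{n}a\in I$, where weak $n$-absorption applied to the $n+1$ elements $x_{1},\dots,x_{n},a$ together with $x_{1}\cdots x_{n}\notin I$ forces $x_{1}\cdots\widehat{x_{i}}\cdots x_{n}a\in I$ for some $i$. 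For the step from $t-1$ to $t$, I would take $a\in(I:_{R}x_{1}\cdots x_{n-t}I_{1}\cdots I_{t})$, rewrite this as $I_{t}\subseteq(I:_{R}x_{1}\cdots x_{n-t}aI_{1}\cdots I_{t-1})$, and distinguish two cases. If $x_{1}\cdots x_{n-t}aI_{1}\cdots I_{t-1}\subseteq I$, then $a$ already lies in the ``omit $I_{t}$'' colon on the right-hand side for parameter $t$. Otherwise, the induction hypothesis for $t-1$ applied with $a$ adjoined to the list of elements, together with the $u$-ring property (an ideal contained in a finite union of ideals lies in one of them), shows that $(I:_{R}x_{1}\cdots x_{n-t}aI_{1}\cdots I_{t-1})$ coincides with one of its summands; since $I_{t}$ is contained in that colon it is contained in that summand, and absorbing $I_{t}$ back into the product exhibits $a$ in the matching summand on the right-hand side for $t$. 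The ``omit $a$'' summand cannot arise here, since it would give $x_{1}\cdots x_{n-t}I_{1}\cdots I_{t}\subseteq I$, contrary to hypothesis.

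For $(c)\Rightarrow(d)$ I would observe that the left-hand side of the identity in (c) is an ideal equal to a finite union of ideals, so the $u$-ring hypothesis forces it to equal one of them. For $(d)\Rightarrow(a)$, given ideals $I_{1},\dots,I_{n+1}$ with $0\neq I_{1}\cdots I_{n+1}\subseteq I$: if $I_{1}\cdots I_{n}\subseteq I$ we are done, and otherwise apply (d) with $t=n$ and no elements. Then $(I:_{R}I_{1}\cdots I_{n})$ is either $(I:_{R}I_{1}\cdots\widehat{I_{j}}\cdots I_{n})$ for some $j$, in which case $I_{n+1}\subseteq(I:_{R}I_{1}\cdots I_{n})$ yields $I_{1}\cdots\widehat{I_{j}}\cdots I_{n}I_{n+1}\subseteq I$, a product of $n$ of the ideals lying in $I$; or it equals $(0:_{R}I_{1}\cdots I_{n})$, which would force $I_{1}\cdots I_{n+1}=0$, contradicting the hypothesis. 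Hence $I$ is strongly weakly $n$-absorbing.

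The bookkeeping in the inductive step of $(b)\Rightarrow(c)$ is the only part I expect to be delicate: one has to keep track of which factor is promoted to the ``element'' list, treat the degenerate case in which the shortened product already sits inside $I$, and check that the $(0:_{R}\cdots)$ term propagates correctly from stage $t-1$ to stage $t$. Everything else is formal. Unlike the $n$-absorbing case, the unions occurring in (c) have at least three summands even when $n=2$, so the elementary ``a subgroup that is a union of two subgroups is one of them'' shortcut does not apply and the $u$-ring hypothesis genuinely seems to be needed.
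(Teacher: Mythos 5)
Your proposal is correct and follows essentially the same route as the paper: the cycle $(a)\Rightarrow(b)\Rightarrow(c)\Rightarrow(d)\Rightarrow(a)$, induction on $t$ for $(b)\Rightarrow(c)$ with the annihilator term carried along, the $u$-ring property for $(c)\Rightarrow(d)$ and inside the inductive step, and the same specialization $t=n$ for $(d)\Rightarrow(a)$. In fact you are slightly more careful than the paper, which glosses over the degenerate cases you address explicitly (the shortened product already lying in $I$, and the ``omit $a$'' alternative contradicting $x_{1}\cdots x_{n-t}I_{1}\cdots I_{t}\nsubseteq I$).
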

\begin{proof}
(a)$\Rightarrow$(b) It is clear.\\
(b)$\Rightarrow$(c) We use induction on $t$. For $t=0$, consider elements $x_{1},\dots,x_{n}\in R$ such that $x_{1}\cdots x_{n}\notin I$. We show that  $$(I:_{R}x_{1}\cdots x_{n})=\cup_{i=1}^{n}(I:_{R}x_{1}\cdots\widehat{x_{i}}\cdots x_{n})\cup(0:_{R}x_{1}\cdots x_{n}).$$ Let $a\in(I:_{R}x_{1}\cdots x_{n})$, so $x_{1}\cdots x_{n}a\in I$. Assume that $x_{1}\cdots x_{n}a\neq0$. Since $x_{1}\cdots x_{n}\notin I$, then for some $1\leq i\leq n$ we have $x_{1}\cdots\widehat{x_{i}}\cdots x_{n}a\in I$, i.e., $a\in(I:_{R}x_{1}\cdots\widehat{x_{i}}\cdots x_{n})$. Consequently $$(I:_{R}x_{1}\cdots x_{n})=[\cup_{i=1}^{n}(I:_{R}x_{1}\cdots\widehat{x_{i}}\cdots x_{n})]\cup(0:_{R}x_{1}\cdots x_{n}).$$ 
Now suppose $t>0$ and assume that for integer $t-1$ the claim holds. Let $x_{1},\dots,x_{n-t}$ be elements of $R$ and let $I_{1},\dots,I_{t}$ be ideals of $R$ such that $x_{1}\cdots x_{n-t}I_{1}\cdots I_{t}\nsubseteq I$. Consider element $a\in(I:_{R}x_{1}\cdots x_{n-t}I_{1}\cdots I_{t})$. Thus $I_{t}\subseteq(I:_{R}x_{1}\cdots x_{n-t}aI_{1}\cdots I_{t-1})$. By hypothesis $(I:_{R}x_{1}\cdots x_{n-t}aI_{1}\cdots I_{t-1})\newline=(I:_{R}x_{1}\cdots\widehat{x_{i}}\cdots x_{n-t}aI_{1}\cdots I_{t-1})$ for some $1\leq i\leq n-t$ or 
$(I:_{R}x_{1}\cdots x_{n-t}aI_{1}\cdots I_{t-1})=(I:_{R}x_{1}\cdots x_{n-t}aI_{1}\cdots\widehat{I_{j}}\cdots I_{t-1})$ for some $1\leq j\leq t-1$ or
$(I:_{R}x_{1}\cdots x_{n-t}aI_{1}\cdots I_{t-1})=(0:_{R}x_{1}\cdots x_{n-t}aI_{1}\cdots I_{t-1})$. Consequently either $a\in(I:_{R}x_{1}\cdots\widehat{x_{i}}\cdots x_{n-t}I_{1}\cdots I_{t-1}I_{t})$ for some $1\leq i\leq n-t$ or $a\in(I:_{R}x_{1}\cdots x_{n-t}I_{1}\cdots\widehat{I_{j}}\cdots I_{t-1}I_{t})$ for some $1\leq j\leq t-1$ or $a\in(0:_{R}x_{1}\cdots x_{n-t}I_{1}\cdots I_{t})$.
Hence 
\begin{eqnarray*}
(I:_{R}x_{1}\cdots x_{n-t}I_{1}\cdots I_{t})&=&[\cup_{i=1}^{n}(I:_{R}x_{1}\cdots\widehat{x_{i}}\cdots x_{n-t}I_{1}\cdots I_{t})]\\
&\cup&[\cup_{i=1}^{n}(I:_{R}x_{1}\cdots x_{n-t}I_{1}\cdots\widehat{I_{i}}\cdots I_{t})]\\
&\cup&(0:_{R}x_{1}\cdots x_{n-t}I_{1}\cdots I_{t}).
\end{eqnarray*}
(c)$\Rightarrow$(d) Since $R$ is a $u$-ring, we are done.\\
(d)$\Rightarrow$(a) In special case of part (d), for every ideals $I_{1},\dots,I_{n}$ of $R$ such that $I_{1}\cdots I_{n}\nsubseteq I$ we have $(I:_{R}I_{1}\cdots I_{n})=(I:_{R}I_{1}\cdots\widehat{I_{i}}\cdots I_{n})$ for some $1\leq i\leq n$ or
$(I:_{R}I_{1}\cdots I_{n})=(0:_{R}I_{1}\cdots I_{n})$. Now, easily we can see that $I$ is strongly weakly $n$-absorbing.
\end{proof}


\end{document}